\documentclass[a4paper]{amsart}
\usepackage{amsmath,amssymb,amsthm,xcolor,graphicx}
\usepackage[allcolors=purple,citecolor=violet,colorlinks=true]{hyperref}
\usepackage[margin=3.2cm]{geometry}


\newtheorem{theorem}{{Theorem}}[section]
\newtheorem{proposition}[theorem]{{Proposition}}
\newtheorem{definition}[theorem]{{Definition}}
\newtheorem{lemma}[theorem]{{Lemma}}
\newtheorem{corollary}[theorem]{{Corollary}}
\newtheorem{remark}[theorem]{{Remark}}
\newtheorem{conjecture}[theorem]{{Conjecture}}
\newtheorem{example}[theorem]{{Example}}

\newtheorem*{Lconjecture}{{Conjecture \ref{conj:Lich2}}}

\newcommand{\N}{\mathbb{N}}
\newcommand{\Z}{\mathbb{Z}}

\newcommand{\R}{\mathbb{R}}
\newcommand{\C}{\mathbb{C}}
\newcommand{\bS}{\mathbb{S}}
\newcommand{\Id}{\operatorname{Id}}
\newcommand{\Eins}{\mathsf{Eins}}
\newcommand{\hEins}{\widehat{\mathsf{Eins}}{}}
\newcommand{\SO}{\mathsf{SO}}
\newcommand{\OG}{\mathsf{O}}
\newcommand{\hOG}{\widehat{\mathsf{O}}}
\newcommand{\GL}{\mathsf{GL}}
\newcommand{\Mat}{\mathsf{Mat}}
\newcommand{\Isom}{\mathrm{Isom}}
\newcommand{\Conf}{\mathrm{Conf}}

\newcommand{\Diff}{\mathrm{Diff}}
\newcommand{\Aut}{\mathrm{Aut}}
\newcommand{\CB}{\mathcal{C}}
\newcommand{\pr}{\pi_*}

\setcounter{tocdepth}{1}

\begin{document}

%
\title[Conformal transformations of spacetimes]{Conformal transformations of spacetimes without observer horizons}
\date{\today}

\author{Leonardo Garc\'ia-Heveling}
\address{Fachbereich Mathematik, Universit\"at Hamburg, Germany}
\email{\href{mailto:leonardo.garcia@uni-hamburg.de}{leonardo.garcia@uni-hamburg.de}}
\urladdr{\url{https://leogarciaheveling.github.io/}}
\author{Abdelghani Zeghib}
\address{UMPA, CNRS, \'Ecole Normale Sup\'erieure de Lyon, France}
\email{\href{mailto:abdelghani.zeghib@ens-lyon.fr}{abdelghani.zeghib@ens-lyon.fr}}
\urladdr{\url{http://www.umpa.ens-lyon.fr/~zeghib/}}

\thanks{We thank Charles Frances for discussions about conformal transformations of Minkowski spacetime and the Einstein universe.}
\subjclass[2010]{53C50 (primary), 58D19 (secondary)}
\keywords{Conformal group, Einstein universe, Lichnerowicz conjecture, causal boundary}

%
%

%
%

%
%

\begin{abstract}
 We prove that for a certain class of Lorentzian manifolds, namely causal spacetimes without observer horizons, conformal transformations can be classified into two types: escaping and non-escaping. This means that successive powers of a given conformal transformation will either send all points to infinity, or none. As an application, we classify the conformal transformations of Einstein's static universe. We also study the question of essentiality in this context, i.e.\ which conformal transformations are isometric for some metric in the conformal class.
\end{abstract}

\maketitle

\setcounter{tocdepth}{1}

\tableofcontents

\section{Introduction}

In Lorentzian geometry and general relativity, the causal order on spacetime $(M,g)$ plays a fundamental role. Two points $p,q \in M$ are called causally related if there exists a future-directed causal curve joining them. That is, a curve with tangent vector in the future lightcone, and thus a possible trajectory for a particle. By a celebrated result of Hawking and Malament (see Theorem~\ref{thm:Haw-Mal} below), the bijections $\phi \colon M \to M$ preserving the causal order are the conformal transformations of $M$, i.e.\ diffeomorphisms such that $\phi^* g = \Omega g$ for some positive function $\Omega$. In the present paper, we exploit this fact to gain knowledge of the conformal group $\Conf^\uparrow(M,g)$, an object of importance more broadly in differential geometry. The superscript $\uparrow$ indicates that we restrict to \emph{time-orientation preserving} transformations (meaning that the future cone is mapped to the future cone).

In \cite{GaZe}, we followed a similar approach to study the group of time-orientation preserving isometries, and found that for causal spacetimes without observer horizons, it splits as a semi-direct product
\begin{equation} \label{eq:isomsplit}
 \Isom^\uparrow(M,g) = L \ltimes N,
\end{equation}
where $N$ is a subgroup that leaves a time function invariant, while $L$ acts by time-translations. The condition of ``no observer horizons'' means that the past and future sets $I^\pm(\gamma)$ of every inextendible causal curve $\gamma$ equal the whole spacetime $M$. It implies that the spacetime has a compact Cauchy surface. The proof of \eqref{eq:isomsplit} relies on two fundmental tools: preservation of the causal relation by isometries, and properness of the action on the orthonormal frame bundle. For conformal transformations, the first tool is still available, but the second one is not. We obtain the following result, which tells us that conformal transformations can still be classified in two types, similarly to \eqref{eq:isomsplit}, but this classification does not respect the group structure (i.e.\ does not lead to a semi-direct product splitting). We state it here in simplified form; see Theorem~\ref{thm:escap} for additional details.

\begin{theorem} \label{thm:intro1}
 Let $(M,g)$ be a causal spacetime satisfying the no observer horizons condition, and let $\phi \colon M \to M$ be a time-orientation preserving conformal transformation. Then, there are three mutually exclusive possibilities:
 \begin{enumerate}
  \item[(i$^+$)] For every point $p \in M$ and every Cauchy time function $\tau$,
  \begin{equation*}
   \tau(\phi^k(p)) \to +\infty \text{ as }k \to \infty.
  \end{equation*}
  \item[(i$^-$)] For every point $p \in M$ and every Cauchy time function $\tau$,
  \begin{equation*}
   \tau(\phi^k(p)) \to -\infty \text{ as }k \to \infty.
  \end{equation*}
  \item[(ii)] For every point $p \in M$, the sequence $\left(\phi^k(p)\right)_{k \in \N}$ stays inside a compact set.
 \end{enumerate}
 In cases (i$^+$) and (i$^-$), the action of $\phi$ is free, proper, and cocompact.
\end{theorem}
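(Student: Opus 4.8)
The plan is to fix a Cauchy time function $\tau\colon M\to\R$ whose level sets are compact Cauchy surfaces (these exist because the no observer horizons condition forces a compact Cauchy surface, hence global hyperbolicity), and to translate the trichotomy into statements about the sequence $\tau(\phi^k(p))$. Throughout I would lean on three facts: that $\phi$ and $\phi^{-1}$ preserve the chronological relation $\ll$ and the time orientation, by the Hawking--Malament theorem (Theorem~\ref{thm:Haw-Mal}); that along any future-inextendible causal curve one has $\tau\to+\infty$, since such a curve meets every Cauchy surface to its future; and the hypothesis itself in the form $I^+(\gamma)=I^-(\gamma)=M$ for every inextendible causal curve $\gamma$, together with its pointwise consequence that each $I^+(p)$ (resp.\ $I^-(p)$) contains an entire upper slab $\tau^{-1}((T,\infty))$ (resp.\ lower slab $\tau^{-1}((-\infty,T))$). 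The slabs $\tau^{-1}([a,b])$ are compact, which is what makes ``bounded orbit'' equivalent to ``bounded $\tau$''.

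The core is a zero--one law. Call the dynamics \emph{future type} if there exist $p\in M$ and $N\geq 1$ with $p\ll\phi^N(p)$. Writing $\psi=\phi^N$, the orbit $(\psi^m(p))_{m\in\Z}$ is then a bi-infinite timelike chain, since applying powers of $\psi$ to $p\ll\psi(p)$ gives $\psi^m(p)\ll\psi^{m+1}(p)$ for all $m$. I would first argue this chain is cofinal to $\tau=+\infty$ in the future and to $\tau=-\infty$ in the past, so it spans an inextendible causal curve $\gamma$. Then no observer horizons applies: for arbitrary $q\in M$, $I^+(\gamma)=M$ yields an orbit point with $\psi^m(p)\ll q$, while $I^-(\gamma)=M$ yields one with $q\ll\psi^{m'}(p)$; applying $\psi^l$ and using monotonicity of $\tau$ forces $\tau(\phi^{k}(q))\to+\infty$ along every residue mod $N$, i.e.\ case (i$^+$) for all points. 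The past-type case is symmetric and gives (i$^-$), and the two are mutually exclusive since they make the same orbits tend to opposite infinities. If the dynamics is neither future nor past type, the slab consequence closes the argument: were some orbit to attain arbitrarily large $\tau$, a high iterate would land in an upper slab contained in $I^+(p)$, producing a relation $p\ll\phi^N(p)$ and contradicting non-future-type; likewise at $-\infty$. Hence every orbit stays in a compact slab, which is case (ii).

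The step I expect to be the main obstacle is upgrading ``the chain has strictly increasing $\tau$'' to ``$\tau\to+\infty$'': a priori the forward chain could converge to an interior fixed point of $\psi$, destroying cofinality. Ruling this out is exactly where I would again invoke no observer horizons, through the slab property and the $\phi$-invariance of $I^\pm$ of a putative fixed point; it is also the source of freeness in the escaping cases, since a periodic point would give a bounded orbit incompatible with (i$^\pm$). Once escaping is genuine, properness and cocompactness follow from uniform versions of the escape: properness amounts to $\min_{x\in K}\tau(\phi^k(x))\to+\infty$ for every compact $K$, and cocompactness to the existence of a single compact slab $W$ with $\bigcup_k\phi^k(W)=M$, the latter requiring a uniform bound on the time displacement $\tau\circ\phi-\tau$. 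Both uniform estimates should come from compactness of the Cauchy surfaces together with the no observer horizons condition, and together they form the technical heart of the detailed statement in Theorem~\ref{thm:escap}.
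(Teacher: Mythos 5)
Your proposal contains a genuine, fatal gap, and it sits exactly at the step you flagged as the ``main obstacle.'' The zero--one law is false as you formulated it: the existential condition ``there exist $p$ and $N$ with $p \ll \phi^N(p)$'' does \emph{not} imply escaping, and no invocation of the no observer horizons condition can rule out convergence of the chain $\psi^m(p)$ to a fixed point, because this behavior actually occurs in an NOH spacetime. The counterexample is in the paper itself (Example~\ref{ex:notsubgroup}, built on Proposition~\ref{prop:fixed}): take $\phi \in \hOG^\uparrow(2,n+1)$ fixing the tips $p_0, p_2$ of a diamond $I(p_0,p_2) \subset \hEins^{1,n}$ and acting on that diamond as a time translation of Minkowski space. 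Every point $q$ of the open diamond satisfies $q \ll \phi(q)$ (a time translation pushes every point of $\R^{1,n}$ into its chronological future), yet $\phi^m(q)$ converges to the fixed tip $p_2$, and every orbit stays in the compact closure of its diamond, so $\phi$ is non-escaping. Note also why your fallback idea (invariance of $I^\pm$ of the putative fixed point, plus the slab property) cannot produce a contradiction: the analogous step in the paper's proof of Theorem~\ref{thm:escap} uses the achronal set $H^+(p_\infty) = J^+(p_\infty)\setminus I^+(p_\infty)$ and gets a contradiction by applying the hypothesis $q \ll \phi^J(q)$ \emph{for all} $q$ to a point of that achronal set; with only an existential chronological pair there is nothing to contradict, as the example shows.

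The paper's dichotomy is therefore drawn along a different line: not ``does some point move chronologically forward,'' but ``is some orbit unbounded.'' From an orbit leaving every compact slab, Lemma~\ref{lem:NOHCauchy} (pairwise totally timelike connected level sets) yields a power $\psi = \phi^N$ that moves an entire compact Cauchy surface strictly into its own chronological future: $\Sigma_0$ and $\psi(\Sigma_0)$ are totally timelike connected. This uniform, surface-wide statement is what replaces your pointwise chain, and it is the reason cofinality works there but not for you: the surfaces $\psi^j(\Sigma_0)$ are graphs of functions $F_j$ over the compact $\Sigma_0$, and if they accumulated, total timelike connectedness of consecutive surfaces together with closedness of $\leq$ would force two \emph{distinct} points $(F(x),x)$, $(F(y),y)$ to be causally related in both directions, contradicting causality --- whereas a chain of single points may perfectly well accumulate at a fixed point. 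With cofinality of the surfaces established, $J^+(\Sigma_0)\cap I^-(\psi(\Sigma_0))$ is a fundamental domain with compact closure, which delivers in one stroke freeness, properness, cocompactness, and the universally quantified condition $q \ll \psi^2(q)$ for all $q$ --- i.e.\ precisely the uniform estimates you deferred to the end. Your treatment of the residual case (neither type, via the slab property) and your deduction of (i$^+$) from a genuinely cofinal chain are both fine; but to close the gap you would have to switch to the orbit-unboundedness dichotomy and prove the surface statement, at which point you have reconstructed the paper's proof.
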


We say that $\phi$ is \emph{future-escaping} if (i$^+$) holds, \emph{past-escaping} if (i$^-$) holds, and \emph{non-escaping} if (ii) holds. These notions also pass to the Lie algebra (see Proposition~\ref{prop:conj}). We simply say that $\phi$ is \emph{escaping} if it is future- or past-escaping. The key fact is that the behavior does not depend on the point. This clearly fails on spacetimes with observer horizons, such as the Minkowski and de Sitter spacetimes. There, boosts are examples of conformal transformations which have a fixed point, but send other points to $\pm \infty$ when applied repeatedly.

We then turn to study the Einstein static universe $\hEins^{1,n}$. This is the universal (if $n\geq2$) cover of the compact Einstein universe, which in turn is the conformal compactification of Minkowski spacetime $\R^{1,n}$. The Einstein static universe is isometric to the Lorentzian product $\R \times \bS^n$ (where $\R$ is the time direction and $\bS^n$ the round sphere). Having such a symmetric form, $\hEins^{1,n}$ has a large conformal group $\Conf^\uparrow(\hEins^{1,n}) = \hOG^\uparrow(2,n+1)$, which has been extensively studied (see, for instance, \cite{PeRi,SchCFT,EMN}). To illustrate the power of Theorem~\ref{thm:intro1}, we perform a ``manual'' classification of elements of $\hOG^\uparrow(2,n+1)$, which turns out to be more involved that the ``abstract'' proof of Theorem~\ref{thm:intro1}. This also provides us some more refined information about $\hOG^\uparrow(2,n+1)$.

\begin{theorem} \label{thm:intro2}
 In the group $\hOG^\uparrow(2,n+1)$ of time-orientation preserving conformal transformations of $\hEins^{1,n}$, the following statements hold.
 \begin{enumerate}
  \item All future-escaping elements in $\hOG^0(2,n+1)$ are smoothly conjugate to each other. The same holds for all past-escaping elements.
  \item Any non-escaping element has infinitely many fixed points, or preserves a time function (or both).
 \end{enumerate}
\end{theorem}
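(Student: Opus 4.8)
The plan is to prove the two statements by complementary methods. Both rely on the central extension \(p\colon\hOG^\uparrow(2,n+1)\to\OG^\uparrow(2,n+1)\) whose kernel is generated by the deck transformation \(\delta\) of the universal cover \(\hEins^{1,n}=\R\times\bS^n\to\Eins^{1,n}\); this \(\delta\) is a time translation and is central in \(\hOG^\uparrow(2,n+1)\). The guiding principle, which I would isolate first as a consequence of Theorem~\ref{thm:intro1}, is that escaping is governed by a single number, the \emph{time-winding rate}: since by Theorem~\ref{thm:intro1} the escaping/non-escaping behaviour is independent of the point, an element is escaping as soon as one orbit drifts off in the time coordinate \(t\) of \(\R\times\bS^n\), and non-escaping forces every such drift to vanish.

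For statement~(i), fix a future-escaping \(\phi\in\hOG^0(2,n+1)\). By Theorem~\ref{thm:intro1} its action is free, proper and cocompact. First I would build a smooth \(\phi\)-equivariant time function \(T\colon\hEins^{1,n}\to\R\) with \(T\circ\phi=T+1\): starting from a Cauchy time function and using properness together with future-escaping-ness, one constructs \(T\) on a fundamental slab and extends it equivariantly, with compact spacelike Cauchy level sets \(\Sigma=T^{-1}(0)\cong\bS^n\). The goal is then a \(\phi\)-invariant vector field \(V\) with \(dT(V)\equiv1\), whose flow yields a diffeomorphism \(\R\times\bS^n\xrightarrow{\ \sim\ }\hEins^{1,n}\) conjugating \(\phi\) to the model translation \(\sigma\colon(t,x)\mapsto(t+1,x)\). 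The hypothesis \(\phi\in\hOG^0\) enters exactly here: the obstruction to gluing \(V\) across the seam \(\Sigma\to\phi(\Sigma)\) is the isotopy class of the first-return map of \(\phi\) on \(\Sigma\), and since \(\phi\) lies in the identity component it is isotopic to the identity as a diffeomorphism, so this return map is isotopic to \(\mathrm{id}_{\bS^n}\) and the suspension trivialises. As \(\sigma\) does not depend on \(\phi\), all future-escaping elements are mutually smoothly conjugate; the past-escaping case is identical with \(T\circ\phi=T-1\).

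For statement~(ii), let \(\phi\in\hOG^\uparrow(2,n+1)\) be non-escaping. If \(\phi\) has a fixed point \(\tilde v\in\hEins^{1,n}\), then since \(\delta\) is central it also fixes the entire \(\langle\delta\rangle\)-orbit \(\{\delta^k\tilde v\}_{k\in\Z}\), giving infinitely many fixed points, which is the first alternative. So assume \(\phi\) has no fixed point, and set \(\bar\phi=p(\phi)\) acting on \(\R^{2,n+1}\). I first claim \(\bar\phi\) has no fixed point on \(\Eins^{1,n}\) either: a fixed point \([v]\) would give a \(\phi\)-invariant fibre, a \(\langle\delta\rangle\)-orbit on which \(\phi\) acts by an integer shift \(c\); no fixed point in \(\hEins^{1,n}\) forces \(c\neq0\), but then \(\phi^k\) drifts the time coordinate along that fibre without bound, making \(\phi\) escaping, a contradiction. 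Thus \(\bar\phi\) has no null eigenline. A short analysis of the real Jordan form in \(\OG(2,n+1)\) then shows \(\bar\phi\) is elliptic: any real eigenvalue \(\lambda\neq\pm1\) has a null eigenvector (from \(\langle v,v\rangle=\lambda^2\langle v,v\rangle\)), any nontrivial unipotent block fixes a null vector, and any complex eigenvalue of modulus \(\neq1\) produces a totally isotropic invariant \(2\)-plane \(V\) whose circle of null lines wraps the time circle once, so a nonzero rotation angle on it makes \(\phi\) escaping while a zero angle gives a fixed null line; every case contradicts the previous step. Finally, for the elliptic \(\bar\phi\) the rotation angle in the negative-definite timelike \(2\)-plane must vanish, for otherwise the lift drifts in time and \(\phi\) escapes; hence \(\phi\) acts as \((t,x)\mapsto(t,Ax)\) with \(A\in\OG(n+1)\) and preserves the time function \(t\).

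I expect the genuine difficulty to lie in the soft trivialisation of statement~(i): producing \(T\) and \(V\) smoothly on the noncompact \(\hEins^{1,n}\), and upgrading ``\(\phi\) is isotopic to the identity'' to an honest \(\phi\)-invariant field trivialising the suspension. For statement~(ii) the delicate point is instead the normal-form bookkeeping, namely verifying that every non-elliptic block and every nonzero timelike or isotropic-plane rotation yields a uniformly time-drifting, hence escaping, orbit; once the extension \(p\) and the timelike fibres of \(\hEins^{1,n}\to\Eins^{1,n}\) are in place this is a finite check in \(\OG(2,n+1)\). Should the fixed-point-free branch of~(ii) resist this direct approach, an alternative is to note that a non-escaping element with compact \(\overline{\langle\phi\rangle}\) becomes an isometry after averaging the metric over that closure, whereupon the splitting \(\Isom^\uparrow=L\ltimes N\) of \cite{GaZe} places it in the time-function-preserving factor \(N\).
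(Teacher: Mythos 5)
Your proof of part (i) takes a genuinely different route from the paper's (the paper joins $\phi$ to the standard translation $T_1$ by a path of \emph{escaping conformal} elements and applies Ehresmann's theorem to the resulting family of compact quotients, cf.\ Theorem~\ref{thm:conjugacy}), but it has a gap at its crux. You reduce everything to the claim that the first-return map on the Cauchy sphere $\Sigma\cong\bS^n$ is isotopic to $\mathrm{id}_{\bS^n}$, and justify this as follows: $\phi\in\hOG^0(2,n+1)$, hence $\phi$ is isotopic to the identity \emph{as a diffeomorphism} of $\hEins^{1,n}$, hence the return map is isotopic to the identity. The second implication is unjustified: an ambient isotopy of $\phi$ on the noncompact manifold $\R\times\bS^n$ preserves neither $\Sigma$ nor the fibration of the quotient over the circle, so it induces no isotopy of the return map; what it yields is only that the return map is \emph{homotopic} to the identity, and since $\pi_0\Diff^+(\bS^n)$ is in general nontrivial (for $n\geq 5$ it is the Kervaire--Milnor group of homotopy $(n+1)$-spheres), a homotopically trivial diffeomorphism of $\bS^n$ can suspend to a nontrivial, ``fake'' $\bS^1\times\bS^n$ --- exactly the phenomenon your trivialisation must rule out. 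The gap is fixable, but only by using more than ambient isotopy: take the path $\phi_s$ from $\mathrm{id}$ to $\phi$ \emph{inside} $\hOG^0(2,n+1)$; each $\phi_s$ is conformal, so $\phi_s(\Sigma)$ is a Cauchy surface of $\R\times\bS^n$, hence a graph over $\bS^n$, and projecting along the $\R$ factor turns $s\mapsto\phi_s|_\Sigma$ into an honest isotopy in $\Diff(\bS^n)$ from the identity to the horizontal part of $\phi|_\Sigma$; combined with the flow isotopy of your field $V$, this proves the return map is isotopic to $\mathrm{id}_{\bS^n}$. With that repair your suspension argument is correct, and in fact more economical than the paper's, since it bypasses the decomposition $\phi = f\circ\phi_E$ of Theorem~\ref{thm:classification} that the paper needs to build its path of escaping elements.

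For part (ii), your two soft arguments are correct and cleaner than the paper's: centrality of the deck transformation $\delta$ turns one fixed point into infinitely many (the paper instead uses the chain of conjugate points of Proposition~\ref{prop:fixed}), and your fiber-drift argument (fixed point downstairs plus no fixed point upstairs forces escaping) is the same as the paper's. But the ``finite check'' in $\OG(2,n+1)$ as you list it is not exhaustive, and the omissions are precisely where the paper works hardest (Lemmas~\ref{lem:PHnot1} and~\ref{lem:plane}). First, your step~1 only excludes invariant null \emph{rays} with positive eigenvalue, not null eigen\emph{lines}: a null eigenvector with negative eigenvalue $\lambda\neq -1$ (so $|\lambda|\neq 1$) makes $\pr(\phi)$ swap two antipodal points of $\Eins^{1,n}$ rather than fix one, and this case needs a separate causality argument (the two fibers interleave at time-spacing $\pi$, and preservation of $\leq$ forces $\phi^2$ to shift each fiber by an odd, hence nonzero, number of deck steps, so $\phi$ is escaping). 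Second, non-semisimple elements whose spectrum lies on the unit circle but is not contained in $\{\pm1\}$ --- the elliptic--parabolic mixed case --- fall into none of your three cases: there is no eigenvalue of modulus $\neq 1$, and there need be no unipotent block at eigenvalue $1$; for these one must first produce an invariant isotropic line or $2$-plane (the content of Lemma~\ref{lem:plane}) before your rotation-versus-drift dichotomy can be run. Finally, your fallback is circular: non-escaping does \emph{not} imply that $\overline{\langle\phi\rangle}$ is compact (a hyperbolic element fixing a point of $\hEins^{1,n}$ is non-escaping, since the fixed orbit is bounded, yet it generates a closed non-compact subgroup), and in the fixed-point-free branch compactness of that closure is essentially equivalent to the ellipticity you are trying to establish, so assuming it begs the question.
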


Here $\hOG^0(2,n+1)$ denotes the connected component of the identity, and $\phi,\psi$ are smoothly conjugate if there is a diffeomorphism $\chi$ (not necessarily conformal) such that $\phi = \chi \circ \psi \circ \chi^{-1}$. To prove (i), we show that the quotients $M / \phi$ and $M / \psi$ are diffeomorphic (but not necessarily conformally diffeomorphic). Smooth conjugacy is thus an empty condition on escaping elements. We expect that the situation is the opposite for non-escaping elements: there, differential conjugacy is equivalent to conformal conjugacy. This is the subject of a work in progress \cite{AGZ}.

The last part of the paper deals with the question of essentiality. We say that $\Conf^\uparrow(M,g)$ is essential if there does not exist $\Omega \colon M \to (0,\infty)$ such that $\Conf^\uparrow(M,g) = \Isom^\uparrow(M,\Omega g)$. Similarly, we say that a transformation $\phi \in \Conf^\uparrow(M,g)$ is essential if there does not exist $\Omega$ such that $\phi \in \Isom^\uparrow(M,\Omega g)$. Note that admitting one essential transformation implies having essential conformal group, but not the other way around. Essentiality is well-understood in the Riemannian case, thanks to a conjecture of Lichnerowicz, proven by Ferrand \cite{Fer69,Fer} and Obata \cite{Oba}. The conclusion is that the only Riemannian manifolds with essential conformal group are those conformal to $\bS^n$ or $\R^n$. Here, we propose an analogous conjecture for spacetimes without observer horizons.

\begin{conjecture} \label{conj:Lich2}
 Let $(M,g)$ be a causal spacetime satisfying the NOH. Then $\Conf^\uparrow(M,g)$ is essential if and only if $(M,g)$ is conformally diffeomorphic to the Einstein static universe $\hEins^{1,n}$.
\end{conjecture}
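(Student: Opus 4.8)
The plan is to establish the two implications separately. The forward direction --- that $\hEins^{1,n}$ has essential conformal group --- is soft and can be extracted from the results already in hand, whereas the converse is the substantial part and should follow the strategy of the Riemannian Lichnerowicz theorem: essentiality forces conformal flatness, and conformal flatness together with the no-observer-horizons hypothesis pins down the model.

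For the forward direction I would argue by contradiction using the splitting \eqref{eq:isomsplit}. Suppose there were $\Omega \colon \hEins^{1,n} \to (0,\infty)$ with
\[
 \hOG^\uparrow(2,n+1) = \Conf^\uparrow(\hEins^{1,n}) = \Isom^\uparrow(\hEins^{1,n},\Omega g).
\]
Since causality and the NOH condition depend only on the conformal class, $(\hEins^{1,n},\Omega g)$ is again a causal spacetime without observer horizons, so \eqref{eq:isomsplit} yields $\hOG^\uparrow(2,n+1) = L \ltimes N$ with $N$ preserving a time function and $L$ abelian, acting by time-translations. Now $N \cap \hOG^0(2,n+1)$ is normal in $\hOG^0(2,n+1)$, whose Lie algebra is simple for $n \geq 2$, so it is either discrete or all of $\hOG^0$. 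The first case forces the non-abelian group $\hOG^0$ to inject, modulo a discrete subgroup, into the abelian $L$, which is absurd. The second case says $\hOG^0$ preserves a time function; but $\hOG^0(2,n+1)$ contains escaping elements (Theorem~\ref{thm:intro2}(i)), which by Theorem~\ref{thm:intro1} push every Cauchy time function to $\pm\infty$ and hence preserve none. Either way we reach a contradiction, so $\Conf^\uparrow(\hEins^{1,n})$ is essential.

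For the converse, assume $\Conf^\uparrow(M,g)$ is essential. First I would translate essentiality into a dynamical statement: as in the work of Alekseevskii and Ferrand, essentiality should be equivalent to non-properness of the action of $\Conf^\uparrow(M,g)$ on the conformal frame bundle, producing a ``holonomy sequence'' $\phi_k$ that escapes in the group while $\phi_k(p_k)$ stays bounded. Second, I would feed this sequence into the curvature estimates for the normal conformal Cartan geometry, following Frances: at an accumulation point the holonomy contracts the Cartan curvature, forcing the Weyl and Cotton tensors to vanish there, and then I would propagate this vanishing using the NOH condition --- which guarantees that appropriate orbits are causally dense and that $M$ has a compact Cauchy surface --- to conclude that $(M,g)$ is conformally flat. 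Finally, a conformally flat causal spacetime carries a $(\hOG(2,n+1),\hEins^{1,n})$-structure with developing map $\delta \colon \tilde M \to \hEins^{1,n}$; I would use NOH to show the structure is complete, so that $\delta$ is a covering and hence, $\hEins^{1,n}$ being simply connected for $n \geq 2$, a diffeomorphism. A final check that the causality and NOH conditions rule out nontrivial quotients then yields a conformal diffeomorphism $(M,g) \cong \hEins^{1,n}$.

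The hard part will be the curvature-vanishing step, which is exactly where the Riemannian proofs concentrate their difficulty and which must here be adapted to an indefinite metric. Two features are genuinely new. First, the source--sink picture underlying the Riemannian argument relies on the compactness of a limiting isometry group; in Lorentzian signature the stabilizer directions are non-compact, so the contraction of the Cartan curvature along a holonomy sequence must be analyzed directly rather than inferred from compactness. Second, the propagation from pointwise to global conformal flatness, and then the identification of the \emph{complete} model $\hEins^{1,n}$ rather than a proper causally convex subdomain of it, is where the no-observer-horizons hypothesis does the work that completeness or compactness does in the Riemannian setting; making this interaction between conformal dynamics and global causal structure precise is the crux of the proof.
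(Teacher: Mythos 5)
You are attempting to prove a statement that the paper itself records only as Conjecture~\ref{conj:Lich2} and leaves open: the paper offers partial evidence (Propositions~\ref{prop:inessential} and~\ref{prop:achronalset}, Theorem~\ref{thm:finitepi1}) but no proof, so there is no ``paper proof'' to match yours against. Your forward direction is essentially correct, and is in fact already a consequence of the paper's results: by Proposition~\ref{prop:inessential} essentiality of a closed subgroup is equivalent to non-properness of its action, and $\hOG^\uparrow(2,n+1)$ contains non-escaping, non-elliptic elements (e.g.\ lifts of Minkowski time-translations fixing a point, as in Example~\ref{ex:notsubgroup}), which the paper shows are essential; your alternative argument via the splitting \eqref{eq:isomsplit} also works, modulo the caveat that $\mathfrak{so}(2,n+1)$ is only semisimple, not simple, when $n=1$. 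The problem is the converse, where your two main steps are not gaps to be filled by routine work but are precisely the open problems the paper identifies.

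Step one of your converse --- essentiality forces conformal flatness --- is verbatim Conjecture~\ref{conj:Lich1}, which the paper states is open even in the better-studied compact Lorentzian case (where it is known only under extra hypotheses, by Melnick--Pecastaing and Frances--Melnick). Worse, the unrestricted pseudo-Riemannian statement is \emph{false}: Frances constructed compact counterexamples in every signature $(p,q)$ with $p,q\geq 2$, and non-compact Lorentzian counterexamples exist as well. So ``feed the holonomy sequence into Frances's curvature estimates'' cannot succeed as stated: those very counterexamples show that contraction of the Cartan curvature along a holonomy sequence does not by itself force flatness, and the entire question is what new mechanism the NOH supplies --- your sketch names this as ``the crux'' but provides no argument for it. Step two --- conformally flat plus NOH implies conformal to $\hEins^{1,n}$ --- is likewise not available at the generality you need: the paper proves it only for finite fundamental group (Theorem~\ref{thm:finitepi1}, via Lemma~\ref{lem:NOHcover} and Rossi's classification of C-maximal spacetimes with compact Cauchy surfaces), and explicitly notes that for infinite $\pi_1(M)$, where the universal cover has non-compact Cauchy surfaces, no comparable classification exists (Sma\"i). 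Your assertion that NOH makes the flat conformal structure complete, so that the developing map $\delta\colon \tilde M \to \hEins^{1,n}$ is a covering, is exactly the missing content there: completeness of a $(G,X)$-structure does not follow from global hyperbolicity or any causal condition by a cited or sketched argument, and injectivity of developing maps for conformally flat Lorentzian manifolds is notoriously delicate. In short, your proposal is a reasonable research programme --- it coincides with the reduction the paper itself describes (``if Conjecture~\ref{conj:Lich1} is true, then Conjecture~\ref{conj:Lich2} reduces to classifying certain conformally flat spacetimes'') --- but it is not a proof, and both of its pillars are currently open.
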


See Section~\ref{sec:essentiality} and references therein for previous generalizations of the Lichnerowicz conjecture, including the (still largely open) case of compact Lorentzian manifolds. There we also provide evidence for our conjecture by proving the following.
\begin{itemize}
 \item $\Conf^\uparrow(M,g)$ is essential if and only if it acts non-properly on $M$ (Proposition~\ref{prop:inessential}).
 \item If $\phi \in \Conf^\uparrow(M,g)$ is essential, then it preserves an achronal set (Proposition~\ref{prop:achronalset}).
 \item If $(M,g)$ is conformally flat with finite fundamental group and admits an essential transformation, then $(M,g)$ is conformal to $\hEins^{1,n}$ (Theorem~\ref{thm:finitepi1}).
\end{itemize}
Similar versions of these statements were used in the proof of the Riemannian Lichnerowicz conjecture. We also comment on the possibility of a Lichnerowicz conjecture for orbifolds, which necessarily needs to be more nuanced than for manifolds (see Remark~\ref{rem:orbi}).

\section{Preliminaries}

\subsection{Lorentzian geometry}

Let $(M,g)$ be a spacetime, that is, a time-oriented Lorentzian manifold (sometimes, we will abbreviate it to $M$). We employ the usual notation, where $\ll$ and $\leq$ denote the chronological and causal relations, and $I^\pm( \cdot)$ and $J^\pm(\cdot)$ denote the chronological and causal futures and pasts of points, sets, and curves.

\begin{definition}
 A spacetime $M$ is called
 \begin{enumerate}
  \item \emph{Causal} if $\leq$ is antisymmetric (equivalently, if there are no closed causal curves.).
  \item \emph{Distinguishing} if
  \begin{equation*}
   I^-(p) = I^-(q) \iff p=q \iff I^+(p) = I^+(q).
  \end{equation*}
  \item \emph{Globally hyperbolic} if $M$ is causal and for all $p,q \in M$, $J(p,q) := J^+(p) \cap J^-(q)$ is compact.
 \end{enumerate}
\end{definition}

Recall also that a Cauchy surface is a set $\Sigma \subset M$ that is intersected exactly once by every inextendible causal curve, and $\tau \colon M \to \R$ is called a Cauchy temporal function if its gradient vector $d\tau^\sharp$ is everywhere past-directed timelike (then, each $\tau$-level set is a Cauchy surface). The following classical theorem relates these concepts.

\begin{theorem}[{\cite{Ger,BeSa1}}] \label{thm:GerBeSa}
 Let $(M,g)$ be a spacetime. The following are equivalent:
 \begin{enumerate}
  \item $(M,g)$ is globally hyperbolic.
  \item $(M,g)$ contains a Cauchy surface.
  \item $(M,g)$ admits a Cauchy temporal function.
 \end{enumerate}
 Moreover, any two Cauchy hypersurfaces are homeomorphic, and a choice of Cauchy temporal function $\tau$ defines a diffeomorphism $F \colon \R \times \Sigma \to M$ for $\Sigma = \tau^{-1}(0)$, such that
 \begin{equation*}
  F^* g_{(t,x)} = - \beta(t,x) dt^2 + (h_t)_x,
 \end{equation*}
 where $\beta > 0$ and $h_t$ is a Riemannian metric on $\{t\} \times \Sigma$.
\end{theorem}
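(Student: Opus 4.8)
The plan is to establish the three equivalences through a cycle of implications, separating the purely causal-theoretic content (due to Geroch) from the much harder smoothing problem (due to Bernal and S\'anchez), which is where the real difficulty lies. Concretely, I would prove (iii) $\Rightarrow$ (ii) $\Rightarrow$ (i) $\Rightarrow$ (iii), and obtain the ``moreover'' statement as a byproduct of the construction carried out in the last implication.

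The first two implications are comparatively soft. For (iii) $\Rightarrow$ (ii), if $\tau$ is a Cauchy temporal function then $d\tau^\sharp$ is past-directed timelike, so $\tau$ is strictly monotone along every inextendible causal curve; a level set $\Sigma_c = \tau^{-1}(c)$ is therefore spacelike and met at most once by such a curve, and it is met exactly once since $\tau$ is unbounded along the two ends, making $\Sigma_c$ a Cauchy surface. For (ii) $\Rightarrow$ (i), I would follow Geroch: a closed causal curve would be inextendible and would meet a Cauchy surface $\Sigma$ infinitely often, contradicting the defining property, so $M$ is causal; compactness of each diamond $J(p,q)$ then follows from a limit-curve argument, pushing any escaping sequence in $J(p,q)$ onto an inextendible causal curve that crosses $\Sigma$ the wrong number of times.

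The heart of the matter is (i) $\Rightarrow$ (iii). Geroch's volume-function construction already yields a \emph{continuous} Cauchy time function: fixing a probability measure $m$ on $M$ absolutely continuous with respect to the volume form, the maps $p \mapsto m(I^-(p))$ and $p \mapsto m(I^+(p))$ are continuous under global hyperbolicity, and $\tau = \log\bigl(m(I^-(\cdot))/m(I^+(\cdot))\bigr)$ is a continuous time function whose level sets are topological Cauchy surfaces, giving a homeomorphism $M \cong \R \times \Sigma$. The obstacle, which I expect to be by far the most delicate step, is that this $\tau$ is neither smooth nor has timelike gradient, and naive mollification destroys the temporal property since the gradient of a smoothed function can fail to be timelike near the light cone. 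This is precisely the point that remained open for decades after Geroch's topological result and was resolved by Bernal and S\'anchez. I would invoke their technique of constructing smooth temporal functions locally, with quantitative control on the timelike character of the gradient, and patching them via a partition of unity adapted to the causal structure in a way that preserves timelikeness; iterating this produces a genuine Cauchy temporal function $\tau$. The ``moreover'' statement then follows by using the normalized gradient flow of $\tau$ to identify $M$ with $\R \times \Sigma$: since $d\tau$ is timelike its level sets are spacelike, and adapting the spatial coordinates to the flow lines of $d\tau^\sharp$ eliminates the cross-terms, leaving the orthogonal form $-\beta\, dt^2 + h_t$.
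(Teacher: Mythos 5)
First, a point of comparison: the paper does not prove this theorem at all --- it is quoted as a classical result of Geroch \cite{Ger} and Bernal--S\'anchez \cite{BeSa1} --- so there is no internal proof to measure your proposal against. What you wrote is essentially a faithful roadmap of those two references: Geroch's volume-function construction $\tau = \log\bigl(m(I^-(\cdot))/m(I^+(\cdot))\bigr)$ for the continuous time function and the topological splitting, Bernal--S\'anchez's smoothing for the genuinely hard step, and the gradient flow of $\tau$ for the orthogonal form $-\beta\,dt^2 + h_t$. Invoking the smoothing technique rather than reproving it is consistent with how the paper itself treats the result, and your identification of that step as the real difficulty is accurate.

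There is, however, one genuine flaw, in the implication you present as soft. In (iii) $\Rightarrow$ (ii) you argue that a level set of $\tau$ is met \emph{exactly} once ``since $\tau$ is unbounded along the two ends'' of every inextendible causal curve. That unboundedness is not a consequence of $d\tau^\sharp$ being past-directed timelike. Counterexample: on $M = \R^{1,n}\setminus\{0\}$ the coordinate function $t$ has past-directed timelike gradient, yet $t$ is bounded above along the inextendible causal curve $s \mapsto (s,0,\dots,0)$, $s<0$, which terminates at the deleted point; the level set $\{t=1\}$ is not a Cauchy surface, and indeed this $M$ admits no Cauchy surface whatsoever, so with your reading of the hypothesis the implication (iii) $\Rightarrow$ (ii) is simply false. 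Timelikeness of the gradient gives strict monotonicity of $\tau$ along causal curves but no control on its range. The resolution is definitional: ``Cauchy temporal function'' (in Bernal--S\'anchez, and as must be intended in the paper's Section 2) means a temporal function \emph{whose level sets are Cauchy surfaces}; that property is part of the hypothesis, not something derivable from the gradient condition, and with it (iii) $\Rightarrow$ (ii) is immediate and requires no argument. A smaller omission of the same flavor: your construction only shows that the level sets of the particular $\tau$ you build are homeomorphic to one another; the ``moreover'' clause asserts that \emph{any} two Cauchy hypersurfaces (which need not be smooth, nor level sets of anything) are homeomorphic, and for that you still need Geroch's argument of projecting along the flow of a complete timelike vector field.
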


The following causality condition plays a central role in our paper.

\begin{definition} \label{def:NOH}
 A spacetime $(M,g)$ is said to satisfy the \emph{No Observer Horizons condition (NOH)} if $I^+(\gamma) = I^-(\gamma) = M$ for every (future- and past-) inextendible causal curve $\gamma$.
\end{definition}

In Definition \ref{def:NOH}, it is equivalent to require the same conditions only on timelike curves. The NOH is also equivalent to the condition that the future and past causal boundaries of $M$ each consist of a single element.

\begin{definition}[{\cite[p.~3]{Paeng}}]
 A pair of subsets $A,B \subset M$ is called \emph{totally timelike connected} if for all points $p \in A$ and $q \in B$ it holds that $p \ll q$.
\end{definition}

Note that the order matters, since in particular, it follows that $B$ is in the future of $A$. Moreover, total timelike connectedness defines a transitive relation on subsets. The following lemma summarizes causality theory results proven in \cite[Sec. 2.3]{GaZe}.

\begin{lemma} \label{lem:NOHCauchy}
 Suppose $(M,g)$ is a causal spacetime that satisfies the NOH. Then $(M,g)$ is globally hyperbolic with compact Cauchy surfaces. Moreover, for every Cauchy temporal function $\tau \colon M \to \R$ and every $s \in \R$, there exist $t_1 < s < t_2$ such that $\tau^{-1}(t_1)$, $\tau^{-1}(s)$ and $\tau^{-1}(t_2)$ are pairwise totally timelike connected, in that order.
\end{lemma}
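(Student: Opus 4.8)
The plan is to first promote causality to global hyperbolicity, then to establish the single decisive fact that the causal future and past of every point each contain an entire Cauchy level set, and finally to deduce both compactness and total timelike connectedness from it.

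\emph{Step 1: global hyperbolicity.} Since $M$ is causal, by definition it suffices to check that every causal diamond $J^+(p)\cap J^-(q)$ is compact. If some diamond failed to be compact, then choosing points $x_k$ in it that leave every compact set, together with causal curves running $p\to x_k\to q$, the limit curve theorem would produce an inextendible causal curve $\gamma\subseteq\overline{J^-(q)}$ (or dually in $\overline{J^+(p)}$). As the chronological relation is open, $I^-(y)\subseteq I^-(q)$ for every $y\in\overline{J^-(q)}$, hence $I^-(\gamma)\subseteq I^-(q)\neq M$ (because $q\notin I^-(q)$ by causality), contradicting the NOH. Thus $M$ is globally hyperbolic, and by Theorem~\ref{thm:GerBeSa} we fix a Cauchy temporal function $\tau$ and set $\Sigma_t:=\tau^{-1}(t)$.

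\emph{Step 2: a whole level set in the future of each point (the crux).} I claim that for every $p$ there is $t_0$ with $\Sigma_t\subseteq I^+(p)$ for all $t>t_0$; this is where the NOH is used essentially and is the main obstacle. The boundary $\partial I^+(p)=J^+(p)\setminus I^+(p)$ is a closed achronal hypersurface ruled by null geodesics issuing from $p$. First, the NOH forbids a future-inextendible null generator $\eta$ of $\partial I^+(p)$: extending $\eta$ to the past below the level $\tau(p)$ gives an inextendible causal curve disjoint from the open set $I^+(p)$, so that $p\notin I^-(\eta)$ and $I^-(\eta)\neq M$. Second, $\partial I^+(p)$ is compact: were it not, one could take $w_k\in\partial I^+(p)$ leaving every compact set, join each to $p$ by a generator inside $\partial I^+(p)$, and let their initial null directions subconverge; since the affine lengths diverge, the limit is a future-inextendible generator lying in the closed set $\partial I^+(p)$, which was just excluded. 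Now $\tau$ attains a maximum $t_0$ on the compact set $\partial I^+(p)$. For $t>t_0$, any $x\in\Sigma_t$ lies on an inextendible causal curve which, by the NOH, enters the future set $I^+(p)$ and therefore crosses $\partial I^+(p)$ at a level $\le t_0<t$; since $\tau$ increases along the curve, $x$ lies beyond this crossing, whence $x\in I^+(p)$. Thus $\Sigma_t\subseteq I^+(p)$, and the dual statement for $I^-(p)$ follows symmetrically.

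\emph{Step 3: compactness and total timelike connectedness.} By Step 2, $\Sigma_t\subseteq I^+(p)\subseteq J^+(p)$ for some $t$; since $J^+(p)\cap\Sigma_t$ is compact in a globally hyperbolic spacetime and equals $\Sigma_t$, this $\Sigma_t$ is compact, and hence so is every Cauchy surface (all being homeomorphic by Theorem~\ref{thm:GerBeSa}). Fix now $s\in\R$. A crossing argument gives monotonicity: if $\Sigma_{t'}\subseteq I^+(a)$ and $t\ge t'$, then any $x\in\Sigma_t$ lies on an inextendible causal curve meeting $\Sigma_{t'}$ at some $y\le x$ with $a\ll y$, so $a\ll x$ and $\Sigma_t\subseteq I^+(a)$. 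Hence the sets $U_n:=\{a\in\Sigma_s:\Sigma_n\subseteq I^+(a)\}$ increase with $n$ and cover $\Sigma_s$ by Step 2. Each $U_n$ is open, since its complement is the image of the closed set $\{(a,b)\in\Sigma_s\times\Sigma_n:a\not\ll b\}$ under the projection to $\Sigma_s$, a closed map because the factor $\Sigma_n$ is now known to be compact. Compactness of $\Sigma_s$ yields an $N$ with $U_N=\Sigma_s$, i.e.\ $a\ll b$ for all $a\in\Sigma_s$ and $b\in\Sigma_{t_2}$, where $t_2:=\max(N,s)+1$; thus $\Sigma_s$ and $\Sigma_{t_2}$ are totally timelike connected in that order. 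Symmetrically one finds $t_1<s$ with $\Sigma_{t_1}$ totally timelike connected to $\Sigma_s$, and transitivity of total timelike connectedness gives the same for $\Sigma_{t_1}$ and $\Sigma_{t_2}$.
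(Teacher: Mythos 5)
Since the paper itself does not prove Lemma~\ref{lem:NOHCauchy} but quotes it as a summary of results from \cite{GaZe}, your self-contained argument is by construction a different route, and its architecture is essentially sound: the way you use the NOH --- a future-inextendible causal curve trapped in $\overline{J^-(q)}$, or in $\partial I^+(p)$, yields after completion to the past an inextendible causal curve whose chronological past misses $p$ or lies in $I^-(q)\neq M$ --- is exactly the right mechanism. Steps 2 and 3 are correct as written: compactness of $\partial I^+(p)$ gives $\Sigma_t\subseteq I^+(p)$ for all large $t$; compactness of one (hence, by Theorem~\ref{thm:GerBeSa}, every) Cauchy surface follows from the standard fact that $J^+(p)\cap\Sigma$ is compact in a globally hyperbolic spacetime; and the covering argument with the increasing sets $U_n$ (openness via the closed projection along the compact factor $\Sigma_n$), combined with monotonicity and transitivity of total timelike connectedness, correctly produces the triple $\Sigma_{t_1},\Sigma_s,\Sigma_{t_2}$.

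There is, however, one step in Step 1 that fails as written: from non-compactness of $J^+(p)\cap J^-(q)$ you cannot conclude that some sequence $x_k$ in it leaves every compact set. A diamond can fail to be compact by failing to be \emph{closed} while remaining inside a compact set (this is what happens in Minkowski space with one point deleted, which is causal), and closedness of diamonds is part of what global hyperbolicity asserts, so this case cannot be skipped. The repair is standard and needs no new ideas: take any sequence $x_k$ in the diamond with no subsequence converging \emph{in} the diamond, join $p\to x_k\to q$ by causal curves parametrized by $h$-arc length for a complete Riemannian metric $h$, and apply the limit curve theorem in both of its cases. If the $h$-lengths are bounded, a subsequence converges uniformly to a causal curve from $p$ to $q$ and the $x_k$ subconverge to a point on it, hence to a point of the diamond, a contradiction; if they are unbounded, you obtain your future-inextendible limit curve in $\overline{J^-(q)}$ and the NOH contradiction. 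Two smaller repairs: the limit curve starting at $p$ is only \emph{future}-inextendible, while Definition~\ref{def:NOH} quantifies over curves inextendible in both directions, so in Step 1 you must complete it to the past (as you do in Step 2 for the generator $\eta$); this is harmless because along a future-directed causal curve the pasts of points increase towards the future, so the completion does not enlarge the relevant chronological past. Finally, in the compactness argument for $\partial I^+(p)$, divergence of \emph{affine} lengths is neither guaranteed nor needed: parametrize the generators by $h$-arc length and invoke the limit curve theorem, whose output need not be a geodesic --- all that matters is that it is a future-inextendible causal curve lying in the closed set $\partial I^+(p)$.
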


A conformal transformation of $(M,g)$ is a diffeomorphism $\phi \colon M \to M$ such that $\phi^* g = \Omega g$, for $\Omega$ a smooth positive function on $M$. If, furthermore, the differential of $\phi$ maps future-directed causal vectors to future-directed causal vectors, we say that $\phi$ preserves the time orientation of $(M,g)$. We denote by $\Conf^\uparrow(M,g)$ the Lie group of time-orientation preserving conformal transformations.

The following result due to Hawking \cite[Lem.~19]{HawAdams}, and improved by Malament \cite[Thm.~2]{Mal}, physically motivates the study of conformal transformations, as they are precisely the maps preserving the causal structure.

\begin{theorem} \label{thm:Haw-Mal}
 Let $M,N$ be two distinguishing spacetimes, and $\phi \colon M \to N$ a bijection. Then, the following are equivalent:
 \begin{enumerate}
  \item $\phi$ and $\phi^{-1}$ preserve the chronological relations, i.e.
  \begin{equation*}
   x \ll y \iff \phi(x) \ll \phi(y).
  \end{equation*}
  \item $\phi$ is a time-orientation preserving conformal diffeomorphism.
 \end{enumerate}

\end{theorem}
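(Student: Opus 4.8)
The implication (ii) $\Rightarrow$ (i) is the elementary one, and I would dispatch it first. If $\phi$ is a time-orientation preserving conformal diffeomorphism, then since $\phi^* g = \Omega g$ with $\Omega > 0$ its differential preserves the timelike character of vectors, and since $\phi$ preserves the time orientation it sends future-directed timelike vectors to future-directed timelike vectors. Hence $\phi$ carries future-directed timelike curves to future-directed timelike curves, so $x \ll y$ forces $\phi(x) \ll \phi(y)$. As $\phi^{-1}$ is again a time-orientation preserving conformal diffeomorphism, applying the same reasoning to it yields the converse implication, establishing (i).

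The real content is (i) $\Rightarrow$ (ii), which I would organize into four steps that recover progressively more structure from the chronology relation alone. \emph{Step 1 (chronological data).} From (i) it is immediate that $\phi$ intertwines chronological futures and pasts, $\phi(I^+(p)) = I^+(\phi(p))$ and $\phi(I^-(p)) = I^-(\phi(p))$ for every $p$. The distinguishing hypothesis on $M$ and $N$ makes the assignment $p \mapsto (I^+(p), I^-(p))$ injective, so the chronology relation is rigid enough to determine $\phi$ pointwise. \emph{Step 2 (homeomorphism).} This is the crux. I would invoke Malament's principle that on a distinguishing spacetime the manifold topology is determined by the family of continuous timelike curves, and that this family is itself describable purely in terms of $\ll$: a linearly ordered point set is the image of a continuous timelike curve precisely when it satisfies a chronological condition. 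Since $\phi$ is an order-preserving bijection it permutes this family, and because the family generates the topology, both $\phi$ and $\phi^{-1}$ are continuous.

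\emph{Step 3 (causal and null structure).} Once $\phi$ is a homeomorphism preserving $\ll$, I would recover the causal relation $\le$ from $\ll$ by an appropriate passage to closures (this is where the distinguishing condition is again used), and conclude that $\phi$ preserves $\le$. The null geodesics, as unparametrized curves, can then be characterized as the achronal generators of the boundaries $\partial I^\pm(p)$, so $\phi$ maps null geodesics to null geodesics. \emph{Step 4 (smoothness and conformality).} Finally, a homeomorphism carrying the entire family of unparametrized null geodesics to null geodesics is forced to be smooth, and its differential maps null cones to null cones. Since a linear isomorphism of a Lorentzian vector space of dimension at least three that preserves the null cone is a positive multiple of a Lorentz transformation, it follows that $\phi^* g = \Omega g$ for a smooth $\Omega > 0$, and preservation of the future cone gives time-orientation preservation. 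This yields (ii).

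I expect Step 2 to be the genuine obstacle: upgrading a purely order-theoretic bijection to a homeomorphism under only the distinguishing hypothesis (rather than strong causality, under which the Alexandrov topology would coincide with the manifold topology and make continuity nearly automatic) is exactly the subtle point in Malament's refinement of the Hawking–King–McCarthy argument, and it is what the distinguishing assumption in the statement is there to supply.
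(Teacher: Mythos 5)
You should first be aware that the paper contains no proof of this theorem: it is stated as a classical result and attributed directly to Hawking \cite[Lem.~19]{HawAdams} and Malament \cite[Thm.~2]{Mal}, so there is no in-paper argument to compare against. That said, your outline reproduces exactly the architecture of the Hawking--King--McCarthy--Malament proof that those citations point to: (ii)~$\Rightarrow$~(i) is elementary; and for (i)~$\Rightarrow$~(ii) the decomposition into (a)~a chronological bijection between distinguishing spacetimes is a homeomorphism (Malament's contribution, where the distinguishing hypothesis does its work), and (b)~a homeomorphism preserving unparametrized null geodesics is a smooth conformal diffeomorphism (Hawking's lemma), is the standard and correct route, and you rightly flag (a) as the genuine obstacle. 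Note, however, that as written your text is a plan rather than a proof: the order-theoretic characterization of continuous timelike curves and the smoothness lemma are precisely the hard content of the cited papers, and you invoke both as black boxes.

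One sub-claim in your Step~3 is false in general: in a merely distinguishing spacetime the causal relation $\leq$ cannot be recovered from $\ll$ by passing to closures. For instance, Minkowski space with one point deleted is distinguishing, yet $\overline{I^+(p)}$ strictly contains $J^+(p)$ for suitable $p$, so ``$q \in \overline{I^+(p)}$'' does not characterize $p \leq q$. Fortunately you do not need the global causal relation at all: once $\phi$ is a homeomorphism intertwining $I^\pm$, the boundaries $\partial I^\pm(p)$ are preserved as topological boundaries of preserved sets, and your alternative characterization of null geodesics as achronal generators of these boundaries (achronality being purely a $\ll$-condition) goes through directly --- this is in fact what Hawking--King--McCarthy use. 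So the flaw is localized and repairable, and the remainder of your outline is faithful to the proof in the literature that the paper relies on.
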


\subsection{Group actions}

An action of a group $G$ on a manifold $M$ is a map $G \times M \to M,\ (g,p) \mapsto g(p)$ such that for all $p \in M$, $g(h(p)) = (gh)(p)$ for all $g,h \in G$ and $e(p) = p$ for $e \in G$ the identity element. Of particular importance are proper actions, since this condition implies that the dynamics are non-chaotic, and one can obtain a Hausdorff quotient. The following proposition gives different characterizations of properness.

\begin{proposition}[{\cite[Prop.~21.5]{Lee}}] \label{prop:properactions}
 Let $M$ be a smooth manifold and $G$ a Lie group acting continuously on $M$. The following are equivalent:
 \begin{enumerate}
  \item The action is proper, meaning that
  \begin{equation*}
   G \times M \longrightarrow M \times M,\quad (g,p) \longmapsto (g(p),p)
  \end{equation*}
  is a proper map.
  \item For every pair of sequences $(p_i)_i$ in $M$ and $(g_i)_i$ in $G$ such that $(p_i)_i$ and $(g_i(p_i))_i$ converge in $M$, $(g_i)_i$ has a convergent subsequence in $G$.
  \item For every compact set $K \subseteq M$, the set
  \begin{equation*}
   G_K := \{ g \in G \mid K \cap g(K) \neq \emptyset \}
  \end{equation*}
 is compact. 
 \end{enumerate}
\end{proposition}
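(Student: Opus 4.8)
The plan is to establish the equivalences along the route (i) $\Leftrightarrow$ (iii) by a purely set-theoretic manipulation of preimages and projections, and (i) $\Leftrightarrow$ (ii) by invoking the sequential characterization of proper maps. Throughout I write $\Theta \colon G \times M \to M \times M$ for the action map $(g,p) \mapsto (g(p),p)$ appearing in (i). The one external fact I would rely on is that, because $M$ is a manifold and $G$ a Lie group --- hence both are locally compact, Hausdorff and second countable, and in particular metrizable --- a continuous map between them is proper if and only if every sequence in the source whose image converges in the target has a convergent subsequence.

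For (i) $\Rightarrow$ (iii), I would fix a compact $K \subseteq M$ and note that $K \times K$ is compact, so $\Theta^{-1}(K \times K)$ is compact by properness. Unwinding definitions, $\Theta^{-1}(K \times K) = \{(g,p) : p \in K \text{ and } g(p) \in K\}$, and the key observation is that $K \cap g(K) \neq \emptyset$ holds precisely when there is some $p \in K$ with $g(p) \in K$; hence $G_K$ is exactly the image of $\Theta^{-1}(K\times K)$ under the projection to $G$, and is therefore compact as a continuous image of a compact set. For the converse (iii) $\Rightarrow$ (i), given a compact $C \subseteq M \times M$ I would put $K := \mathrm{pr}_1(C) \cup \mathrm{pr}_2(C)$, which is compact in $M$. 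Any $(g,p) \in \Theta^{-1}(C)$ satisfies $p \in K$ and $g(p) \in K$, so $g \in G_K$; thus $\Theta^{-1}(C) \subseteq G_K \times K$. Since $G_K$ is compact by hypothesis and $\Theta^{-1}(C)$ is closed (a continuous preimage of the closed set $C$), it is a closed subset of the compact set $G_K \times K$, hence compact. This gives properness of $\Theta$.

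The equivalence (i) $\Leftrightarrow$ (ii) is then a direct translation through the sequential criterion. If (i) holds and $(p_i)$, $(g_i(p_i))$ converge in $M$, then $\Theta(g_i,p_i) = (g_i(p_i),p_i)$ converges in $M \times M$, so properness yields a convergent subsequence of $(g_i,p_i)$; as $(p_i)$ already converges, this is the same as a convergent subsequence of $(g_i)$, which is (ii). Conversely, assuming (ii), if $\Theta(g_i,p_i)$ converges then both coordinates converge, (ii) provides $g_{i_k} \to g$, and then $(g_{i_k},p_{i_k}) \to (g,p)$ exhibits the convergent subsequence demanded by the sequential criterion, giving properness.

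I expect the only genuine obstacle to be the sequential characterization of properness used in (i) $\Leftrightarrow$ (ii): this is where the standing topological hypotheses enter, since the reduction of compactness to sequential compactness needs metrizability, and the identification of \emph{proper} with the condition that image-convergent sequences have convergent subsequences needs local compactness of the target. On second-countable manifolds and Lie groups these hold automatically, so once this lemma is in hand the remainder is routine bookkeeping with projections, preimages, and the closedness of continuous preimages of compact sets.
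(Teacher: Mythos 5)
Your proof is correct. Note that the paper itself offers no proof of this proposition --- it is quoted directly from Lee's textbook (Prop.~21.5) --- so the comparison is with the standard argument there, and yours is essentially that argument, lightly reorganized: you prove (i) $\Leftrightarrow$ (iii) by pure set-theoretic bookkeeping (identifying $G_K$ as the projection of $\Theta^{-1}(K\times K)$, and trapping $\Theta^{-1}(C)$ inside the compact set $G_K\times K$ with $K=\mathrm{pr}_1(C)\cup\mathrm{pr}_2(C)$), whereas Lee routes the cycle (a)$\Rightarrow$(b)$\Rightarrow$(c)$\Rightarrow$(a) through sequences; the content is the same. One small inaccuracy in your closing remark: the sequential characterization of properness you invoke does not actually need local compactness of the target --- metrizability of source and target suffices (compactness equals sequential compactness there, preimages of compact sets are closed, and a convergent sequence together with its limit is compact) --- so your attribution of where the hypotheses enter is slightly off, but this is harmless since manifolds and Lie groups are metrizable and the lemma you use is true under the stated hypotheses.
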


We will say that an element $g \in G$ acts properly if the action of the subgroup $\langle g \rangle$ generated by $g$ is proper. The following lemma will be useful in various proofs, because it allows us to consider powers of a given element, instead of the element itself.

\begin{lemma} \label{lem:powers}
 Let $G$ be a Lie group acting smoothly on a manifold $M$, and let $N \in \N$ and $g \in G$. If $\langle g^N \rangle$ acts properly, then $\langle g \rangle$ acts properly. The same holds for the closures $\overline{\langle g^N \rangle}$ and $\overline{\langle g \rangle}$.
\end{lemma}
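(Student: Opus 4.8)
The plan is to exploit the purely algebraic fact that $\langle g^N\rangle$ is a finite-index subgroup of $\langle g\rangle$, with coset representatives $g^0,g^1,\dots,g^{N-1}$, so that $\langle g\rangle=\bigcup_{j=0}^{N-1} g^j\langle g^N\rangle$. I would then verify properness through the sequential characterization~(ii) of Proposition~\ref{prop:properactions}, which has the advantage of applying uniformly whether the acting group carries the discrete topology (the case of $\langle g\rangle$) or the subspace topology inherited from $G$ (the case of the closures, which are Lie subgroups by Cartan's theorem). For the first statement I take sequences $(p_i)$ in $M$ and $(g^{k_i})$ in $\langle g\rangle$ with $(p_i)$ and $(g^{k_i}(p_i))$ convergent, and aim to extract a subsequence of $(g^{k_i})$ that converges, i.e.\ is eventually constant.

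Writing $k_i=Nq_i+r_i$ with $0\le r_i<N$, and passing to a subsequence on which by pigeonhole the remainder $r_i\equiv r$ is constant, one gets $g^{k_i}(p_i)=(g^N)^{q_i}(g^r(p_i))$. Since $g^r$ is a fixed homeomorphism of $M$, the points $p_i':=g^r(p_i)$ converge; and $(g^N)^{q_i}(p_i')=g^{k_i}(p_i)$ converges by hypothesis. Properness of $\langle g^N\rangle$, applied through characterization~(ii), then yields a further subsequence along which $(g^N)^{q_i}$ converges in $\langle g^N\rangle$, hence (by discreteness) along which $q_i$ is constant; on it $k_i=Nq+r$ is constant, giving the desired convergent subsequence of $(g^{k_i})$. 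This proves that $\langle g\rangle$ acts properly.

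For the closures I would first record the key topological point that taking closures commutes with the finite coset decomposition: since closure distributes over finite unions and each left translation $x\mapsto g^jx$ is a homeomorphism of $G$,
\[
 \overline{\langle g\rangle}=\overline{\bigcup_{j=0}^{N-1} g^j\langle g^N\rangle}=\bigcup_{j=0}^{N-1} g^j\,\overline{\langle g^N\rangle}.
\]
Thus $\overline{\langle g^N\rangle}$ is a closed, finite-index (hence also open) subgroup of $\overline{\langle g\rangle}$, with the same representatives $g^0,\dots,g^{N-1}$. The previous argument then transposes: given $(p_i)$ in $M$ and $(h_i)$ in $\overline{\langle g\rangle}$ with $(p_i)$ and $(h_i(p_i))$ convergent, pigeonhole lets me assume all $h_i$ lie in a single coset $g^{j_0}\overline{\langle g^N\rangle}$; writing $h_i=g^{j_0}\tilde h_i$ with $\tilde h_i\in\overline{\langle g^N\rangle}$, the sequence $\tilde h_i(p_i)=g^{-j_0}(h_i(p_i))$ converges, so properness of $\overline{\langle g^N\rangle}$ extracts $\tilde h_{i_\ell}\to\tilde h$, whence $h_{i_\ell}=g^{j_0}\tilde h_{i_\ell}\to g^{j_0}\tilde h\in\overline{\langle g\rangle}$ by continuity of multiplication and closedness of $\overline{\langle g\rangle}$.

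The argument is essentially elementary once characterization~(ii) is chosen, since that formulation absorbs both the discrete and the non-discrete cases at once. The only genuine point, as opposed to a routine verification, is the commutation of closure with the coset decomposition in the second statement (together with the attendant remark that a finite-index closed subgroup is open), which is precisely what legitimizes the pigeonhole reduction in the non-discrete setting; beyond this I do not anticipate any serious obstacle.
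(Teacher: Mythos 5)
Your proof is correct and follows essentially the same route as the paper's: division with remainder plus pigeonhole on the finitely many coset representatives $g^0,\dots,g^{N-1}$, combined with the sequential characterization (ii) of Proposition~\ref{prop:properactions}, and for the closures the same key decomposition $\overline{\langle g \rangle}=\bigcup_{j=0}^{N-1} g^j\,\overline{\langle g^N \rangle}$ that the paper states as ``a commuting product $g^R h$''. The only differences are cosmetic: the paper verifies the closure case via the compact-set criterion (iii) with the enlarged set $K'=\bigcup_R g^R(K)$ whereas you stay sequential, and your derivation of the coset decomposition (closure distributes over finite unions, translations are homeomorphisms) is a cleaner justification of what the paper only sketches ``by an argument with sequences''; your parenthetical ``by discreteness $q_i$ is constant'' is superfluous (and slightly off if $g$ has finite order), but the conclusion you need follows without it.
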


\begin{proof}
 Let $(n_k)_{k \in \N}$ and $(p_k)_{k \in \N}$ be a sequence of integers and of points in $M$, respectively. Suppose that $p_k \to p$ and $g^{n_k}(p_k) \to q$ as $k \to \infty$. We need to show that a subsequence of $(g^{n_k})_{k \in \N}$ converges in $\langle g \rangle$. Necessarily, there is a $R \in \{0,...,\vert N-1 \vert \}$ such that $(n_k)_{k \in \N}$ contains a subsequence of the form
 \begin{equation*}
  (N i_k + R)_{k \in \N},
 \end{equation*}
 where $i_k \in \Z$ (to see this, notice that $(n_k \mod N)_{k \in \N}$ must have at least one element repeated infinitely many times). Now
 \begin{equation*}
  g^R(p_k) \to g^R(p) \quad \text{and} \quad g^{N i_k}(g^R(p_k)) = g^{Ni_k + R} (p_k) \to q,
 \end{equation*}
 so it follows by properness of $g^N$ that $(g^{Ni_k})_{k \in \N}$ has a converging subsequence $(h_l)_{l \in \N}$ in $\langle g^N \rangle$. But then $(g^R \circ h_l)_{l \in \N}$ is a converging subsequence of $(g^{n_k})_{k \in \N}$ in $\langle g \rangle$, and we are done.

 To prove the statement for the closures $\overline{\langle g^N \rangle}$ and $\overline{\langle g \rangle}$, we use the following fact: Every element of $\overline{\langle g \rangle}$ is a commuting product $g^R h$ for $0 \leq R < N$ and $h \in \overline{\langle g^N \rangle}$ (this can be shown by an argument with sequences, similar to the previous paragraph). It follows that for every compact $K \subset M$, the set
 \begin{equation*}
  K' := \bigcup_{R = 0}^{N-1} g^R(K)
 \end{equation*}
 is also compact (in $M$), and that
 \begin{equation*}
  \{h \in \overline{\langle g \rangle} \mid K \cap h(K) \neq \emptyset  \}
 \end{equation*}
  is compact in $\overline{\langle g \rangle}$ if and only if
 \begin{equation*}
  \{f \in \overline{\langle g^N \rangle} \mid K' \cap f(K') \neq \emptyset \}
 \end{equation*}
 is compact in $\overline{\langle g^N \rangle}$. Thus $\overline{\langle g^N \rangle}$ acting properly implies that $\overline{\langle g \rangle}$ acts properly, as desired.
\end{proof}

Finally, recall the following terminology: a group action is called \emph{free} if it has no fixed points. When a Lie group $G$ acts smoothly, freely and properly on a manifold $M$, then the quotient space $M / G$ is a manifold. We say that the action is \emph{cocompact} if $M / G$ is compact.


\section{Escaping vs non-escaping conformal transformations}

In this section, we restate and prove our main result. Theorem~\ref{thm:intro1} from the introduction easily follows, given that, by Lemma~\ref{lem:NOHCauchy}, sets of the form $\tau^{-1}([a,b])$ are compact, for every $-\infty < a \leq b < +\infty$ and every Cauchy time function $\tau$.

\begin{theorem} \label{thm:escap}
 Let $(M,g)$ be a causal spacetime satisfying the NOH, and let $\phi \in \Conf^\uparrow(M,g)$ be a time-orientation preserving conformal transformation. Then, one of the following mutually exclusive possibilities holds:
 \begin{enumerate}
  \item For every compact subset $K \subset M$, $\displaystyle \bigcup_{j \in \Z} \phi^j(K)$ is relatively compact. 
  \item For a fixed $J \in \Z$ and all $q \in M$, it holds that $q \ll \phi^J(q)$.
 \end{enumerate}
 If (ii) holds, then the action of $\phi$ is free, proper, and cocompact. Moreover, if (ii) holds for $J>0$, then it cannot simultaneously hold for $J<0$.
\end{theorem}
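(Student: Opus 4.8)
The plan is to read the dichotomy off the behaviour of a Cauchy temporal function $\tau$ along the orbit of a single Cauchy surface, exploiting only that $\phi$ preserves the causal structure. Fix $\tau$ (Theorem~\ref{thm:GerBeSa}); by Lemma~\ref{lem:NOHCauchy} every slab $\tau^{-1}([a,b])$ is compact, so a subset of $M$ is relatively compact if and only if $\tau$ is bounded on it. Since $\phi,\phi^{-1}$ are time-orientation preserving causal automorphisms (Theorem~\ref{thm:Haw-Mal}), $\phi$ sends future-inextendible causal curves to future-inextendible causal curves, hence compact Cauchy surfaces to compact Cauchy surfaces. I would then split on whether (i) holds. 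If (i) fails, some compact $K$ has $\tau$ unbounded on $\bigcup_k\phi^k(K)$; bounding $K$ above and below by level sets $\tau^{-1}(c)$ shows that some level surface $\Sigma=\tau^{-1}(c)$ also has $\tau$ unbounded on $\bigcup_k\Sigma_k$, where $\Sigma_k:=\phi^k(\Sigma)$. It is for this $\Sigma$ that I produce the uniform push of~(ii).

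The main obstacle is that the $\Sigma_k$ could become badly tilted, with $\max_{\Sigma_k}\tau\to+\infty$ while $\min_{\Sigma_k}\tau$ stays bounded; such spreading would stop any single power of $\phi$ from moving $\Sigma$ entirely into its own future. The key observation that rules this out is a \emph{no-spreading lemma} extracted from Lemma~\ref{lem:NOHCauchy}: for each $A$ there is a level $T>A$ with $\tau^{-1}(A)\ll\tau^{-1}(T)$ (totally timelike connected), and if a Cauchy surface $S$ met both $\{\tau\le A\}$ and $\{\tau\ge T\}$, the two witnessing points of $S$ would satisfy $\ll$, contradicting achronality of $S$. Thus $\max_{S}\tau\ge T\Rightarrow\min_{S}\tau>A$, so along any sequence of Cauchy surfaces $\min\tau\to+\infty$ as soon as $\max\tau\to+\infty$ (and symmetrically for $-\infty$). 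Applying this in the unbounded case, after possibly replacing $\phi$ by $\phi^{-1}$ I may assume $\max_{\Sigma_{k_i}}\tau\to+\infty$ along some $k_i\to+\infty$; the lemma upgrades this to $\min_{\Sigma_{k_i}}\tau\to+\infty$, so for a suitable index $J_0$ the surface $\Sigma_{J_0}$ lies entirely above the level $T$ attached to $A=\max_\Sigma\tau$, which gives $\Sigma\ll\Sigma_{J_0}$ (totally timelike connected).

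From $\Sigma\ll\phi^{J_0}(\Sigma)$ I would deduce~(ii). Passing to $\psi:=\phi^{J_0}$ is harmless, as unboundedness of orbits, properness and cocompactness all descend to $\phi$ via Lemma~\ref{lem:powers} and an elementary cover argument. Iterating $\Sigma\ll\psi(\Sigma)$ produces totally timelike connected surfaces $\psi^m(\Sigma)$ with disjoint increasing $\tau$-ranges, and the no-spreading lemma forces $\min_{\psi^m(\Sigma)}\tau\to+\infty$ as $m\to+\infty$ and $\to-\infty$ as $m\to-\infty$; this last, backward escape is exactly where the NOH is used crucially, a putative finite accumulation level yielding a $\psi$-fixed point, i.e.\ an observer horizon, which is forbidden (just as boosts fail on Minkowski and de Sitter). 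Hence the compact fundamental slab $D:=J^+(\Sigma)\cap J^-(\psi(\Sigma))$ tiles $M=\bigcup_m\psi^m(D)$. For $q\in D$ and $N$ large, $\tau(\psi^N(q))\ge\min_{\psi^N(\Sigma)}\tau$ eventually exceeds the threshold $T_D$ attached to $A=\max_D\tau$, and since $q\ll\tau^{-1}(T_D)\ll\psi^N(q)$ I get $q\ll\psi^N(q)$; conjugating by $\psi^m$ and using the tiling gives $q\ll\phi^{J}(q)$ for all $q\in M$ with $J:=NJ_0$, which is~(ii). Freeness is then immediate (a fixed point $p$ would force $p\ll\phi^{J}(p)=p$), properness follows since uniform escaping pushes $\psi^m(K)$ off any compact $K$ for $|m|$ large (Proposition~\ref{prop:properactions}(iii) and Lemma~\ref{lem:powers}), and cocompactness follows from $M=\bigcup_m\psi^m(D)$.

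It remains to check mutual exclusivity and the sign statement, which are clean. If~(i) and~(ii) held simultaneously with $J>0$, the chain $q\ll\phi^J(q)\ll\phi^{2J}(q)\ll\cdots$ would be $\tau$-bounded; a convergent subsequence $\phi^{m_iJ}(q)\to q_*$, together with the $\tau$-values increasing to a finite limit $L$, would give $\tau(\phi^{J}(q_*))=L=\tau(q_*)$, contradicting $q_*\ll\phi^{J}(q_*)$ (here $J=0$ is already excluded, since $q\ll q$ violates causality). The same mechanism forbids~(ii) for both some $J>0$ and some $J'<0$: then $q\ll\phi^{J}(q)$ and $\phi^{|J'|}(q)\ll q$ for all $q$, so $q\ll\phi^{J|J'|}(q)\ll q$, a closed timelike curve through every point, contradicting causality.
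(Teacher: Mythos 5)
Your overall architecture (push one Cauchy surface strictly into its own future, tile $M$ by the slabs between iterates, deduce the uniform push, then freeness/properness/cocompactness) matches the paper's, and several pieces are sound: the no-spreading lemma is a correct extraction from Lemma~\ref{lem:NOHCauchy}, the derivation of $\Sigma\ll\psi(\Sigma)$ from failure of (i) works, the passage from the tiling to condition (ii) is right, and your mutual-exclusivity argument via the limit of $\tau$-values is actually \emph{simpler} than the paper's (which builds a limit point and uses the achronal set $J^+(p_\infty)\setminus I^+(p_\infty)$). But there is a genuine gap at the heart of the tiling step: you rule out a finite accumulation level of the surfaces $\psi^m(\Sigma)$ by the chain ``accumulation $\Rightarrow$ $\psi$-fixed point $\Rightarrow$ observer horizon $\Rightarrow$ forbidden by NOH,'' and the last implication is false. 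A fixed point of a conformal transformation is perfectly compatible with the NOH: the identity, or a spatial rotation of $\hEins^{1,n}$, has fixed points, and the paper's own Theorem~\ref{thm:classification}(ii) classifies non-escaping transformations \emph{with} fixed points. A fixed point of $\psi^K$ contradicts only the uniform statement $q\ll\psi^K(q)$ for \emph{all} $q$ (this is exactly how the paper uses it in the exclusivity part) --- but at this stage that uniform statement is precisely what you are trying to prove, so invoking it is circular; all you have is $\Sigma\ll\psi(\Sigma)$ for the single surface $\Sigma$. Relatedly, your claim that the forward divergence $\min_{\psi^m(\Sigma)}\tau\to+\infty$ is ``forced by the no-spreading lemma'' is also unjustified: no-spreading rules out tilting (max large while min stays bounded), but not the monotone bounded scenario $\max_{\psi^m(\Sigma)}\tau\nearrow L<\infty$, and this scenario has to be excluded in \emph{both} time directions, not just the backward one.

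The correct mechanism --- and the paper's --- is a contradiction with \emph{causality}, not with the NOH (whose only role here is the no-spreading bound). Write $\psi^m(\Sigma)$ as graphs of functions $F_m$ over a fixed Cauchy surface; if some value $F_m(x)$ stays bounded, no-spreading bounds all $F_m$ uniformly, so $F_m\to F$ pointwise. Total timelike connectedness of consecutive surfaces gives, for $x\neq y$,
\begin{equation*}
 (F_{m-1}(y),y)\ll(F_m(x),x)\ll(F_{m+1}(y),y),
\end{equation*}
and closedness of $\leq$ (global hyperbolicity) yields in the limit $(F(y),y)\leq(F(x),x)\leq(F(y),y)$, two distinct mutually causally related points, contradicting antisymmetry of $\leq$. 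Your fixed-point observation can be repaired into exactly this argument: interleaving plus closedness of $\leq$ plus antisymmetry show that \emph{all} convergent sequences $w_m\in\psi^m(\Sigma)$ must share a single limit, yet the intersections of $\psi^m(\Sigma)$ with two distinct vertical lines converge to points with distinct spatial coordinates. Once this step is replaced, the rest of your proof goes through.
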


As in the introduction, we call the conformal transformations satisfying (i) \emph{non-escaping}, and the ones satisfying (ii) \emph{escaping}. Furthermore, we distinguish between \emph{future-escaping} if $J>0$ and \emph{past-escaping} if $J<0$. Note that condition (i) is equivalent to requiring that every compact set $K$ is contained in some $\phi$-invariant compact set $K'$. This is simply because $\bigcup_{j \in \Z} \phi^j(K)$ is manifestly $\phi$-invariant, so its closure must be an invariant compact set.

\begin{proof}
 By Theorem~\ref{thm:Haw-Mal}, $\phi$ preserves the chronological relation $\ll$. Let $\tau \colon M \to \R$ be a Cauchy temporal function, so that $M$ is foliated by its level sets $\Sigma_t := \tau^{-1}(t)$, and $M = \R \times \Sigma_0$ splits as in Theorem~\ref{thm:GerBeSa}. Suppose that $K \subset M$ is a compact set that does not satisfy (i). We first show that then the action is free, proper, and cocompact.

 Because $K$ is compact, there is $a>0$ such that $K \subset [-a,a] \times \Sigma_0$. By the NOH and Lemma~\ref{lem:NOHCauchy}, we may choose $a>0$ large enough, and find a suitable $b > a$, such that $\Sigma_{-b}, \Sigma_{-a}, \Sigma_0, \Sigma_a, \Sigma_b$ are pairwise totally timelike connected, in this order. By assumption, there is a point $p \in K$ whose orbit leaves $[-b,b] \times \Sigma_0$. Assume, without loss of generality, that $p \in \Sigma_0$ and that there is a $N \in \Z$ such that $\tau(\phi^N(p))>b$. We denote $\psi := \phi^N$.

 Because $\psi$ preserves the causal structure, $\psi(\Sigma_0)$ is a Cauchy surface. From $\tau(\psi(p))>b$ and the choice of $a$ and $b$, it follows that $\Sigma_a \subset I^-(\psi(p)) \subset I^-(\psi(\Sigma_0))$. Hence $\psi(\Sigma_0) \subset I^+(\Sigma_a) \subset I^+(\Sigma_0)$, and $\Sigma_0$ and $\psi(\Sigma_0)$ are totally timelike connected. We claim that
 \begin{equation*}
  M_0 := J^+(\Sigma_0) \cap I^-(\psi(\Sigma_0))
 \end{equation*}
 is a fundamental domain for the action of $\psi$.

 Suppose $q \in M_0$. Then, for all $k >0$,
 \begin{align*}
  \psi^{k}(q) \in J^+(\psi^{k}(\Sigma_0)) &\implies \psi^{k}(q) \notin I^-(\psi^{k}(\Sigma_0)) \supset I^-(\psi(\Sigma_0)) \\ &\implies \psi^{k}(q) \notin M_0.
 \end{align*}
 Similarly, for $k <0$,
 \begin{align*}
  \psi^{k}(q) \in I^-(\psi^{k+1}(\Sigma_0)) &\implies \psi^{k}(q) \notin J^+(\psi^{k+1} (\Sigma_0)) \supset J^+(\Sigma_0) \\ &\implies \psi^{k}(q) \notin M_0.
 \end{align*}
 Thus every orbit has at most one representative in $M_0$. Moreover, this argument also shows that no point in $M_0$ is fixed.

 It remains to show that every orbit has at least one representative in $M_0$ (which together with the above then also implies that $\psi$ has no fixed points). This is equivalent to showing that
 \begin{equation} \label{eq:union}
  M = \bigcup_{i \in \Z} M_i, \quad \text{where} \quad M_i := \psi^i(M_0) = J^+(\psi^i(\Sigma_0)) \cap I^-(\psi^{i+1}(\Sigma_0)).
 \end{equation}
 Note that $M_{i-1} \subset I^+(M_{i})$, that $M_{i-1}$ and $M_{i}$ are directly adjacent, and that the boundary between the two is precisely $\psi^i(\Sigma_0)$. Moreover, $\psi^i(\Sigma_0)$, being a Cauchy surface in $M= \R \times \Sigma_0$, is given by the graph of a function $F_i \colon \Sigma_0 \to \R$. Furthermore, since $\psi^i(\Sigma_0) \subset I^+(\psi^{i-1}(\Sigma_0))$, it holds that $F_i > F_{i-1}$. Thus \eqref{eq:union} holds if $F_i(x) \to \pm\infty$ as $i \to \pm\infty$, for every $x \in \Sigma_0$.

 Suppose that there is a $x \in \Sigma_0$ such that $F_i(x)$ is bounded above (the case of being bounded below can be treated analogously). Then $F_i(x) \to C < \infty$ as $i \to \infty$. By Lemma~\ref{lem:NOHCauchy}, there is a value $C' > C$ such that $\Sigma_C$ and $\Sigma_{C'}$ are totally timelike connected. It follows that $F_i(y)<C'$ for all $y \in \Sigma_0$, because the level sets of each $F_i$, being Cauchy surfaces, must be achronal. Hence, in fact, the sequence of functions $F_i$ converges pointwise to some limit function $F$. Let $x \neq y \in \Sigma_0$. Moreover, $\Sigma_0$ and $\psi(\Sigma_0)$ being totally timelike connected implies that so are $F_{i}$ and $F_{i+1}$, for all $i$. Thus $(F_{i-1}(y),y) \ll (F_i(x),x) \ll (F_{i+1}(y),y)$. Taking the limit $i \to \infty$, and using that the causal relation $\leq$ is closed (by global hyperbolicity), we conclude that $(F(y),y) \leq (F(x),x) \leq (F(y),y)$. This contradicts causality, and concludes the proof of \eqref{eq:union}.

 Having proven that $M_0$ is a fundamental domain, using \cite[Lem.~21.11]{Lee} it is easy to see that the action of $\psi := \phi^N$ is proper. Moreover, since $M_0$ has compact closure, it follows that the action is cocompact. Thus also the action of $\phi$ must be free, proper, and cocompact (see Lemma~\ref{lem:powers}).

 Now, by total timelike connectedness of the boundaries between the $M_i$'s, it is easy to see that for $J=2N$, $\phi^{J} = \psi^2$ has the property $q \ll \psi^2(q)$ for all $q \in M$. Let us prove that there cannot simultaneously exist $J > 0$ and  $J' > 0$ such that $q \ll \phi^J(q)$ and $q \gg \phi^{J'}(q)$ for all $q \in M$. Suppose that such $J,J'$ did exist. Then $q \ll \phi^J(q) \ll \phi^{2J}(q) \ll ... \ll \phi^{J' J}(q)$. But analogously also $q \gg \phi^{J J'}(q)$, a contradiction. This proves the distinction between future- and past-escaping transformations.

 Finally, we prove that statements (i) and (ii) are mutually exclusive. Suppose that both hold. By (i), $p \ll \phi^J (p)$ for all $p \in M$. Then, the sequence $p_k := \phi^{kJ} (p)$ forms a chronological chain, i.e.\ $p_k \ll p_{k+1}$, and we can find a timelike curve $\gamma$ with $\gamma(k) = p_k$. By global hyperbolicity, there are two possibilities:
 \begin{enumerate}
  \item $\gamma$ has a future endpoint $p_\infty$, and $p_k \to p_\infty$. Necessarily, $p_\infty$ is a fixed point of $\phi^J$.
  \item $\gamma$ is future inextendible, $p_k$ does not converge, and $\phi$ is escaping.
 \end{enumerate}
 We need to show that the first case cannot occur. Suppose it did. Because $\phi^J$ preserves the causal relation, as well as the chronological one, the set
 \begin{equation*}
  H^+(p_\infty) := J^+(p_\infty) \setminus I^+(p_\infty)
 \end{equation*}
 must be preserved by $\phi^J$. However, $H^+(p_\infty)$ is non-empty and achronal, while $q \ll \phi^J(q)$ for all $q \in M$, a contradiction.
\end{proof}

We end this section by proving some additional facts about the set $\mathcal{E}^+ \subset \Conf^\uparrow(M,g)$ of future-escaping conformal transformations.
\newpage

\begin{proposition} \label{prop:conj}
 Let $(M,g)$ be a causal spacetime satisfying the NOH. Then:
 \begin{enumerate}
  \item $\mathcal{E}^+$ is open and invariant under conjugation.
  \item If $t \mapsto \Psi_t$ is a one-parameter subgroup with $\Psi_1 \in \mathcal{E}^+$, then $\Psi_t \in \mathcal{E}^+$ for all $t > 0$.
 \end{enumerate}
\end{proposition}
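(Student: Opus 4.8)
For part (i), the plan is to dispatch conjugation invariance immediately and then obtain openness from a characterization of $\mathcal{E}^+$ living on a single compact Cauchy surface. Conjugation is direct: if $\phi\in\mathcal{E}^+$ then by condition (ii) of Theorem~\ref{thm:escap} (with the sign convention $J>0$) there is $J>0$ with $q\ll\phi^J(q)$ for all $q$; since $(\chi\phi\chi^{-1})^J=\chi\phi^J\chi^{-1}$ and any $\chi\in\Conf^\uparrow(M,g)$ preserves $\ll$ by Theorem~\ref{thm:Haw-Mal}, applying $\chi$ to $\chi^{-1}(r)\ll\phi^J(\chi^{-1}(r))$ gives $r\ll(\chi\phi\chi^{-1})^J(r)$ for every $r$, so $\chi\phi\chi^{-1}\in\mathcal{E}^+$. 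For openness I would fix a compact Cauchy surface $\Sigma_0=\tau^{-1}(0)$ and set, for each integer $J\geq 1$,
\[
 U_J:=\{\phi\in\Conf^\uparrow(M,g):\Sigma_0 \text{ and } \phi^J(\Sigma_0)\text{ are totally timelike connected}\},
\]
and prove $\mathcal{E}^+=\bigcup_{J\geq 1}U_J$. The inclusion $\mathcal{E}^+\subseteq\bigcup_J U_J$ is exactly the computation in the proof of Theorem~\ref{thm:escap}: for future-escaping $\phi$ one finds $N>0$ and $p\in\Sigma_0$ with $\tau(\phi^N(p))>b$, whence $\Sigma_0$ and $\phi^N(\Sigma_0)$ are totally timelike connected. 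Conversely, if $\phi\in U_J$, applying $\phi^{iJ}$ shows the surfaces $\phi^{iJ}(\Sigma_0)$ are consecutively totally timelike connected, so the slabs $M_i=J^+(\phi^{iJ}(\Sigma_0))\cap I^-(\phi^{(i+1)J}(\Sigma_0))$ tile $M$ (the covering \eqref{eq:union}, whose proof uses only the NOH and total timelike connectedness); tracking a point $q\in M_i$ across two slabs yields $q\ll\phi^{2J}(q)$, so $\phi$ is escaping with $2J>0$, i.e.\ future-escaping.

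It then remains to show each $U_J$ is open. Since the chronology relation $W:=\{(x,y)\in M\times M:x\ll y\}$ is open and the action map is continuous, the map $\Phi\colon\Conf^\uparrow(M,g)\times\Sigma_0\times\Sigma_0\to M\times M$, $(\phi,p,y)\mapsto(p,\phi^J(y))$, is continuous, so $\Phi^{-1}(W)$ is open. For $\phi_0\in U_J$ we have $\{\phi_0\}\times\Sigma_0\times\Sigma_0\subseteq\Phi^{-1}(W)$, and since $\Sigma_0\times\Sigma_0$ is compact (this is where the compact Cauchy surface is essential), the tube lemma produces a neighbourhood $V\ni\phi_0$ with $V\subseteq U_J$. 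Hence $\mathcal{E}^+$ is a union of open sets, so open.

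For part (ii), let $t\mapsto\Psi_t$ be a one-parameter subgroup with $\Psi_1\in\mathcal{E}^+$. First I would rule out the non-escaping alternative of Theorem~\ref{thm:escap} for each $\Psi_s$, $s>0$: if $\Psi_s$ were non-escaping, then $\{\Psi_{sj}(p):j\in\Z\}$ would be relatively compact for any $p$, hence so would the whole orbit $\{\Psi_r(p):r\in\R\}=\Psi_{[0,s]}(\{\Psi_{sj}(p)\}_j)$ (a continuous image of a compact set), forcing $\tau$ to be bounded along $(\Psi_1^n(p))_n$ and contradicting $\tau(\Psi_1^n(p))\to+\infty$ from Theorem~\ref{thm:intro1}(i$^+$). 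Thus every $\Psi_s$ with $s>0$ lies in $\mathcal{E}^+\cup\mathcal{E}^-$, where $\mathcal{E}^-$ is the set of past-escaping transformations; $\mathcal{E}^-$ is open because $\phi\in\mathcal{E}^-$ iff $\phi^{-1}\in\mathcal{E}^+$ and inversion is a homeomorphism of the group. Now I apply a connectedness argument: $\mathcal{E}^+$ and $\mathcal{E}^-$ are disjoint (future- and past-escaping are mutually exclusive by Theorem~\ref{thm:escap}) open sets, and $s\mapsto\Psi_s$ is continuous from the connected interval $(0,\infty)$ into $\mathcal{E}^+\sqcup\mathcal{E}^-$; its image therefore lies entirely in one piece, and since $\Psi_1\in\mathcal{E}^+$ we conclude $\Psi_s\in\mathcal{E}^+$ for all $s>0$.

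I expect the main obstacle to be the equivalence $\mathcal{E}^+=\bigcup_J U_J$, specifically verifying that the tiling argument from the proof of Theorem~\ref{thm:escap} runs verbatim starting from the hypothesis that $\Sigma_0$ and $\phi^J(\Sigma_0)$ are totally timelike connected (rather than from the a priori weaker escaping hypothesis used there). Once that is in place, everything else reduces to openness of $\ll$, the tube lemma over the compact surface, and connectedness of $(0,\infty)$.
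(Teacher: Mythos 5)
Your proposal is correct. For part (i) it follows essentially the paper's own route: conjugation invariance is the same one-line computation, and your openness argument is the paper's argument in more explicit packaging. The paper, too, reduces openness to the fact that if $\Sigma_0$ and $\chi(\Sigma_0)$ are totally timelike connected then $\chi$ is future-escaping, and obtains the neighbourhood from openness of $\ll$ plus compactness of $\Sigma_0$ (taking a neighbourhood $\mathcal{U}$ of the power $\psi=\phi^N$ and pulling it back under $\theta\mapsto\theta^N$, rather than your direct tube-lemma over $\Sigma_0\times\Sigma_0$). Your explicit identity $\mathcal{E}^+=\bigcup_J U_J$, and your check that the tiling \eqref{eq:union} from the proof of Theorem~\ref{thm:escap} runs from the total-timelike-connectedness hypothesis alone, is precisely the point the paper leaves implicit; you were right to flag it as the crux, and it does go through, since that part of the proof of Theorem~\ref{thm:escap} uses only the NOH, global hyperbolicity, causality, and the fact that $\psi$ maps Cauchy surfaces to Cauchy surfaces.

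For part (ii) you take a genuinely different route, and it is worth comparing. The paper argues that all positive rational powers $\Psi_{k/l}$ are future-escaping, that these are dense in $\Psi_{(0,\infty)}$, and then invokes openness of $\mathcal{E}^+$. As literally stated this inference is incomplete: an open set containing a dense subset of a curve need not contain the whole curve. Your argument supplies exactly the missing ingredient. By ruling out the non-escaping alternative for every $\Psi_s$, $s>0$ (via relative compactness of the full one-parameter orbit, contradicting $\tau(\Psi_1^n(p))\to+\infty$ from Theorem~\ref{thm:intro1}), you place $\Psi_{(0,\infty)}$ inside the disjoint union of the two open sets $\mathcal{E}^+$ and $\mathcal{E}^-$, and then connectedness of $(0,\infty)$ finishes the proof; the rational-powers observation becomes unnecessary. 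What your approach buys is a complete dichotomy-plus-connectedness argument; what it costs is the extra appeal to the mutual exclusivity statements of Theorem~\ref{thm:escap} and to Theorem~\ref{thm:intro1}(i$^+$), both of which are available at this point in the paper, so the cost is nil.
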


\begin{proof}
 (i) That $\mathcal{E}^+$ is invariant under conjugation is natural, but we provide a short proof. Suppose $\phi = \chi \circ \psi \circ \chi^{-1}$ for $\chi,\psi \in \Conf^\uparrow(M,g)$. If $\psi \in \mathcal{E}^+$, then there is a $J \in \N$ such that $p \ll \psi^J (p)$ for all $p \in M$. Since $\chi^{-1} \circ \phi^J = \psi^J \circ \chi^{-1}$, it follows that $\chi^{-1} (p) \ll \chi^{-1} \circ \phi^J (p)$. Because $\chi$ is conformal, it follows that $p \ll \phi^{J}(p)$.

 That $\mathcal{E}^+$ is open follows from careful analysis of the proof of Theorem~\ref{thm:escap}. There we showed that for each $\phi \in \mathcal{E}^+$, there is some power $\psi = \phi^N$ and a Cauchy surface $\Sigma_0$ such that $\Sigma_0$ and $\psi(\Sigma_0)$ are totally timelike connected. Since the relation $\ll$ is open and $\Sigma_0$ is compact, there is a neighborhood $U \subset M$ of $\psi(\Sigma_0)$ such that $\Sigma_0$ and $U$ are totally timelike connected. Therefore, there is a neighborhood $\mathcal{U} \subset \Conf^\uparrow(M,g)$ of $\psi$ such that $\Sigma_0$ and $\chi(\Sigma_0) \subset U$ are totally timelike connected, for every $\chi \in \mathcal{U}$ (hence $\chi$ is future escaping). Moreover, there is a neighborhood $\mathcal{V} \subset \Conf^\uparrow(M,g)$ of $\phi$ such that $\theta^N \in \mathcal{U}$ for all $\theta \in \mathcal{V}$. It follows that $\mathcal{V} \subset \mathcal{E}^+$, so $\mathcal{E}^+$ is open.

 (ii) If $\Psi_1$ is future-escaping, then so are all its positive rational powers $\Psi_1^{k/l} = \Psi_{k/l}$. These form a dense subset of $\Psi_{(0,\infty)}$, and since being future-escaping is an open condition , it follows that $\Psi_{(0,\infty)} \subset \mathcal{E}^+$.
\end{proof}

We extract the following consequences from Proposition~\ref{prop:conj}:
\begin{itemize}
 \item Also the set of past-escaping elements is open and invariant under conjugation. Therefore, the set of non-escaping transformations is closed and invariant under conjugation.
 \item If non-escaping elements form a subgroup, then it is a normal Lie subgroup. However, Example~\ref{ex:notsubgroup} below shows that non-escaping elements do not always form a subgroup. (Escaping elements clearly do not form a subgroup.)
 \item It follows from (ii) that the notions of escaping and non-escaping are well-defined on the Lie algebra of $\Conf^\uparrow(M,g)$. Future-, past-, and non-escaping elements each form a cone in the Lie algebra.
\end{itemize}
An interesting question, which lies beyond the scope of this article, is to characterize the cones of (non)-escaping elements in the Lie algebra. This is non-trivial even in concrete examples (such as the Einstein static universe appearing in the next section). Cone structures on Lie algebras, and the ensuing ``causal'' orders on Lie groups and their symmetric spaces, have been extensively studied (see e.g.\ the book \cite{HiOl}).


\section{The Einstein static universe}

Here, we apply the results of the previous section to the Einstein static universe. In particular, we study how the dichotomy between escaping and non-escaping elements compares to the Jordan decomposition of a matrix into elliptic, hyperbolic and parabolic factors. The Einstein static universe is perhaps the simplest example of causal spacetime satisfying the NOH. But precisely because it is so symmetric, it has a very large conformal group, so in that sense, it is the richest example. We do not claim novelty of all lemmas in this section, which we prove to keep the paper self-contained. Our main new contribution is Theorem~\ref{thm:classification} below.

\subsection{Description of the Einstein universes}

Let $\R^{2,n+1}$ be the pseudo-Euclidean space of signature $(2,n+1)$, and let $\Lambda \subset \R^{2,n+1}$ be the isotropic (or null) cone, i.e.\ the set of vectors with vanishing pseudo-norm. By (compact) \emph{Einstein universe}, denoted $\Eins^{1,n}$, we mean the quotient of $\Lambda \setminus  \{0\}$ by the relation
\begin{equation} \label{eq:equivrel}
 v \sim w \colon \iff \exists c \in (0,\infty) \text{ such that } v = cw.
\end{equation}
The metric of the ambient $\R^{2,n+1}$ defines a class of conformally equivalent Lorentzian metrics on $\Eins^{1,n}$ (below, we choose a canonical representative). The action of $\OG(2,n+1)$ on $\R^{2,n+1}$ thus descends to an action on $\Eins^{1,n}$ by conformal transformations. It is well-known that, in fact, $\Conf(\Eins^{1,n}) = \OG(2,n+1)$. Note that many authors call Einstein universe the quotient \eqref{eq:equivrel} but with $c \in \R \setminus \{0\}$, the space that we define being a double cover thereof. We shall also use the notation $\Conf^\uparrow(\Eins^{1,n}) = \OG^\uparrow(2,n+1)$.

Let us explicitly construct the quotient. Denote points in $\R^{2,n+1}$ by $(x,y,z)$, where $x,y \in \R$ and $z \in \R^{n+1}$. The condition $(x,y,z) \in \Lambda$ corresponds to
\begin{equation*}
 -x^2-y^2+\Vert z \Vert^2 = 0,
\end{equation*}
and one can fix a representative for each equivalence class of \eqref{eq:equivrel} by imposing
\begin{equation*}
 x^2+y^2+\Vert z \Vert^2 = 2.
\end{equation*}
Combining the above implies that $(x,y) \in \bS^1$ and $z \in \bS^n$, not only as sets, but also in the metric sense (but with negative definite metric on $\bS^1$). We get from this a canonical choice of metric in the conformal class of $\Eins^{1,n}$, namely
\begin{equation*}
 g := -d\theta^2 + g_{\bS^n},
\end{equation*}
where $\theta$ is the coordinate on $\bS^1$ and $g_{\bS^n}$ is the round metric on the $n$-sphere.

The \emph{Einstein static universe} $\hEins^{1,n}$ is the cyclic cover of $\Eins^{1,n}$ obtained by covering the $\bS^1$ factor by $\R$, therefore it is isometric to $\R \times \bS^n$ with metric
\begin{equation*}
 \hat{g} = -dt^2 + g_{\bS^n}.
\end{equation*}
It coincides with the universal cover of $\Eins^{1,n}$ when $n \geq 2$. As a spatially compact product spacetime, $\hEins^{1,n}$ is globally hyperbolic. It is known (see e.g.\ \cite[Prop.~3.6]{GaZe} for a proof) that the group of time-orientation preserving isometries is
\begin{equation*}
 \Isom^\uparrow(\hEins^{1,n}) = \R \times \OG(n+1),
\end{equation*}
where $\R$ acts by time translations, and $\OG(n+1)$ by symmetries of the $n$-sphere $\bS^n$.

The covering map $\pi \colon \hEins^{1,n} \to \Eins^{1,n}$ induces a map of groups
\begin{equation*}
 \pr \colon \R \times \OG(n+1) \longrightarrow \SO(2) \times \OG(n+1),
\end{equation*}
which preserves $\OG(n+1)$ and restricts to the usual covering $\R \to \SO(2)$. Note, additionally, that $\SO(2) \times \OG(n+1)$ is a maximal compact subgroup of $\OG^\uparrow(2,n+1)$ \cite[App.~C]{Knapp}.

   \subsection{Structure of $\Conf(\Eins^{1,n})$ and $\Conf(\hEins^{1,n})$}

   \begin{proposition} \label{prop:cover}
    The group $\Conf(\Eins^{1,n})$ equals $\OG(2,n+1)$. The group $\Conf(\hEins^{1,n})$ is a central extension thereof, denoted by $\hOG(2,n+1)$, and given by the short exact sequence
    \begin{equation*}
     1 \longrightarrow 2 \pi \Z \overset{\iota} \longrightarrow \hOG(2,n+1) \overset{\pr} \longrightarrow \OG(2,n+1) \longrightarrow 1.
    \end{equation*}
   Here $\iota(2\pi)$ is the conformal transformation of  $\hEins^{1,n} = \R \times \bS^n$ that maps $(t,x) \mapsto (t+2\pi,x)$.
   \end{proposition}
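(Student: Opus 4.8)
The plan is to derive both assertions from covering-space theory, taking $\Conf(\Eins^{1,n})=\OG(2,n+1)$ as the known input. That identity is a Liouville-type rigidity statement: the linear action of $\OG(2,n+1)$ on $\R^{2,n+1}$ preserves the null cone and commutes with positive rescaling, so it descends to conformal transformations of $\Eins^{1,n}$, and conversely every conformal transformation of the conformally flat closed manifold $\Eins^{1,n}$ arises this way (the references cited in the text). I would then fix the covering-theoretic setup: the projection $\pi\colon\hEins^{1,n}\to\Eins^{1,n}$ is a local conformal diffeomorphism whose deck group $\Gamma$ is infinite cyclic, generated by $T\colon(t,x)\mapsto(t+2\pi,x)$. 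Since deck transformations are conformal, this yields the inclusion $\iota\colon 2\pi\Z\hookrightarrow\Conf(\hEins^{1,n})$ with $\iota(2\pi k)=T^{k}$.

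Next I would construct $\pr$ and establish exactness. Because $\hEins^{1,n}$ is the universal cover when $n\ge2$, any $\phi\in\Conf(\Eins^{1,n})$ lifts to a homeomorphism of $\hEins^{1,n}$; as $\pi$ is a local conformal isometry the lift is automatically conformal, so $\pr$ maps onto $\OG(2,n+1)$. A conformal $\tilde\phi$ with $\pi\circ\tilde\phi=\pi$ is a deck transformation, so $\ker\pr=\iota(2\pi\Z)$. This already produces the short exact sequence on the subgroup $N\subseteq\Conf(\hEins^{1,n})$ of transformations that descend (equivalently, normalise $\Gamma$); the genuine content is to prove $N=\Conf(\hEins^{1,n})$ and that $\iota(2\pi\Z)$ is central.

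For the identity component I would argue on Lie algebras. The space of conformal Killing fields of any Lorentzian manifold of dimension $n+1\ge3$ has dimension at most $\binom{n+3}{2}=\dim\mathfrak{o}(2,n+1)$. Pulling conformal Killing fields back along $\pi$ embeds the conformal algebra of $\Eins^{1,n}$, which has exactly this dimension, into the $\Gamma$-invariant (that is, $2\pi$-periodic) conformal Killing fields of $\hEins^{1,n}$. Equality of dimensions forces \emph{every} conformal Killing field of $\hEins^{1,n}$ to be $\Gamma$-invariant, whence $\Conf^{0}(\hEins^{1,n})=N^{0}$ and every element of the identity component descends. Centrality of $\Gamma$ in the identity component is then automatic, since $\Gamma$ is the kernel of the covering homomorphism $\Conf^{0}(\hEins^{1,n})\to\SO^{0}(2,n+1)$, and the kernel of a covering of connected Lie groups is central.

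The main obstacle will be to promote these statements from the identity component to the full group, i.e.\ to show that conjugation by an arbitrary $\tilde\phi\in\Conf(\hEins^{1,n})$ still preserves $\Gamma$. Since $T$ centralises $\Conf^{0}(\hEins^{1,n})$ it lies in $\ker(\mathrm{Ad})=Z_{\Conf(\hEins^{1,n})}(\Conf^{0}(\hEins^{1,n}))$, so every conjugate $\tilde\phi\,T\,\tilde\phi^{-1}$ does too; because $\mathfrak{o}(2,n+1)$ is semisimple this centraliser is discrete and virtually infinite cyclic, and the crux is to identify it with $\Gamma$ by computing the finite center of $\Conf^{0}(\hEins^{1,n})$ and ruling out centralisers in the reflection components of $\OG(2,n+1)$. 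Here Proposition~\ref{prop:conj} is the decisive tool: $T$ is future-escaping, so a time-orientation preserving $\tilde\phi$ keeps $\tilde\phi\,T\,\tilde\phi^{-1}$ future-escaping, while a time-reversing $\tilde\phi$ produces the past-escaping $T^{-1}$. In either case the conjugate is a generator of $\Gamma$, which gives normality of $\Gamma$ and hence the short exact sequence for the full group, with $\iota(2\pi\Z)$ central in $\hOG^{0}(2,n+1)$; the conjugation action on $\Gamma\cong\Z$ then factors through the time-orientation character $\OG(2,n+1)\to\{\pm1\}$.
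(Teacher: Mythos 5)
Your route is genuinely different from the paper's proof, which is essentially a citation plus one computation: the paper takes the identification $\Conf(\Eins^{1,n})=\OG(2,n+1)$ as known, \emph{asserts} that $T:=T_{2\pi}$ is central in $\Conf(\hEins^{1,n})$ (so that every conformal transformation descends, defining $\pr$), and only proves exactness at the kernel via the connectedness argument for $k(t,x)$. You instead attempt to prove the descent/centrality itself. Your identity-component argument is correct and is a real proof of something the paper leaves unproved: the bound $\binom{n+3}{2}$ on the dimension of the conformal algebra in dimension $n+1\geq 3$, together with injectivity of the pullback along $\pi$, forces every conformal Killing field of $\hEins^{1,n}$ to be $\Gamma$-invariant, hence $\Conf^0(\hEins^{1,n})$ centralizes $\Gamma$ and descends.

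The step you yourself call the crux, however, contains a genuine gap. You reduce to showing that $\tilde\phi T\tilde\phi^{-1}$, which lies in the discrete centralizer $Z:=Z_{\Conf(\hEins^{1,n})}\bigl(\Conf^0(\hEins^{1,n})\bigr)$, is a generator of $\Gamma$, and you propose to conclude this from Proposition~\ref{prop:conj} (conjugates of future-escaping elements are future-escaping). That cannot work, because $Z$ is strictly larger than $\Gamma$ and contains many future-escaping elements that are not generators of $\Gamma$. Concretely, $S\colon (t,x)\mapsto (t+\pi,-x)$ lies in $Z$: it covers the scalar $-I\in\OG(2,n+1)$, whose adjoint action is trivial, so $S$ fixes every conformal Killing field of $\hEins^{1,n}$; moreover $S^2=T$. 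A short computation (equivariance under rotations and time translations, plus Schur's lemma) gives $Z=\langle S\rangle\cong\Z$, so $\Gamma$ has index $2$ in $Z$ and your plan to ``identify the centraliser with $\Gamma$'' is impossible; and since $S$, $T^2$, $S^3,\dots$ are all future-escaping, knowing that $\tilde\phi T\tilde\phi^{-1}$ is a future-escaping element of $Z$ does not single out $T$. The repair is short but is a different idea: $Z$ is canonically attached to $\Conf^0$, hence invariant under conjugation by \emph{every} element of $\Conf(\hEins^{1,n})$; as $Z\cong\Z$ and $\Aut(\Z)=\{\pm 1\}$, conjugation sends $S\mapsto S^{\pm1}$ and therefore $T=S^2\mapsto T^{\pm1}$. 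Only at this point does the escaping argument enter, fixing the sign to $+1$ when $\tilde\phi$ preserves time orientation. (Alternatively, one can use the conformally invariant refocusing map implicit in Proposition~\ref{prop:fixed}: $T$ is the square of the map sending each point to the first common focal point of its future null geodesics, which manifestly commutes with every time-orientation preserving conformal transformation.) Finally, your closing observation is the accurate form of the statement: time reversal $(t,x)\mapsto(-t,x)$ conjugates $T$ to $T^{-1}$, so $\iota(2\pi\Z)$ is central only after restricting to $\OG^\uparrow(2,n+1)$ --- a point on which the paper's own proof, which asserts centrality of $T_{2\pi}$ in all of $\Conf(\hEins^{1,n})$, is imprecise.
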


   \begin{proof}
    This is well-known, see e.g.\ \cite{EMN}. Note that by construction of $\Eins^{1,n}$, it is easy to see that $\OG(2,n+1)$ acts by conformal transformations on $\Eins^{1,n}$, since the action of $\OG(2,n+1)$ on $\R^{2,n+1}$ preserves the isotropic cone. For illustrative purposes, we prove the second part. We view the compact Einstein universe $\Eins^{1,n} \cong \bS^1 \times \bS^n$ as the quotient of the Einstein static universe $\hEins^{1,n} \cong \R \times \bS^n$ by the isometry $T_{2\pi} \colon (t,x) \mapsto (t+2\pi,x)$. Because $T_{2\pi}$ is in the center of $\Conf(\hEins^{1,n})$, it is easy to see that every conformal transformation of $\hEins^{1,n}$ descends to the quotient. This defines the group homomorphism $\pr$. Suppose that $\phi \in \Conf(\hEins^{1,n})$ is such that $\pr(\phi) = 1$. Then, $\phi$ must act by $(t,x) \mapsto (t+2\pi k(t,x),x)$, where $k(t,x) \in \Z$. By continuity and connectedness, $k(t,x)$ must be constant, and $\phi = (T_{2\pi})^k$. This proves that $\iota(2\pi\Z) = \ker(\pr)$, so the above sequence is exact.
   \end{proof}

  Recall the ``algebraic'' definitions for matrices $A \in \GL(n,\R)$:

  \begin{itemize}
   \item $A$ is hyperbolic if it is diagonalizable with all eigenvalues real and positive.
   \item $A$ is parabolic if it is unipotent, i.e.\ if $A - 1$ is nilpotent.
   \item $A$ is elliptic if it is diagonalizable over $\mathbb{C}$ and all its eigenvalues have modulus $1$.
  \end{itemize}

   \begin{proposition}[Jordan decomposition in $\OG(2,n+1)$] \label{prop:JordanO}
    Every element $A \in \OG(2,n+1)$ can be written as a commuting product $A = E H P$, with $E$ elliptic, $H$ hyperbolic, and $P$ parabolic. Here $E,H,P \in \OG(2,n+1)$.
   \end{proposition}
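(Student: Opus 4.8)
The plan is to obtain the decomposition first at the level of $\GL(n+3,\R)=\GL(\R^{2,n+1})$ by the standard (complete) multiplicative Jordan decomposition, and then to check that, because $A$ preserves the quadratic form, each of the three factors does as well. Concretely, I would start by recalling that every invertible real matrix $A$ admits a unique decomposition $A=EHP$ into pairwise commuting factors, where $E$ is elliptic, $H$ is hyperbolic and $P$ is parabolic in the sense defined above. This is a purely linear-algebraic statement: one takes the usual Jordan--Chevalley splitting $A=A_s A_u$ into a commuting product of a semisimple and a unipotent matrix, sets $P:=A_u$, and then refines $A_s$ by splitting each complex eigenvalue $\lambda$ as $\lambda=|\lambda|\cdot(\lambda/|\lambda|)$; the resulting factors $H$ (eigenvalues $|\lambda|$) and $E$ (eigenvalues $\lambda/|\lambda|$) are real because this operation commutes with complex conjugation, they commute with everything commuting with $A_s$ (in particular with $P$), and they are uniquely determined.

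The only real content is then to show that $E,H,P\in\OG(2,n+1)$. Here the key tool is the adjoint anti-automorphism $X\mapsto X^*:=Q^{-1}X^T Q$, where $Q$ is the Gram matrix of the form of signature $(2,n+1)$. I would observe three things. First, $A\in\OG(2,n+1)$ is equivalent to $A^*=A^{-1}$. Second, $X\mapsto X^*$ is an anti-automorphism of the matrix algebra, so it reverses products but preserves the property of commuting, and it preserves each of the three types, since transposition and conjugation by $Q$ leave the characteristic polynomial and the property of being diagonalizable over $\C$ (resp.\ unipotent) unchanged. Third, inversion also preserves the three types, as replacing each eigenvalue $\mu$ by $\mu^{-1}$ keeps modulus $1$, positivity, and unipotence.

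Granting these, the argument closes by uniqueness. Applying $*$ to $A=EHP$ gives $A^*=E^*H^*P^*$, a commuting product of an elliptic, a hyperbolic and a parabolic matrix, hence the complete Jordan decomposition of $A^*$. Likewise $A^{-1}=E^{-1}H^{-1}P^{-1}$ is the complete Jordan decomposition of $A^{-1}$. Since $A$ is orthogonal we have $A^*=A^{-1}$, and comparing the two decompositions factor by factor yields $E^*=E^{-1}$, $H^*=H^{-1}$ and $P^*=P^{-1}$, i.e.\ all three factors lie in $\OG(2,n+1)$.

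I expect the main obstacle to be exactly this passage from $\GL$ to $\OG(2,n+1)$: existence and uniqueness in $\GL(n+3,\R)$ is classical, so the crux is verifying that the factors are stabilized by the group. The adjoint/uniqueness argument above handles this without invoking the general machinery of real algebraic groups, which keeps the proof self-contained; the one point requiring a little care is the compatibility of the refined elliptic--hyperbolic split with complex conjugation, ensuring the factors are genuine real matrices before one even asks whether they are orthogonal.
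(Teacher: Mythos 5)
Your proof is correct, and it reaches the key step by a genuinely different route than the paper. Both arguments take as input the classical complete multiplicative Jordan decomposition $A=EHP$ in $\GL(n+3,\R)$; the difference lies in how one shows the three factors land in $\OG(2,n+1)$. The paper uses the fact (extracted from Helgason) that $E,H,P$ are polynomials in $A$, pushes everything through the representation $\rho$ of $\GL(n+3,\R)$ on the space of quadratic forms, concludes that each factor preserves the line $\R b$ spanned by the defining form, and then pins down the eigenvalue on that line type by type (unipotence forces eigenvalue $1$, ellipticity forces modulus $1$, and the hyperbolic factor is then forced as well). You instead exploit the characterization $\OG(2,n+1)=\{X : X^*=X^{-1}\}$ for the adjoint anti-automorphism $X\mapsto X^*:=Q^{-1}X^TQ$: since both $*$ and inversion preserve the three types and carry commuting products to commuting products, $E^*H^*P^*$ and $E^{-1}H^{-1}P^{-1}$ are both complete Jordan decompositions of $A^*=A^{-1}$, and uniqueness gives $E^*=E^{-1}$, $H^*=H^{-1}$, $P^*=P^{-1}$. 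What your route buys is independence from the polynomial property and from the transfer through $\rho$, which is the least transparent point of the paper's argument ($\rho$ is not linear in the matrix entries, so ``$\rho(E)$ is a polynomial in $\rho(A)$'' itself needs the naturality of the Jordan decomposition under representations). What it costs is that uniqueness of the commuting three-factor decomposition becomes load-bearing, and your sketch only outlines it; to close that gap one notes that in any commuting product $EHP$ of the three types, $EH$ is semisimple, so Jordan--Chevalley uniqueness identifies $EH$ with $A_s$ and $P$ with $A_u$, after which $E$ and $H$ are determined on each eigenspace of $A_s$ by the scalar equation they satisfy there. Like the paper's argument, yours applies verbatim to any $\OG(p,q)$.
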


   This follows from a general fact about algebraic groups \cite[Chap.~IV]{Hum}, but here we give a simpler argument for the case at hand.

   \begin{proof}
    A proof of the Jordan decomposition in the general linear group $\GL(n+3,\R)$ can be found in \cite[Sec.~IX.7]{Hel}. It follows from the proof there (including \cite[Sec.~III.1]{Hel} and the fact that $A^{-1}$ is a polynomial in $A$) that $E, H, P$ can be expressed as polynomials in $A$. We use this to prove that $E,H,P \in \OG(2,n+1)$ (as opposed to just in $\GL(n+3,\R)$). We show this more generally for $\OG(p,q)$ and, in the rest of this proof, we denote $n := p+q$.

    Let $Q$ be the space of quadratic forms on $\R^{n}$. We have a representation $\rho: \GL(n, \R) \to \GL(Q)$. Let $b$ be the quadratic form $-(x_1^2 + \ldots  + x_p^2) + (y_1^2 + \ldots + y_q^2)$. Therefore
    \begin{equation*}
     \OG(p, q) = \{A \in \GL(n, \R) \mid \rho(A)b = b\}.
    \end{equation*}
    Now suppose $A \in \times \OG(p, q) $, and $E$ is its elliptic part. Then $E$ is a polynomial in $A$, and thus $\rho(E)$ is a polynomial in $\rho(A)$. Therefore $\rho(E)$ preserves the line $\R b \subset Q$, and the same holds for $\rho(P)$ and $\rho(H)$. But $E$ being elliptic implies that, in fact, it must preserve $b$, and hence $E \in \OG(p,q)$. Similarly, $P$ being parabolic means that it has at least one eigenvector with eigenvalue $1$, so $P$ must preserve $b$ as well. Finally, this implies that also $H$ must preserve $b$, and we are done.
   \end{proof}

   We can now extend the definitions of elliptic, hyperbolic, and parabolic elements from $\OG(2,n+1)$ to $\hOG(2,n+1)$.

   \begin{definition}
    Let $\pr \colon \hOG(2,n+1) \to \OG(2,n+1)$ be as in Proposition \ref{prop:cover}. An element $\phi \in \hOG(2,n+1)$ is called
    \begin{itemize}
     \item \emph{Elliptic} if $\pr(\phi)$ is elliptic,
     \item \emph{Hyperbolic} if $\phi$ has a fixed point and $\pr(\phi)$ is hyperbolic,
     \item \emph{Parabolic} if $\phi$ has a fixed point and $\pr(\phi)$ is parabolic.
    \end{itemize}
   \end{definition}

   The motivation for requiring hyperbolic and parabolic elements to have fixed points is that we want to view non-trivial time translations $(t,x) \mapsto (t+c,x)$ as elliptic, but not hyperbolic or parabolic (even when $c=2\pi k$, in which case $\pr(\phi) = 1$). We will see in Section~\ref{sec:hyperpara} below that hyperbolic and parabolic elements in $\OG(2,n+1)$ have fixed points in $\Eins^{1,n}$, so it makes sense to require this.



   \subsection{Elliptic elements}

   \begin{lemma} \label{lem:ell}
    An element $E \in \OG(2,n+1)$ is elliptic if and only if $E$ is conjugate to an element of the subgroup $\OG(2) \times \OG(n+1)$.
   \end{lemma}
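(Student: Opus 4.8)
The plan is to reduce both implications to two standard facts: that $\OG(2)\times\OG(n+1)$ is a maximal compact subgroup of $\OG(2,n+1)$, and that any two maximal compact subgroups of a Lie group with finitely many components are conjugate. Throughout, ``conjugate'' is understood within $\OG(2,n+1)$, which is the natural reading of the statement and also the stronger conclusion.

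For the implication ($\Leftarrow$), suppose $E$ is conjugate to some $E' \in \OG(2)\times\OG(n+1)$. Since this subgroup is compact, it preserves a positive-definite inner product on $\R^{n+3}$, so $E'$ is orthogonal for that inner product and hence diagonalizable over $\C$ with all eigenvalues of modulus $1$. Because eigenvalues are invariant under conjugation, $E$ is elliptic as well. This direction is essentially free.

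For the implication ($\Rightarrow$), suppose $E$ is elliptic. First I would check that the cyclic group $\langle E\rangle$ has compact closure: writing $E = P D P^{-1}$ with $D$ diagonal and all diagonal entries of modulus $1$, the powers $E^k = P D^k P^{-1}$ are uniformly bounded in norm, so $\{E^k : k \in \Z\}$ is a bounded subset of $\GL(n+3,\R)$. As $\OG(2,n+1)$ is closed in $\GL(n+3,\R)$, the closure $\overline{\langle E\rangle}$ is a closed and bounded, hence compact, subgroup. Then I would invoke the conjugacy of maximal compact subgroups (valid since $\OG(2,n+1)$ has finitely many components): every compact subgroup is contained in a maximal compact one, and all of these are conjugate to $\OG(2)\times\OG(n+1)$ (compare the remark after Proposition~\ref{prop:cover} and \cite{Knapp}). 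Hence there is $g \in \OG(2,n+1)$ with $\overline{\langle E\rangle} \subseteq g\,(\OG(2)\times\OG(n+1))\,g^{-1}$, and in particular $g^{-1} E g \in \OG(2)\times\OG(n+1)$, which is exactly the claim.

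The main point to be careful about is that the conjugating element must lie in $\OG(2,n+1)$ and not merely in $\GL(n+3,\R)$; this is precisely what the conjugacy of maximal compact subgroups provides, and it is the reason for passing through the compact group $\overline{\langle E\rangle}$ rather than just diagonalizing. A second point is to use the correct maximal compact subgroup of the \emph{full} group $\OG(2,n+1)$, namely $\OG(2)\times\OG(n+1)$, rather than the $\SO(2)\times\OG(n+1)$ that appears for $\OG^\uparrow(2,n+1)$.
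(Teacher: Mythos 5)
Your proof is correct and follows essentially the same route as the paper: for the forward direction, both arguments bound the powers $E^k$ via diagonalization, pass to the compact closure $\overline{\langle E\rangle} \subset \OG(2,n+1)$, and invoke the conjugacy of maximal compact subgroups to land in $\OG(2)\times\OG(n+1)$; for the converse, your inner-product argument is just the spelled-out version of the paper's appeal to the fact that elements of compact (orthogonal) groups are elliptic. Your explicit remarks on why the conjugating element lies in $\OG(2,n+1)$ and on closedness of $\OG(2,n+1)$ in the matrix algebra are sound refinements of points the paper leaves implicit.
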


   \begin{proof}
    Suppose $E$ is elliptic. If we view $E$ as an element in $\Mat(n+3, \C)$, then there is $B$ invertible and $D$ diagonal such that $E = B^{-1} D B$, and by definition of ellipticity, all diagonal entries of $D$ have modulus $1$. Hence, for the inner product on $\C^{n+3}$ represented by $B^T B$, it holds that $E$ and all its powers $E^k = B^{-1} D^k B$ have operator norm equal to $1$.

    Let $\Gamma = \{ E^k \mid k \in \Z \}$. By the previous discussion, its closure $\overline{\Gamma}$ is a compact subset of $\Mat(n+3, \C)$. But, in fact, $\overline{\Gamma}$ is a subgroup of $\OG(2,n+1)$. Because $\OG(2) \times \OG(n+1)$ is a maximal compact subgroup of $\OG(2,n+1)$, it follows that $\overline{\Gamma}$ is, up to conjugation, a subgroup of $\OG(2) \times \OG(n+1)$ (see \cite[Sec.~IV.2]{Hel} and \cite[App.~C]{Knapp}). This proves one direction.

    The converse follows from the standard fact that all elements of the orthogonal group $\OG(m)$ are elliptic.
   \end{proof}

    \begin{lemma} \label{lem:ellincover}
    Let $\phi \in \hOG^\uparrow(2,n+1)$ be elliptic and time-orientation preserving. The following are equivalent:
    \begin{enumerate}
     \item $\phi$ is non-escaping,
     \item $\pr (\phi)$ is conjugate to an element $E \in \OG(n+1)$,
     \item $\phi$ preserves a time function.
    \end{enumerate}
    \end{lemma}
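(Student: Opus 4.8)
The plan is to conjugate $\phi$ into an explicit normal form, exploiting that all three conditions are conjugation-invariant: (i) because being (non-)escaping is preserved under conjugacy (Proposition~\ref{prop:conj}(i)), (iii) because if $\chi\phi\chi^{-1}$ preserves a time function $\tau$ then $\phi$ preserves $\tau\circ\chi$, and (ii) tautologically. Since $\phi$ is elliptic and time-orientation preserving, the closure of $\langle\pr(\phi)\rangle$ has bounded powers (as in the proof of Lemma~\ref{lem:ell}) and is thus a compact subgroup of $\OG^\uparrow(2,n+1)$; by Lemma~\ref{lem:ell} together with the maximality of $\SO(2)\times\OG(n+1)$ in $\OG^\uparrow(2,n+1)$, it is conjugate into that maximal compact. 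Lifting the conjugating element through the exact sequence of Proposition~\ref{prop:cover}, I may assume that $\phi$ acts on $\hEins^{1,n}=\R\times\bS^n$ by
\[
 \phi(t,x)=(t+\alpha,\,Ex),\qquad \alpha\in\R,\ E\in\OG(n+1).
\]
The lemma then reduces to showing that each of (i), (ii), (iii) is equivalent to $\alpha=0$.

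Next I would settle the two ``geometric'' conditions. The implication (iii)$\Rightarrow$(i) needs no normal form: if $\phi$ preserves a Cauchy time function $\tau$, every orbit lies in a single level set $\tau^{-1}(c)$, which is a compact Cauchy surface by Lemma~\ref{lem:NOHCauchy}, so $\phi$ is non-escaping. In the normal form, $\alpha=0$ makes $\phi(t,x)=(t,Ex)$ preserve $\tau=t$, giving (iii), while every orbit stays in the compact slice $\{t\}\times\bS^n$, giving (i). Conversely, if $\alpha\neq0$ then the $t$-coordinate of $\phi^m(t,x)=(t+m\alpha,E^mx)$ tends to $\pm\infty$, so $\phi$ is escaping; moreover $d_{\bS^n}(x,E^mx)\le\pi<|m\alpha|$ for large $m$ gives $p\ll\phi^m(p)$ (or $\phi^m(p)\ll p$), so any preserved time function would have to increase (resp.\ decrease) strictly along the orbit, contradicting $\tau\circ\phi=\tau$. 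This yields (i)$\Leftrightarrow$(iii)$\Leftrightarrow[\alpha=0]$.

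There remains (ii). The easy direction is (i)$\Rightarrow$(ii): when $\alpha=0$ the normal form is literally $E\in\OG(n+1)$, so $\pr(\phi)$ is conjugate to $E\in\OG(n+1)$. For the converse I would examine $\pr(\phi)=(\beta,E)$, where $\beta=\alpha\bmod 2\pi$ acts as a rotation of the negative-definite time-plane $\R^{2,0}\subset\R^{2,n+1}$. An element of $\OG(n+1)$ fixes this plane pointwise, whereas for $\beta\neq0$ the eigenvalues $e^{\pm i\beta}$ of $\pr(\phi)$ live on a negative-definite invariant $2$-plane. Since conjugation in $\OG(2,n+1)$ preserves the signature of the form restricted to each real (generalized) eigenspace, conjugacy into $\OG(n+1)$ forces $\beta=0$, i.e.\ $\alpha\in2\pi\Z$.

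The main obstacle is exactly this last reduction. A conjugacy performed downstairs in $\OG(2,n+1)$ controls $\alpha$ only modulo $2\pi$, so the signature argument gives $\alpha\in2\pi\Z$ rather than $\alpha=0$, and the nonzero central translations $\phi(t,x)=(t+2\pi k,Ex)$ are genuinely escaping. To close the equivalence I would therefore read the conjugacy in (ii) at the level of $\hOG^\uparrow(2,n+1)$: lift a downstairs conjugacy into $\OG(n+1)$ to an upstairs conjugacy carrying $\phi$ to a lift $(t,x)\mapsto(t+2\pi k,Ex)$ of an $\OG(n+1)$-element, and then use the conjugation-invariance of (non-)escaping (Proposition~\ref{prop:conj}) to rule out $k\neq0$. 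Making this lifting step precise—ensuring that ``conjugate into $\OG(n+1)$'' is understood in $\hOG^\uparrow$ and not merely after applying $\pr$—is the delicate point on which the whole equivalence hinges.
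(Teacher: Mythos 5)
Your proposal is correct and follows essentially the same route as the paper's own proof: conjugate $\pr(\phi)$ into the maximal compact $\OG(2)\times\OG(n+1)$ via Lemma~\ref{lem:ell}, lift the conjugating element through the exact sequence of Proposition~\ref{prop:cover} to reach the normal form $(t,x)\mapsto(t+\alpha,Ex)$, and exploit the conjugation-invariance of (non-)escaping from Proposition~\ref{prop:conj}.

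What deserves emphasis is that the ``delicate point'' you isolate at the end is not a weakness of your argument but a genuine gap in the lemma as literally stated, and you have located it exactly. With the downstairs reading of (ii) (conjugacy of $\pr(\phi)$ inside $\OG(2,n+1)$), the implication (ii)$\Rightarrow$(i) is false: the deck transformation $\phi(t,x)=(t+2\pi,x)$ is time-orientation preserving, elliptic in the paper's sense (its projection is the identity --- and the paper explicitly insists that central time translations count as elliptic), and future-escaping, yet $\pr(\phi)=1\in\OG(n+1)$ satisfies (ii) trivially while (i) and (iii) fail. The paper's own proof commits precisely the oversight you warn against: in the step ``(ii)$\iff$(iii)'' it infers from $\pr(\phi_E)\in\OG(n+1)$ that $\phi_E$ preserves the canonical time coordinate $t$, silently discarding the lift ambiguity $\phi_E(t,x)=(t+2\pi k,Ex)$, $k\in\Z$; and of the equivalence (i)$\iff$(ii) only the direction (i)$\Rightarrow$(ii) is actually argued there. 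Your repair --- reading (ii) as conjugacy in $\hOG^\uparrow(2,n+1)$ to the canonical ($k=0$) lift of an element of $\OG(n+1)$, with $k\neq 0$ excluded under (i) by Proposition~\ref{prop:conj} because those lifts are escaping --- is the correct and evidently intended formulation, and with it your cycle of implications closes. Your signature argument for the downstairs step (the invariant $2$-plane carrying the eigenvalues $e^{\pm i\beta}$ is negative definite when $\beta\neq0$, but would have to be positive definite for an element of $\OG(n+1)$, and conjugation in $\OG(2,n+1)$ preserves this signature) is sound and correctly yields only $\alpha\in2\pi\Z$. One minor point: in your first (iii)$\Rightarrow$(i) argument you assume the preserved time function is Cauchy, which (iii) does not grant; but this is harmless, since your normal-form argument --- or simply the fact that an escaping element satisfies $q\ll\phi^J(q)$ everywhere and hence can preserve no time function whatsoever --- covers the general case.
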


    \begin{proof}
     (i) $\iff$ (ii). By Lemma~\ref{lem:ell} above, $\phi$ being elliptic implies that
    \begin{equation*}
     \pr \phi = A E A^{-1}
    \end{equation*}
    for $A \in \OG(2,n+1)$ and $E = (E_1,E_2) \in \OG(2) \times \OG(n+1)$. Let $\psi \in \pr^{-1}(A)$ and $\phi_E := \psi \circ \phi \circ \psi^{-1}$. Then $\pr (\phi_E) = E$, and by Proposition~ \ref{prop:conj}, $\phi_E$ is non-escaping. This is only possible if $E_1 = 1$, since otherwise $E_1$ would lift to either a time translation in $\hOG(2,n+1)$ (in which case $\phi_E$ would be escaping), or to a time-orientation reversing transformation (which we have excluded by assumption).

    (ii) $\iff$ (iii). As discussed above, $\phi = \psi^{-1} \circ \phi_E \circ \psi$, where $\pr(\phi_E) \in \OG(n+1)$. Hence $\phi_E$ preserves the canonical time coordinate $t$ on $\hEins^{1,n} = \R \times \bS^n$, and therefore $\phi$ preserves the time function $t \circ \psi$ (note that being a time function is preserved by composition with a  time-orientation preserving conformal transformation).
    \end{proof}

    \begin{remark} \label{rem:ellproper}
     If $\phi \in \hOG^\uparrow(2,n+1)$ is elliptic, then the closure of the subgroup generated by $\phi$ acts properly. This is because escaping elements always act properly, while every non-escaping elliptic element is part of some compact subgroup. It follows by \cite[Thm.~3.2]{GaZe} that if $\phi$ is escaping, then $\phi$ preserves the differential of a temporal function, and hence the foliation by its level sets (but not each individual level set).
    \end{remark}

   \subsection{Hyperbolic and parabolic elements} \label{sec:hyperpara}

   \begin{lemma} \label{lem:hyper}
    Let $H \in \OG(2,n+1)$ be hyperbolic. Then, there is an eigenbasis of $\R^{2, n+1}$, where the pseudo-scalar product has the form
    \begin{equation*}
     2xy + 2 zt + w_1^2+ \ldots + w_{n-1}^2,
    \end{equation*}
    and $H$ has eigenvalues $(\lambda_1, \lambda_1^{-1}, \lambda_2, \lambda_2^{-1}, 1, \ldots 1)$.
   \end{lemma}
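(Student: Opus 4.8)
The plan is to read off the normal form from the eigenspace decomposition of $H$, combined with the orthogonality constraints that membership in $\OG(2,n+1)$ imposes. Since $H$ is hyperbolic it is diagonalizable over $\R$ with positive real eigenvalues, so $\R^{2,n+1} = \bigoplus_{\lambda} E_\lambda$ with $E_\lambda = \ker(H - \lambda\,\Id)$. For the ambient pseudo-scalar product $\langle\cdot,\cdot\rangle$ of signature $(2,n+1)$, the invariance $\langle Hu,Hv\rangle = \langle u,v\rangle$ yields $(\lambda\mu - 1)\langle u,v\rangle = 0$ for $u \in E_\lambda$ and $v \in E_\mu$. Hence $E_\lambda \perp E_\mu$ whenever $\lambda\mu \neq 1$; in particular each $E_\lambda$ with $\lambda \neq 1$ is totally isotropic, and since the form is non-degenerate, $E_\lambda$ is perfectly paired with $E_{\lambda^{-1}}$, forcing $\dim E_\lambda = \dim E_{\lambda^{-1}}$.

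Next I would carry out a signature count. For $\lambda > 1$ the subspace $V_\lambda := E_\lambda \oplus E_{\lambda^{-1}}$ is non-degenerate, and because both summands are isotropic and dually paired, the restricted form is a direct sum of $\dim E_\lambda$ hyperbolic planes, so it has signature $(\dim E_\lambda, \dim E_\lambda)$. The subspaces $V_\lambda$ for $\lambda > 1$, together with $E_1$, are mutually orthogonal and non-degenerate, and $H$ acts as the identity on $E_1$. Writing $k := \sum_{\lambda > 1} \dim E_\lambda$ and summing negative indices gives $k + (\text{negative index of } E_1) = 2$. In particular $k \leq 2$: there are at most two eigenvalues exceeding $1$, counted with multiplicity, which is the whole point of signature $(2,n+1)$.

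Finally I would assemble the eigenbasis. On each $V_\lambda$ with $\lambda > 1$, choose mutually dual bases of $E_\lambda$ and $E_{\lambda^{-1}}$; each dual pair spans a hyperbolic plane on which $H = \operatorname{diag}(\lambda,\lambda^{-1})$ and the form reads $2xy$. This supplies $k \leq 2$ hyperbolic planes with genuine eigenvalues. Since $H$ is the identity on $E_1$, every basis of $E_1$ consists of $1$-eigenvectors; as the negative index of $E_1$ equals $2 - k$, I can split off $2 - k$ further hyperbolic planes inside $E_1$ (each with a hyperbolic basis, on which $H = \Id$ and the form is $2zt$) and complete with a positive-definite orthonormal basis $w_1, \ldots, w_{n-1}$. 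Altogether this gives exactly two hyperbolic planes and an $(n-1)$-dimensional positive-definite block, of total dimension $4 + (n-1) = n+3$, i.e.\ the form $2xy + 2zt + w_1^2 + \ldots + w_{n-1}^2$, with $H$ having eigenvalues $(\lambda_1, \lambda_1^{-1}, \lambda_2, \lambda_2^{-1}, 1, \ldots, 1)$, where $\lambda_1, \lambda_2 \geq 1$ are the scales of the two planes (with $\lambda_i = 1$ allowed for the trivial planes coming from $E_1$).

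The one genuinely delicate point, and the main obstacle, is the uniform treatment of the eigenvalue-$1$ space $E_1$: one must verify that its negative index is exactly $2 - k$, so that precisely the right number of trivial hyperbolic planes can be extracted from $E_1$ to reach exactly two planes, leaving a positive-definite complement of the correct dimension $n-1$. This is where the signature hypothesis is essential and where it caps the number of non-trivial hyperbolic pairs at two; everything else reduces to standard manipulation of dual and hyperbolic bases.
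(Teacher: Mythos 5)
Your proof is correct, but it is organized differently from the paper's. You work globally with the full eigenspace decomposition $\R^{2,n+1}=\bigoplus_\lambda E_\lambda$, derive the orthogonality relation $E_\lambda\perp E_\mu$ whenever $\lambda\mu\neq 1$ from the identity $(\lambda\mu-1)\langle u,v\rangle=0$, deduce the perfect pairing $E_\lambda\leftrightarrow E_{\lambda^{-1}}$, and then cap the number of hyperbolic pairs by a signature count ($k+\mathrm{neg}(E_1)=2$), finally padding with $2-k$ trivial hyperbolic planes extracted from $E_1$. The paper instead proceeds recursively, one plane at a time: it finds a single eigenvector $e$ with eigenvalue $\lambda\neq1$, observes it must be isotropic, uses diagonalizability to split $e^\perp=\R e\oplus F$ with $F$ invariant, shows the dual generator $\tilde e$ of the Lorentzian plane $F^\perp$ is automatically an eigenvector with eigenvalue $\lambda^{-1}$, and then repeats the argument on $F$, with the signature descending $(2,n+1)\to(1,n)\to(0,n-1)$ so that the recursion stops after at most two steps and $H$ is forced to act trivially on the positive-definite remainder. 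The two arguments share the same underlying mechanism (isotropy of non-unit eigenvectors, $\lambda\leftrightarrow\lambda^{-1}$ duality, the signature bound), but yours buys a cleaner and more uniform treatment: the bound $k\leq 2$ is a transparent index count, multiplicities such as $\lambda_1=\lambda_2$ require no special handling, and the bookkeeping for the eigenvalue-$1$ block (negative index exactly $2-k$, positive-definite complement of dimension $n-1$) is explicit rather than implicit in the phrase ``the statement reduces to the well-known normal form.'' The paper's recursion, on the other hand, is more elementary per step, never needing the general eigenspace orthogonality lemma, and is tailored to the fact that the negative index is only $2$.
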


   \begin{proof}
    If all eigenvalues are $1$, then $H=1$, and the statement reduces to the well-known fact that the pseudo-scalar product has the desired form in some appropriate basis. Suppose thus that there is an eigenvector $e$ with positive eigenvalue $\lambda \neq 1$. Necessarily $e$ is isotropic (otherwise the pseudo-norm of $e$ would not be preserved by $H$). The orthogonal complement $e^\perp$ is a degenerate hyperplane. Since $H$ is diagonalizable, there is an $H$-invariant subspace $F$ such that $e^\perp = \R e \oplus F$. The restriction of the pseudo-scalar product on $F$ has Lorentzian signature, and is preserved by $H$. The orthogonal complement $F^\perp$ is a Lorentztian $2$-plane spanned by $e$ and by another vector $\tilde{e}$, which can be chosen so that the pseudo-scalar product on $F^\perp$ is given by $2 e^* \tilde{e}^*$. Because this pseudo-scalar product is preserved by $H$, necessarily $\tilde{e}^*$ must be an eigenvector with eigenvalue $\lambda^{-1}$. It remains to analyse the restriction of $H$ on $F$. This is completely analogous: either the restriction is trivial, or it has an isotropic eigenvector with eigenvalue $\lambda_2 \neq 0$, and we can repeat the same argument. Due to the signature of the pseudo-scalar product, there can be at most four eigenvalues different from $1$. At that point, the orthogonal complement to the corresponding (isotropic) eigenvectors is positive-definite, and hence $H$ must act trivially on it (because $H$ is hyperbolic).
   \end{proof}

   \begin{lemma}\label{lem:para}
    If $P \in \OG(2,n+1)$ is parabolic, then $P$ preserves some isotropic vector (i.e.\ there is an isotropic eigenvector with eigenvalue $1$). In particular, $P$ has a fixed point in $\Eins^{1,n}$.
   \end{lemma}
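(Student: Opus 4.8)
The plan is to exploit the compatibility between the unipotent structure of $P$ and the invariance of the pseudo-scalar product $b$ of signature $(2,n+1)$. Writing $P = \Id + N$, the hypothesis that $P$ is parabolic means precisely that $N$ is nilpotent, and the fixed vectors of $P$ are exactly $\ker N$. Thus the goal reduces to producing a nonzero isotropic vector in $\ker N$.

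First I would extract a compatibility relation from $b(Pu, Pv) = b(u,v)$. Expanding with $P = \Id + N$ gives
\begin{equation*}
 b(Nu, v) + b(u, Nv) + b(Nu, Nv) = 0 \qquad \text{for all } u, v \in \R^{2,n+1}.
\end{equation*}
Specializing to $v \in \ker N$ annihilates the last two terms and leaves $b(Nu, v) = 0$ for every $u$. Hence $\operatorname{Im} N \subseteq (\ker N)^\perp$: the image of $N$ and the fixed space of $N$ are $b$-orthogonal.

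The second ingredient is a purely linear-algebraic fact: a nonzero nilpotent operator $N$ on a finite-dimensional space always satisfies $\ker N \cap \operatorname{Im} N \neq \{0\}$. Indeed, $\operatorname{Im} N$ is $N$-invariant, and if the intersection were trivial then $N$ would restrict to an injective, hence bijective, nilpotent operator on $\operatorname{Im} N$, forcing $\operatorname{Im} N = 0$. Combining the two ingredients, choose any $0 \neq v \in \ker N \cap \operatorname{Im} N$. Since $v \in \operatorname{Im} N \subseteq (\ker N)^\perp$ while simultaneously $v \in \ker N$, we obtain $b(v,v) = 0$, so $v$ is an isotropic vector with $Pv = v$. (If $N = 0$, i.e.\ $P = \Id$, then any isotropic vector works; such vectors exist because $b$ is indefinite.)

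Finally, the ``in particular'' clause is immediate: the fixed isotropic ray through $v$ lies in $\Lambda \setminus \{0\}$ and descends, under the quotient defining $\Eins^{1,n}$, to a point fixed by the induced conformal transformation. I expect the only genuine subtlety to be the orthogonality step --- recognizing that invariance of $b$ forces $\operatorname{Im} N \perp \ker N$ is the crux, after which everything is bookkeeping. A Jordan-block analysis of $N$ (in the spirit of Lemma~\ref{lem:hyper}) would give an alternative route, but the orthogonality argument has the advantage of requiring no choice of basis and keeping the signature count implicit.
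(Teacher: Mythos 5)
Your proof is correct, and it takes a genuinely different route from the paper's. The paper argues recursively: it picks a fixed vector $v$ (nonempty kernel of $P-\Id$), is done if $v$ is isotropic, and otherwise splits $\R v \oplus v^\perp$, restricts $P$ (still parabolic) to $v^\perp$, and iterates until it either finds an isotropic fixed vector or obtains two orthogonal fixed vectors whose squared pseudo-norms have opposite signs, in which case a suitable linear combination is isotropic and fixed. Your argument instead isolates the structural identity $\operatorname{Im} N \subseteq (\ker N)^\perp$ coming from invariance of $b$, combines it with the purely linear-algebraic fact $\ker N \cap \operatorname{Im} N \neq \{0\}$ for nilpotent $N \neq 0$, and reads off an isotropic fixed vector directly; both steps check out (the expansion of $b(Pu,Pv)=b(u,v)$ and the injective-nilpotent contradiction are sound, and the $P=\Id$ case is handled correctly since the form is indefinite). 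What your approach buys: it is basis-free, involves no induction on signature, works verbatim for unipotent isometries of any nondegenerate symmetric bilinear form of any signature, and gives the finer information that for $P \neq \Id$ the isotropic fixed vector can be taken inside $\operatorname{Im}(P - \Id)$. What the paper's approach buys: its splitting-and-restricting template mirrors the proof of Lemma~\ref{lem:hyper} and is reused in the proof of Lemma~\ref{lem:plane} (``one can keep constructing invariant vectors as in the proof of Lemma~\ref{lem:para}''), so it integrates more directly with the surrounding arguments; if one adopted your proof, that later step would need to be rephrased (e.g.\ by applying your orthogonality argument to the restriction of $P$ to $I^\perp$).
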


   \begin{proof}
    Because $P$ is parabolic, there is some non-zero vector $v \in \ker(P - {1})$. If $v$ is isotropic, we are done. If not, then $\R^{2,n} = \R v \oplus v^\perp$, and each factor is preserved by $P$. Moreover, the restriction of $P$ to each factor is again parabolic. Hence, we can repeat the same argument on $v^\perp$. After finitely many steps, we obtain an invariant vector $w \in v^\perp$ that is either isotropic, or such that the squared pseudo-norms $\Vert v \Vert^2$ and $\Vert w \Vert^2$ have opposite signs. In the latter case, there is some linear combination of $v$ and $w$ that is isotropic and invariant.
   \end{proof}

   \subsection{Mixed elements}

   \begin{lemma}
    If $A \in \OG(2,n+1)$ has a Jordan decomposition $A = EPH$ with $E = 1$, then $A$ has a fixed point in $\Eins^{1,n}$.
   \end{lemma}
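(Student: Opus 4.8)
The plan is to translate the statement into linear algebra and then exploit the commutativity of the Jordan factors. Recall that a fixed point of $A$ in $\Eins^{1,n}$ is exactly an isotropic vector $v \in \R^{2,n+1}$ with $Av = \lambda v$ for some $\lambda > 0$: the positivity is essential, because the quotient defining $\Eins^{1,n}$ identifies $v$ with $cv$ only for $c \in (0,\infty)$, so a negative eigenvalue would preserve the isotropic line but not the oriented ray. Since $A = EPH$ with $E = 1$, we have $A = PH = HP$, a commuting product of a unipotent $P$ and a hyperbolic $H$. Because the eigenvalues of $A$ coincide with those of $H$ (as $P$ is unipotent and commutes with $H$), and $H$ is hyperbolic, all eigenvalues of $A$ are positive. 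Thus positivity of the eigenvalue comes for free, and the only real task is to exhibit an isotropic eigenvector.

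I would split into two cases. If $H = 1$, then $A = P$ is parabolic, and Lemma~\ref{lem:para} directly provides an isotropic eigenvector with eigenvalue $1$, hence a fixed point in $\Eins^{1,n}$. If $H \neq 1$, then by Lemma~\ref{lem:hyper} its nontrivial eigenvalues occur in reciprocal pairs $\lambda_i, \lambda_i^{-1}$, so $H$ has some eigenvalue $\lambda > 1$. Let $W := \ker(H - \lambda)$ and let $B$ denote the pseudo-scalar product. I would first check that $W$ is totally isotropic: for $u, u' \in W$, invariance of $B$ under $H$ gives $B(u,u') = B(Hu, Hu') = \lambda^2 B(u,u')$, and since $\lambda^2 \neq 1$ this forces $B(u,u') = 0$. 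Next, since $P$ commutes with $H$ it preserves $W$, and the restriction $P|_W$ is again unipotent, hence admits a genuine eigenvector $v \in W$ with $Pv = v$. Then $Av = PHv = \lambda Pv = \lambda v$ with $\lambda > 0$, and $v$ is isotropic because it lies in the totally isotropic space $W$; the ray $[v]$ is the desired fixed point.

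The point that requires the most care—and the closest thing to an obstacle—is precisely keeping track of the sign of the eigenvalue, so that the invariant isotropic line descends to an honest fixed point of the quotient by positive scalings rather than merely a fixed point of the projectivization. This is secured by the structure of the decomposition: the hyperbolic part supplies the positive eigenvalue $\lambda$, while the parabolic part fixes the relevant eigenvector. Everything else reduces to the standard facts that commuting operators preserve one another's eigenspaces and that a real unipotent operator always has a genuine eigenvector with eigenvalue $1$.
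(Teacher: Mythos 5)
Your proof is correct, and it takes a route that is genuinely different from the paper's, even though both arguments live in the same linear-algebraic setting of the commuting product $A = PH$. The paper works inside the distinguished $4$-plane of Lemma~\ref{lem:hyper}: it first takes the non-trivial subspace $L$ of that $4$-plane on which the parabolic $P$ acts trivially, uses commutation to see that $L$ is $H$-invariant, extracts an $H$-eigendirection inside $L$, and asserts that this eigendirection is isotropic. You reverse the roles of the two factors: you first fix a single eigenspace $W = \ker(H - \lambda)$ with $\lambda > 1$ (when $H \neq 1$), observe that $W$ is totally isotropic via the pairing identity $B(u,u') = B(Hu,Hu') = \lambda^2 B(u,u')$, and only then find a $P$-fixed vector inside $W$. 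This order of operations buys you three things the paper leaves implicit: isotropy comes for free, whereas the paper's ``thus isotropic'' is not immediate when $H$ has eigenvalue $1$ inside the $4$-plane (e.g.\ $\lambda_2 = 1$, where the corresponding eigenspace carries the form $2zt$ of signature $(1,1)$ and contains non-isotropic eigendirections); the positivity of the eigenvalue, which is genuinely needed because $\Eins^{1,n}$ is the quotient by \emph{positive} scalings only, is tracked explicitly rather than silently; and the degenerate case $H = 1$ is dispatched cleanly by Lemma~\ref{lem:para} instead of being absorbed into an argument that tacitly assumes $H$ has a non-trivial eigenvalue. What the paper's version buys in exchange is brevity and consistency of notation, since the same eigenbasis and $4$-plane are reused immediately afterwards in Lemmas~\ref{lem:PHnot1} and~\ref{lem:plane}; but as a self-contained proof of this lemma, yours is the more robust of the two.
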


   \begin{proof}
   This means $A = PH$, with $P$ and $H$ commuting. Then $P$ must preserve the 4-plane $\R^4$ generated by the first coordinates $x, y, z, t$ as in Lemma~\ref{lem:hyper}. Since $P$ is parabolic, it has a non-trivial subspace $L \subset \R^4$ on which it acts trivially.  Since $P$ and $H$ commute, then $L$ is $H$-invariant, and hence contains an eigendirection of $H$, which is thus an isotropic eigendirection of $PH$, and a fixed point in $\Eins^{1, n}$.
   \end{proof}

   \begin{lemma} \label{lem:PHnot1}
    If $A \in \OG(2,n+1)$ has a Jordan decomposition $A =EPH$ with $H \neq 1$ and $P \neq 1$, then $A$ has a fixed point in $\Eins^{1,n}$.
   \end{lemma}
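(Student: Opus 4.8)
The plan is to reduce to the preceding lemma (the case $E=1$, i.e.\ $A = PH$) by exploiting that the elliptic part $E$ commutes with both $P$ and $H$. That lemma provides an isotropic eigenvector of $PH$: a null $w \in \R^{2,n+1}$ with $PH\,w = c\,w$, $c>0$. Since $E$ commutes with $PH$, it permutes the eigenspaces of $PH$, and the first thing I would establish is that $E$ in fact preserves the line $\R w$. Granting this, $E w = \mu w$, and because $w$ is a real eigenline of the elliptic $E$ we have $\mu = \pm 1$, whence $A w = EPH\,w = \mu c\, w$. Thus $w$ is an isotropic eigenvector of $A$, giving a fixed point $[w] \in \Eins^{1,n}$.

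To locate $w$ and verify that $E$ preserves $\R w$, I would normalise $H$ via Lemma~\ref{lem:hyper} and use $P \neq 1$ to pin down where $P$ acts nontrivially. As $P$ commutes with $H$ it preserves every $H$-eigenspace, and a unipotent map is trivial on any one-dimensional eigenspace and on the positive-definite block $W^\perp \subseteq \ker(H-1)$. Hence $P \neq 1$ forces one of two situations: either $H$ has a repeated eigenvalue $\lambda \neq 1$, so that its $\lambda$-eigenspace is a totally isotropic plane carrying a nontrivial unipotent $P$; or $P$ acts nontrivially on the indefinite block $E_1 := \ker(H-1)$, which then has signature $(1,n)$ and on which $P$ is a null rotation of $\R^{1,n}$. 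In the first situation the isotropic eigenvectors of $PH$ sit in distinct $H$-eigenspaces with distinct positive eigenvalues, so the commuting $E$ preserves each such line. In the second, the null rotation $P|_{E_1}$ has a \emph{unique} fixed isotropic direction (the unique fixed point at infinity of the associated parabolic isometry of $\mathbb{H}^n$), which $E$ must therefore preserve. Either way $E$ stabilises an isotropic eigenline of $PH$, completing the reduction.

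The step I expect to require the most care is controlling the sign $\mu$. The construction yields a null eigendirection of $A$ with eigenvalue $\mu c$, and since it is only such a preserved null line that we need, the reduction already delivers the statement; the delicate point is that, under the convention $v \sim cv$ with $c>0$, one must check the eigenvalue is positive, as otherwise $A$ merely produces a period-two orbit $\{[w],[-w]\}$. This becomes a matter of verifying that $E$ acts by $+1$ rather than $-1$ on the chosen isotropic line, which I would confirm through the finer analysis of $E$ on the block where $P$ is nontrivial: there $E$ commutes with a nontrivial unipotent, so it preserves the latter's fixed isotropic line and acts on it by $\pm 1$. The transformations of type $A = -PH$ (with $E = -\Id$ on the relevant block) are exactly the borderline case to watch, and settling their sign explicitly — equivalently, tracking which component of $\OG(2,n+1)$ the product $EPH$ occupies — is where the real work of the proof lies.
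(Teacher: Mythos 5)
Your construction of an $A$-invariant isotropic line is correct, and it is in substance the paper's own argument. The paper splits on the spectrum of $H$ exactly along your two cases: when the nontrivial eigenvalues of $H$ are pairwise distinct (your case (b), where necessarily $\lambda_2=1$), it simply observes that the one-dimensional eigenspace $\R e_1$ of $H$ is preserved by everything commuting with $H$, in particular by $A$ itself --- so your detour through the parabolic's unique fixed point at infinity in $\ker(H-1)\cong\R^{1,n}$ is correct but heavier than necessary; when $H$ has a repeated eigenvalue $\lambda\neq 1$ (your case (a)), the paper, like you, uses that $E$ must preserve the one-dimensional fixed space of $P$ inside the invariant totally isotropic eigenplane. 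Up to this point the two arguments agree.

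The sign question you postpone is where your proposal stops being a proof --- but you have put your finger on a real defect, and the step you call ``the real work'' cannot be carried out. The statement, read literally in the paper's convention (where $\Eins^{1,n}$ is the quotient by \emph{positive} rescalings, a double cover of the projective quadric), is false. Take $E=-\Id$, which is elliptic and central in $\OG(2,n+1)$; take $H$ with eigenvalues $\lambda^{\pm1},1,\dots,1$ ($\lambda\neq1$) and $P$ a nontrivial unipotent supported on $\ker(H-1)\cong\R^{1,n}$, with fixed null direction $\R w$. Then $A=-PH$ has Jordan decomposition $(E,H,P)=(-\Id,H,P)$ with $H\neq1$, $P\neq1$, and its only isotropic eigendirections are $\R e_1$, $\R e_2$, $\R w$, with eigenvalues $-\lambda$, $-\lambda^{-1}$, $-1$: every invariant null line is reversed, so $A$ has no fixed ray, only period-two orbits. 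So do not sink time into settling the sign (nor into component-tracking: for suitable $n$, $-\Id$ lies in the identity component). What your first two paragraphs prove --- and what the paper's own proof proves, since it has the same gap: its first case stops at ``$A$ preserves $\R e_1$,'' and in its second case the assertion that ``$E$ is trivial'' overlooks $E=-\Id$ on the plane --- is that $A$ preserves an isotropic direction, i.e.\ has a fixed point in the projectivized Einstein universe, equivalently a fixed point or a $2$-periodic orbit in the paper's $\Eins^{1,n}$. That is the form in which the lemma is actually true (and it can then be applied through $A^2$, whose eigenvalues on these lines are positive).
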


   \begin{proof}  Consider the splitting $\R^{2,2} \oplus \R^{n-1}$ obtained in Lemma~\ref{lem:hyper}, where $\R^{2,2}$ is spanned by four isotropic eigenvectors of $H$, denoted by $e_1,...e_4$, with eigenvalues $\lambda_1,\lambda_1^{-1},\lambda_2,\lambda_2^{-1}$. Since $H \neq 1$, we have that $\lambda_1 \neq 0$.
   \begin{itemize}
    \item If $H$ has different eigenvalues, that is $\lambda_1 \neq \lambda_2$, then each matrix commuting with $H$ preserves the isotropic eigendirection $\R e_1$, and we are done.

    \item If $\lambda_1 = \lambda_2$, then $P$ and $E$ preserve the two isotropic 2-planes $\R e_1 \oplus \R e_3$ and $\R e_2 \oplus \R e_4$. On each, $P$ is parabolic, and since $P$ is non-trivial, it acts non-trivially on one of them, say $\R e_1 \oplus \R e_3$. (Note that if $P$ was trivial on $\R^{2,2}$, then it would be an element of the compact subgroup $\OG(n-1)$, hence elliptic.) Now, $E$ acts as a rotation on $\R e_1 \oplus \R e_3$, and commutes with $P$. Hence $E$ is trivial, because it must preserve the one-dimensional subspace $\ker(P - 1)$. By Lemma~\ref{lem:para}, $P$ preserves some isotropic vector in $\R e_1 \oplus \R e_3$, giving us the desired isotropic eigendirection for $A$.
    \end{itemize}
    \end{proof}

    \begin{lemma} \label{lem:plane}
     If $A \in \OG(2,n+1)$ has a Jordan decomposition $A =EPH$ with $H \neq 1$, $P = 1$ or with $H = 1$, $P \neq 1$, then $A$ preserves an isotropic 2-plane, or an isotropic direction, in $\R^{2,n+1}$.
    \end{lemma}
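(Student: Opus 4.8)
The plan is to treat the two cases of the hypothesis separately, but in both to produce a nonzero $A$-invariant totally isotropic subspace. Recall that in signature $(2,n+1)$ the Witt index is $2$, so every totally isotropic subspace has dimension at most $2$; thus exhibiting any nonzero $A$-invariant totally isotropic subspace automatically yields an isotropic direction or an isotropic $2$-plane, which is exactly the dichotomy in the statement. The mechanism in both cases is the same: the Jordan factors commute, and this is what lets an invariant subspace of the non-elliptic factor survive under the elliptic factor $E$.

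First I would treat the case $P = 1$, $H \neq 1$, so that $A = EH$. Since $H$ is hyperbolic and nontrivial, it has a real eigenvalue $\lambda_1 > 0$ with $\lambda_1 \neq 1$; let $V_{\lambda_1} = \ker(H - \lambda_1)$ be the corresponding eigenspace. For $v,w \in V_{\lambda_1}$ one has $\langle v,w\rangle = \langle Hv, Hw\rangle = \lambda_1^2\langle v,w\rangle$, and since $\lambda_1^2 \neq 1$ this forces $\langle v,w\rangle = 0$; hence $V_{\lambda_1}$ is totally isotropic, and therefore of dimension $1$ or $2$. Because $E$ commutes with $H$ it preserves each eigenspace of $H$, while $H$ acts by the scalar $\lambda_1$ on $V_{\lambda_1}$; consequently $A(V_{\lambda_1}) = E H(V_{\lambda_1}) = \lambda_1 E(V_{\lambda_1}) = V_{\lambda_1}$, so $A$ preserves the isotropic subspace $V_{\lambda_1}$.

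Next I would treat the case $H = 1$, $P \neq 1$, so that $A = EP$. I would write $N = \log P$, the (finite) logarithm of the unipotent $P$; then $N$ is a nonzero nilpotent element of $\mathfrak{o}(2,n+1)$, hence skew-adjoint for the form, $\langle Nu, v\rangle = -\langle u, Nv\rangle$, and $\ker(P-1) = \ker N$. Skew-adjointness gives $\mathrm{im}(N) = (\ker N)^\perp$, so the radical of the form restricted to $\ker N$ is $R := \ker N \cap (\ker N)^\perp = \ker N \cap \mathrm{im}(N)$. This $R$ is totally isotropic, and it is nonzero since $N \neq 0$ forces $\ker N \cap \mathrm{im}(N) \neq 0$; hence $\dim R \in \{1,2\}$. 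As $E \in \OG(2,n+1)$ commutes with $P$, it preserves both $\ker N$ and the form, and therefore preserves the canonically defined radical $R$; since $P$ fixes $\ker N$ pointwise, $A = EP$ preserves $R$.

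The routine points are the skew-adjointness of $N$ and the identities $\ker(P-1) = \ker N$ and $\mathrm{im}(N) = (\ker N)^\perp$. The one place demanding care is the verification that the radical $R = \ker N \cap \mathrm{im}(N)$ is simultaneously nonzero and totally isotropic, which is precisely where the nilpotency of $N$ and the indefiniteness of the form interact; everything else then follows from the elementary remark that totally isotropic subspaces in signature $(2,n+1)$ are at most $2$-dimensional, converting ``$A$-invariant totally isotropic subspace'' directly into the desired conclusion.
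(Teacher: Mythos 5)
Your proof is correct, and it takes a genuinely different route from the paper's, most notably in the parabolic case. For $H \neq 1$, $P = 1$, both you and the paper exploit commutation with $H$, but the paper works inside the explicit normal form of Lemma~\ref{lem:hyper} and splits into sub-cases: $\lambda_1 \neq \lambda_2$ (invariant isotropic direction $\R e_1$) versus $\lambda_1 = \lambda_2$ (invariant isotropic $2$-planes $\R e_1 \oplus \R e_3$ and $\R e_2 \oplus \R e_4$); your observation that the eigenspace $\ker(H-\lambda_1)$ is automatically totally isotropic, via $\langle v,w\rangle = \langle Hv,Hw\rangle = \lambda_1^2\langle v,w\rangle$, handles both sub-cases at once and needs no normal form at all. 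For $H = 1$, $P \neq 1$, the paper stays at the group level: it takes $I$ to be the span of the isotropic fixed vectors of $P$ (non-trivial by Lemma~\ref{lem:para}), rules out non-degeneracy of $I$ by a contradiction argument that re-runs Lemma~\ref{lem:para} on $I^\perp$, and then takes $I \cap I^\perp$. You instead pass to the nilpotent logarithm $N = \log P$, which lies in the Lie algebra and is hence skew-adjoint, and take the radical $R = \ker N \cap (\ker N)^\perp = \ker N \cap \mathrm{im}\,N$ of the fixed space; non-vanishing of $R$ is the standard fact that $\ker N \cap \mathrm{im}\,N \neq 0$ for a nonzero nilpotent $N$, and invariance under $E$ is immediate because $R$ is canonically attached to $P$ and the form. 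The routine facts you defer (skew-adjointness of $N$, $\ker(P-1) = \ker N$, $\mathrm{im}\,N = (\ker N)^\perp$) are indeed standard and hold as stated. What each approach buys: yours is more self-contained and uniform (no reliance on Lemmas~\ref{lem:hyper} and~\ref{lem:para}, no case splits, and the canonicity of $R$ makes every invariance claim frictionless), while the paper's avoids the exponential/logarithm correspondence entirely and keeps explicit track of the preserved eigendirections in the normal form, recycling structural lemmas it has already proved and will use again in the classification theorem.
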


    In the case that an isotropic direction is preserved, we again get a fixed point in $\Eins^{1,n}$.

     \begin{proof}
      If $H \neq 1$ and $\lambda_1 \neq \lambda_2$ then, because $E$ and $H$ commute, $E$ preserves the isotropic direction $\R e_1$. Similarly, if $H \neq 1$ and $\lambda_1 = \lambda_2$, then $E$ preserves the isotropic 2-planes $\R e_1 \oplus \R e_3$ and $\R e_2 \oplus \R e_4$.

      If $H = 1$ but $P \neq 1$, consider the space $I$ generated by isotropic fixed vectors of $P$. By Lemma \ref{lem:para}, $I$ is non-trivial, and $P$ acts trivially on it. Because $P$ and $E$ commute, $E$ preserves $I$ as well. Moreover, $P$ and $E$ also preserve $I^\perp$, and the restriction of $P$ to $I^\perp$ is again parabolic. If $I$ is non-degenerate, then $I \cap I^\perp = \{ 0\}$ and $I^\perp$ is also non-degenerate. If the signature on $I^\perp$ is indefinite, then by Lemma~\ref{lem:para}, there is an isotropic fixed vector in $I^\perp$, a contradiction. If the signature on $I^\perp$ is definite, one can keep constructing invariant vectors as in the proof of Lemma~\ref{lem:para}, concluding that $P$ acts trivially on $I^\perp$. But then $P = 1$, a contradiction. We conclude that $I$ is degenerate, and hence $I^\prime = I \cap I^\perp$ is a non-trivial, isotropic subspace invariant under $P$, $E$ and $A$. Note that isotropic subspaces of $\R^{2,n+1}$ can only be of dimension $1$ or $2$, so we are done.
     \end{proof}

  \subsection{Conclusion}

  The preceding discussion allows us to classify conformal transformations of the Einstein static universe. The following two Theorems, together with Lemma~\ref{lem:ellincover}, imply Theorem~\ref{thm:intro2} from the introduction.

  \begin{theorem} \label{thm:classification}
  Let $\phi \in \hOG^\uparrow(2,n+1)$ act on $\hEins^{1,n}$ by a conformal transformation. Then, there are three possibilities:
  \begin{enumerate}
   \item $\phi$ is non-escaping elliptic and preserves a time function.
   \item $\phi$ is non-escaping and has a fixed point.
   \item $\phi$ is escaping and is a commuting product of an elliptic escaping element and a (possibly trivial) transformation with a fixed point.
  \end{enumerate}
  \end{theorem}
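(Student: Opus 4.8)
The plan is to run the escaping/non-escaping dichotomy of Theorem~\ref{thm:escap} in parallel with the Jordan decomposition of Proposition~\ref{prop:JordanO}, reducing everything to the behaviour of a single well-chosen elliptic factor. Write $A := \pr(\phi) \in \OG(2,n+1)$ and take its Jordan decomposition $A = EHP$ with $E,H,P$ commuting, $E$ elliptic, $H$ hyperbolic, $P$ parabolic. Set $F := HP$, so that $A = EF$ with $E,F$ commuting and, by the lemma handling the case $E=1$, $F$ has a fixed point in $\Eins^{1,n}$. I would first lift this to a commuting factorization $\phi = \tilde E \, \tilde F$ in $\hOG^\uparrow(2,n+1)$, where $\tilde E$ is elliptic and $\tilde F$ fixes a point $q \in \hEins^{1,n}$. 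Concretely, choose the lift $\tilde F$ of $F$ fixing a point in the fiber over a fixed point of $F$ (any lift shifts that fiber by a constant integer amount, cancelled by composing with a central time-translation), and set $\tilde E := \phi \, \tilde F^{-1}$, which projects to $E$ and is therefore elliptic. Commutativity of $\tilde E$ and $\tilde F$ is equivalent to $\tilde F$ commuting with $\phi$; since $F$ and $A$ commute, the commutator $[\tilde F,\phi]$ lies in the central kernel $2\pi\Z$, and I would show it is trivial by lifting a path $F_t = \exp(t\log H)\exp(t\log P)$ from $1$ to $F$ inside $\OG^0(2,n+1)$ and noting that $[\tilde F_t,\phi]$ is a continuous $2\pi\Z$-valued, hence constant, family equal to its value $1$ at $t=0$.

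Since $\tilde F$ has a fixed point, its orbits are bounded, so by the dichotomy it is non-escaping. The key observation is that commutativity forces $\phi^k(q) = \tilde E^{k}\tilde F^{k}(q) = \tilde E^{k}(q)$, so the $\phi$-orbit of $q$ coincides with the $\tilde E$-orbit of $q$. By Theorem~\ref{thm:intro1} an escaping transformation moves every point to infinity while a non-escaping one keeps every orbit bounded; comparing the two descriptions of the orbit of $q$ therefore yields that $\phi$ is escaping if and only if $\tilde E$ is escaping. If $\phi$ is escaping, then $\tilde E$ is an escaping elliptic element and $\tilde F$ has a fixed point, which is exactly case (iii) (with $\tilde F$ possibly trivial when $F=1$). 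If $\phi$ is non-escaping, then $\tilde E$ is non-escaping; when $F=1$ this gives $\phi = \tilde E$ non-escaping elliptic, and Lemma~\ref{lem:ellincover} places it in case (i). It remains to treat the non-escaping, non-elliptic case and show it falls under (ii).

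For that last case I would produce a fixed point of $\phi$ via the following lifting principle: if $A$ fixes a point $\bar p \in \Eins^{1,n}$ and $\phi$ is non-escaping, then $\phi$ fixes a point of $\hEins^{1,n}$, because $\phi$ permutes the fiber over $\bar p$ by a constant integer shift, and a nonzero shift would produce an orbit with time coordinate tending to $\pm\infty$, contradicting boundedness. So it suffices to exhibit a positive-eigenvalue isotropic eigenline of $A$. Here I use that $\tilde E$ being non-escaping elliptic forces, by Lemma~\ref{lem:ellincover}, that $E$ is conjugate into $\OG(n+1)$; after conjugation $E$ fixes the negative-definite time-plane $T$ pointwise. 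The mixed-element lemmas (Lemma~\ref{lem:PHnot1} and Lemma~\ref{lem:plane}, together with the $E=1$ lemma) guarantee that $A$ preserves an isotropic line or a totally isotropic $2$-plane $\Pi$. Every nonzero isotropic vector has nonzero $T$-component, and since $E$ fixes $T$ and is elliptic, a short computation shows that $E$ cannot scale such a line, nor rotate or reflect $\Pi$, without forcing the $T$-component to vanish; hence $E$ restricts to the identity on $\Pi$ and on any isotropic eigenline. Because $E,H,P$ are polynomials in $A$, they share $A$-eigenlines, so on a real isotropic eigenline $A$ acts by $h\cdot p$ with $h>0$ (hyperbolic) and $p=1$ (parabolic); with $E$ trivial the eigenvalue is positive. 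In the $2$-plane case $A|_\Pi = (HP)|_\Pi$ is $\lambda$ times a unipotent map ($\lambda>0$), which again has a positive-eigenvalue eigenvector. Either way $A$ fixes a point of $\Eins^{1,n}$, and the lifting principle puts $\phi$ in case (ii).

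The main obstacle is this last paragraph: showing that non-escaping genuinely excludes the rotated isotropic $2$-plane and the negative-eigenvalue behaviour that would otherwise leave $\phi$ with only a periodic orbit rather than a fixed point. The decisive input is that non-escaping propagates to the elliptic factor and makes it spacelike, which is what rigidifies $E$ on the isotropic directions; verifying that $E$ must restrict to the identity on a totally isotropic $2$-plane (rather than act as a genuine rotation) is the computation I expect to require the most care. A secondary technical point, flagged above, is the construction of the commuting lift $\phi = \tilde E\,\tilde F$ in the central extension $\hOG(2,n+1)$, which the path-lifting argument handles using that $F=HP$ lies in the identity component.
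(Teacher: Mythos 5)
Your proposal is correct, and it reaches the classification by a genuinely different route in the decisive case, even though it draws on the same lemma infrastructure (Jordan decomposition, the mixed-element Lemmas~\ref{lem:PHnot1} and \ref{lem:plane}, Lemma~\ref{lem:ellincover}, the fiber-shift argument, and the point-independence of the dichotomy from Theorem~\ref{thm:escap}). The paper organizes the proof downstairs by cases ($A$ elliptic / $A$ has a fixed point in $\Eins^{1,n}$ / neither) and, in the hard case, argues in the direction ``not elliptic and no fixed point $\Rightarrow$ escaping'': by Lemma~\ref{lem:plane} the elliptic part acts as a nontrivial rotation on an invariant isotropic $2$-plane, whose projection is a null geodesic of $\Eins^{1,n}$; since a nontrivial element of $\OG(n+1)$ cannot preserve such a photon, $E$ has a nontrivial $\SO(2)$-component and is therefore escaping. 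You instead first build the commuting lift $\phi = \tilde E\,\tilde F$ in the central extension and argue the contrapositive direction ``non-escaping and not elliptic $\Rightarrow$ $A$ has a fixed point'': non-escaping transfers to $\tilde E$ via the shared orbit of the fixed point $q$ of $\tilde F$, Lemma~\ref{lem:ellincover} then puts $E$ into $\OG(n+1)$ up to conjugacy, and your projection-to-$T$ argument (every nonzero isotropic vector has nonzero $T$-component, so $\pi_T$ is injective on any totally isotropic invariant subspace, forcing $E$ to restrict to the identity there) yields a positive eigenvalue for $A$ on an isotropic line. Your route is purely linear-algebraic where the paper's is geometric, and it buys two things the paper glosses over: an explicit construction of the commuting factorization upstairs (your path-lifting argument with $F_t = \exp(t\log H)\exp(t\log P)$, using that $\log H,\log P$ are polynomials in $H,P$, so $[\tilde F_t,\phi]$ is a continuous $2\pi\Z$-valued family), and explicit control of eigenvalue signs --- the paper's remark after Lemma~\ref{lem:plane} that an invariant isotropic direction gives a fixed point tacitly assumes the eigenvalue is positive, whereas a negative eigenvalue would only give an antipodal involution; your $\OG(n+1)$-triviality argument is precisely what excludes this in the non-escaping case.

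Two small imprecisions, neither a gap. First, on an invariant totally isotropic $2$-plane $\Pi$, the restriction $H|_\Pi$ need not a priori be scalar, so ``$\lambda$ times a unipotent map'' should be ``a commuting product of a positive-diagonalizable and a unipotent map''; its eigenvalues are still positive reals, so a real eigenvector with positive eigenvalue exists and your conclusion stands (and for the planes actually produced by Lemma~\ref{lem:plane}, $H|_\Pi$ is scalar, since either $H = 1$ or $\lambda_1 = \lambda_2$ there). Second, you should record that $F = HP$ lies in the identity component of $\OG(2,n+1)$ (it is a product of exponentials), so that $\tilde F$, and hence $\tilde E = \phi\,\tilde F^{-1}$, are time-orientation preserving; this is needed to legitimately apply Lemma~\ref{lem:ellincover} and the dichotomy to $\tilde E$, and your own path $F_t$ already furnishes it.
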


  Note that isometries of $\bS^n$ can have fixed points, in which case the first- and second conditions are satisfied simultaneoulsy.

 \begin{proof}
  Let $A := \pr \phi = EPH \in \OG(2,n+1)$. If $\phi$ is non-escaping and $P=H=1$, then by Lemma~\ref{lem:ell}, $\phi$ preserves a time function. Next, suppose that $A$ has a fixed point $p \in \Eins^{1,n}$. Let $(t,x) \in \pi^{-1}(p)$, where $\pi \colon \hEins^{1,n} \to \Eins^{1,n}$ is the quotient map. Then $\phi(t,x) = (t+2\pi k,x)$ for some $k \in \Z$. If $k=0$, then $\phi$ has a fixed point, and if $k \neq 0$, then $\phi$ is escaping.

  It remains to deal with the case that $A$ is not elliptic and does not have a fixed point. Then, by Lemma~\ref{lem:PHnot1}, $P$ or $H$ must be trivial, and by Lemma~\ref{lem:plane}, $E$ preserves an isotropic 2-plane $\mathcal{E}$ on which it acts as a non-trivial rotation. The projection of $\mathcal{E}$ is a null geodesic in $\Eins^{1, n}$, which lifts to a null geodesic $\gamma$ in $\hEins^{1,n}$, necessarily of the form $\gamma(s) = (t(s),x(s))$ with $\dot{t}(s) = \Vert \dot{x}(s)\Vert_{\bS^n} > 0$. Now, up to conjugacy, we can assume $E \in \OG(2) \times \OG(n+1)$. A non-trivial element $E \in \OG(1+n)$ cannot preserve $\gamma$, since necessarily $(t(s),x(s)) \neq (t(s),Ex(s))$ for some $s$. This implies that $E$ has a non-trivial projection onto $\OG(2)$, which must lie in $\SO(2)$, because we assumed that $\phi$ is time-orientation preserving. Hence $E$ is escaping. Moreover, $P$ and $H$ each have a fixed point (see Lemmas \ref{lem:hyper} \& \ref{lem:para}), and one of them is trivial. Thus the third condition of the theorem is satisfied.
 \end{proof}

  \begin{theorem} \label{thm:conjugacy}
   Let $\phi, \psi \in \hOG^0(2,n+1)$ be future-escaping and contained in the connected component of the identity. Then there exists a diffeomorphism $\chi \colon \Eins^{1,n} \to \Eins^{1,n}$ (not necessarily conformal) such that $\psi = \chi \circ \phi \circ \chi^{-1}$.
  \end{theorem}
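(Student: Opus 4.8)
The plan is to produce the conjugating diffeomorphism by joining $\phi$ to $\psi$ through a smooth path of future-escaping elements of $\hOG^0(2,n+1)$ and then trivialising the resulting family of quotients. Write $M:=\hEins^{1,n}$. By Theorem~\ref{thm:escap}, every future-escaping element acts freely, properly and cocompactly on $M$, so each quotient $M/\langle\cdot\rangle$ is a closed manifold (a mapping torus, i.e.\ an $\bS^n$-bundle over $S^1$ with the Cauchy surface as fibre). Suppose we are given a smooth path $s\mapsto\phi_s$ in $\mathcal{E}^+\cap\hOG^0(2,n+1)$ with $\phi_0=\phi$ and $\phi_1=\psi$. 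Let $\Z$ act on $W:=[0,1]\times M$ by $k\cdot(s,p)=(s,\phi_s^{\,k}(p))$. This action is free because each $\phi_s$ is, and it is proper because each $\phi_s$ acts properly (Theorem~\ref{thm:escap}) and the parameter ranges over the compact interval; concretely one checks the sequential criterion of Proposition~\ref{prop:properactions}, using that a blow-up $|k_i|\to\infty$ would force $\phi_{s_i}^{\,k_i}(p_i)$ to leave every compact set. Hence $\mathcal{X}:=W/\Z$ is a smooth manifold with boundary and the projection $\mathcal{X}\to[0,1]$ is a proper submersion whose fibres are the compact quotients $M/\langle\phi_s\rangle$.

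Granting such a path, the conjugacy is obtained by Ehresmann's fibration theorem: the proper submersion $\mathcal{X}\to[0,1]$ is a trivial bundle, so there is a vector field $V$ on $\mathcal{X}$ projecting to $\partial_s$ whose flow trivialises it. Pulling $V$ back along the covering $W\to\mathcal{X}$ gives a $\Z$-invariant vector field $\widetilde{V}$ on $W$ that still projects to $\partial_s$. Its time-one flow carries the slice $\{0\}\times M$ to $\{1\}\times M$ and, being $\Z$-equivariant, descends to a diffeomorphism $\chi\colon M\to M$ with $\chi\circ\phi_0=\phi_1\circ\chi$, i.e.\ $\psi=\chi\circ\phi\circ\chi^{-1}$. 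Working directly on $W$ (rather than passing to universal covers) makes this uniform in $n$ and, as a byproduct, exhibits the diffeomorphism $M/\langle\phi\rangle\cong M/\langle\psi\rangle$ promised in the introduction. The orientation of the generator is automatically correct — we get $\psi$ and not $\psi^{-1}$ — since every $\phi_s$ is future-escaping, so the $S^1$-direction of the fibration is consistently future-oriented along the path.

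It remains to construct the path. It suffices to connect an arbitrary future-escaping $\theta\in\hOG^0(2,n+1)$ to a time translation $T_c\colon(t,x)\mapsto(t+c,x)$ with $c>0$, since the $T_c$ are mutually joined by $c\mapsto T_c$. Let $A:=\pr\theta=EPH$ be the Jordan decomposition (Proposition~\ref{prop:JordanO}). By Theorem~\ref{thm:classification}(iii) the elliptic factor $E$ is escaping while $P$ and $H$ have fixed points and one of them is trivial. Choosing one-parameter subgroups $s\mapsto P_s,\,H_s$ in $\hOG^0(2,n+1)$ running from the identity to $P,H$ and commuting with $E$ — these exist because the Jordan factors commute and lie in commuting subgroups — the path $\phi_s:=E\,P_s\,H_s$ deforms $\theta$ into the purely elliptic escaping element $E$. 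Finally, realising $E$ up to conjugation as a time translation composed with a rotation $E_2\in\SO(n+1)$ (Lemma~\ref{lem:ell}, using that $\theta$ lies in the identity component), one contracts $E_2$ to the identity through the connected group $\SO(n+1)$ while keeping the time-translation factor fixed, arriving at a pure $T_c$ with $c>0$.

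The delicate point, which I expect to be the main obstacle, is to verify that the interpolating paths remain future-escaping throughout, and not merely near the endpoints where openness of $\mathcal{E}^+$ (Proposition~\ref{prop:conj}) applies. The heuristic is that future-escaping elements form a cone stable under these operations: the elliptic factor $E$ contributes a definite positive time displacement which dominates the bounded effect of the fixed-point-possessing factors $P_s,H_s$, so that the defining inequality $q\ll\phi_s^{\,J}(q)$ of Theorem~\ref{thm:escap} is preserved for all $s$. Making this domination quantitative — equivalently, deducing connectedness of $\mathcal{E}^+\cap\hOG^0(2,n+1)$ directly from the cone structure on the Lie algebra described after Proposition~\ref{prop:conj} — is the crux of the argument.
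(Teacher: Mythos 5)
Your proposal has the same architecture as the paper's proof: join the two elements by a path $s\mapsto\phi_s$ of future-escaping elements, let $\Z$ act on $[0,1]\times\hEins^{1,n}$ by $(s,p)\mapsto(s,\phi_s^k(p))$, show this action is free and proper with compact quotient, and apply Ehresmann's theorem; even your path (deforming away the Jordan factors with fixed points, then contracting the rotation part inside $\SO(n+1)$) matches the paper's Claim~1 and its final step, and your equivariant-vector-field trivialization is a nice refinement, since it removes the $\psi$ versus $\psi^{-1}$ ambiguity present when one merely lifts a diffeomorphism of the quotients. The genuine gap is the one you flag yourself: you never prove that the interpolating path stays in $\mathcal{E}^+$, and you propose to obtain this from a ``quantitative domination'' estimate, equivalently from connectedness of the escaping cone in the Lie algebra. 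That is exactly the step carrying the content of the theorem, and the route you point to is one the paper explicitly describes (in the discussion after Proposition~\ref{prop:conj}) as nontrivial even for $\hEins^{1,n}$ and beyond its scope; as written, your proposal reduces the theorem to a harder-looking open question.

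The point you are missing is that no estimate is needed: a single unbounded orbit suffices, by the dichotomy of Theorem~\ref{thm:escap} (an element is either escaping or \emph{all} of its orbits lie in compact sets). Your own path already supplies such an orbit. Write $\phi$ as a commuting product of an elliptic escaping lift $\phi_E$ and a lift $f$, fixing some $p_0\in\hEins^{1,n}$, of $PH$, and deform $f$ through the lifts $f_s$ of $\exp(s\log P)\exp(s\log H)$, as you propose. These $f_s$ commute with $\phi_E$ (conjugation commutes with $\exp$ and $\log$, and the lifted commutator is a deck transformation depending continuously on $s$ and trivial at $s=0$, hence trivial for all $s$), and they all fix $p_0$ (their projections fix the projection of $p_0$, and the continuous family of lifts starts at the identity). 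Commutativity plus $f_s(p_0)=p_0$ gives $f_s(\phi_E^k(p_0))=\phi_E^k(f_s(p_0))=\phi_E^k(p_0)$, hence $\phi_s^k(p_0)=\phi_E^k(p_0)$ for all $k$. This orbit leaves every compact set, so $\phi_s$ is escaping, and it is future-escaping because the orbit runs to the future while a past-escaping element drives every orbit to the past (Theorem~\ref{thm:intro1}). This soft argument is precisely how the paper concludes its Claim~1; in fact the paper contracts $f$ through an arbitrary path in the stabilizer of $p_0$ via Proposition~\ref{prop:fixed}, and such a path only fixes the conjugate-point chain of $p_0$, which need not contain the points $\phi_E^k(p_0)$, so your commuting one-parameter subgroups are exactly what makes the identity $\phi_s^k(p_0)=\phi_E^k(p_0)$ literally valid. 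Two smaller repairs: your properness check for the action on $[0,1]\times M$ needs escape that is locally uniform in $s$ (the paper's Claim~2 gets this from a covering argument using continuity of $s\mapsto\phi_s$), not just properness of each $\phi_s$ separately; and the final contraction inside $\SO(n+1)$ stays future-escaping for the elementary reason that a time translation by $c>0$ composed with a sphere isometry satisfies $q\ll\phi^k(q)$ once $kc>\pi$.
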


  \begin{proof}
   The proof is divided in a series of steps.

   \textbf{Claim 1.} \textit{There is a smooth path $[0,1] \to \hOG^0(2,n+1),\ t \mapsto \phi_t$ such that $\phi_0 = \phi$, $\phi_1$ is elliptic, and $\phi_t$ is future-escaping for all $t \in [0,1]$.}

  By Theorem \ref{thm:classification}, $\phi = f \circ \phi_E$, where $f$ has a fixed point $p_0$, and $\phi_E$ is elliptic escaping. By Proposition~\ref{prop:fixed}, $f$ corresponds canonically to some element $\bar{f} \in \Conf(\R^{1,n})$. Moreover, because the connected components of the identity in $\Conf(\R^{1,n})$ and $\hOG(2,n+1)$ are characterized by preserving orientation and time-orientation, we have that $\bar{f} \in \Conf^0(\R^{1,n})$. Thus we can find a path $t \mapsto \bar{f}_t$ such that $\bar{f}_0 = \bar{f}$ and $\bar{f}_1 = \Id$ is the identity. But this then canonically defines a path $t \mapsto f_t$ in $\hOG^0(2,n+1)$ such that $f_0 = f$, $f_1 = \Id$, and $f_t$ fixes the same point $p_0$ for all $t \in [0,1]$. Then
  \begin{equation*}
   t \mapsto \phi_t := f_t \circ \phi_E
  \end{equation*}
  is the path we are looking for. In particular, $\phi_t$ is escaping for all $t$, because $\phi_t^k(p_0) = \phi_E^k(p_0)$ must leave every compact set as $k \to \infty$.

  \textbf{Claim 2.} \textit{The action of \[\Phi \colon [0,1] \times \hEins^{1,n} \longrightarrow [0,1] \times \Eins^{1,n},\quad (t,x) \longmapsto (t,\phi_t(x))\] is free and proper.}

  Freeness follows easily because for each $t \in [0,1]$, $\phi_t$ has no fixed points. To show properness, let $A \subset [0,1] \times \Eins^{1,n}$ be compact. Then $A \subset \tilde{K} := [0,1] \times K$ for some compact $K \subset \Eins^{1,n}$. We need to show that
  \begin{equation*}
   G_{\tilde{K}} = \{j \in \Z \mid \Phi^j(\tilde{K}) \cap \tilde{K} \neq \emptyset \}
  \end{equation*}
  is finite. Note that
  \begin{equation*}
   \Phi^j(\tilde{K}) \cap \tilde{K} = \bigcup_{t \in [0,1]} \{t\} \times \left( \phi_t^j(K) \cap K \right).
  \end{equation*}
  For each $t \in [0,1]$, $\phi_t$ is escaping and hence acts properly. Thus, there is $J_t \in \Z$ such that
  \begin{equation*}
   \phi_t^j(K) \cap K = \emptyset \quad \text{for all } j \geq J_t.
  \end{equation*}
  Because $t \mapsto \phi_t$ is continuous, there is a $\delta > 0$ small enough such that
  \begin{equation*}
   \phi_s^j(K) \cap K = \emptyset \quad \text{for all } j \geq J_t \text{ and all } s \in (t-\delta, t+\delta) \cap [0,1].
  \end{equation*}
  We can thus cover $[0,1]$ by finitely many such intervals, and the biggest $J_t$ gives us an upper bound for the set $G_{\tilde{K}}$. Similarly, there is a lower bound for $G_{\tilde{K}}$, so it is finite, and $\Phi$ acts properly.

  \textbf{Claim 3.} \textit{The maps $\phi_0$ and $\phi_1$ are conjugate as elements of $\Diff(\hEins^{1,n})$.}

  By Claim 2, we can construct the quotient manifold $M := [0,1] \times \hEins^{1,n} / \Phi$, and there is a projection $\pi \colon M \to [0,1]$ which is submersive. We show that $M$ is compact. For each $t$, $\pi^{-1}(t) = \hEins^{1,n} / \phi_t$ is compact. Therefore, every open cover $U_\alpha$ of $M$ contains a finite cover $U_{\alpha_i}$ of $\pi^{-1}(t)$. But then, there exsits $\delta > 0$ such that $U_{\alpha_i}$ is also a finite cover of $\pi^{-1}(s)$ for all $s \in (t-\delta, t+\delta)$. Thus, by taking a finite covers of $\pi^{-1}(t)$ for finitely many values of $t$, we can construct a finite cover of $M$.

  Since $M$ is compact, by Ehresmann's theorem, $\pi \colon M \to [0,1]$ is a fiber bundle, and the fibers over different points are diffeomorphic. In particular, there is a diffeomorphism $\bar\chi \colon M_0 \to M_1$, where
  \begin{equation*}
    M_0 := \pi^{-1}(0) = \hEins^{1,n} / \phi_0, \quad M_1:= \pi^{-1}(1) = \hEins^{1,n} / \phi_1.
   \end{equation*}
   By uniqueness of the universal cover, $\bar\chi$ lifts to a diffeomorphism $\chi \colon \Eins^{1,n} \to \Eins^{1,n}$. Conversely, because $\chi$ descends to the quotients, it holds that $\phi_1 = \chi \circ \phi_0 \circ \chi^{-1}$, proving the claim.

   \textbf{End of proof.} We have shown that every escaping element $\phi \in \hOG^0(2,n+1)$ is conjugate to its elliptic part $\phi_E$. Since $\phi_E$ is conjugate to an element of $\R \times \SO(n+1)$ with positive (because it is future-escaping) projection on $\R$, we can join it via a path to $(1,\Id) \in \R \times \SO(n+1)$, that is, to the map
   \begin{equation*}
    T_1 \colon \hEins^{1,n} \longrightarrow \hEins^{1,n}, \quad (t,x) \longmapsto (t+1,x).
   \end{equation*}
   Hence, repeating the argument in Claims 1 to 3, we conclude that $\phi_E$, and hence $\phi$, is conjugate to $T_1$ in $\Diff(\hEins^{1,n})$. By transitivity, we conclude that any two future-escaping elements $\phi, \psi \in \hOG^0(2,n+1)$ are conjugate to each other in $\Diff(\hEins^{1,n})$.
  \end{proof}

\subsection{Conformal transformations of $\hEins^{1,n}$ with a fixed point}

We end this section by discussing how conformal transformations of $\hEins^{1,n}$ with a fixed point correspond 1-to-1 to conformal transformations of Minkoswki spacetime $\R^{1,n}$. This is classical, see e.g.\ \cite[Chap.~9]{PeRi}, but for completeness, we give a proof.

\begin{figure} \label{fig:diam}
 \includegraphics[scale=0.15]{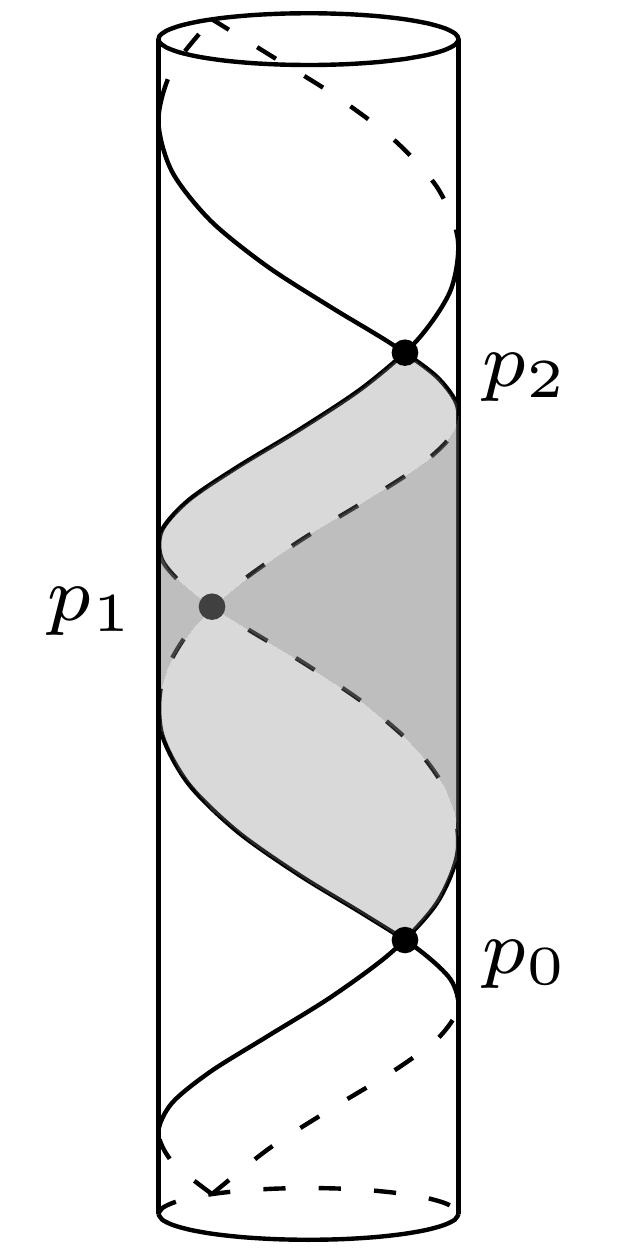}
 \caption{The points $p_0,p_1,p_2 \in \hEins^{1,1}$ as described in Proposition~\ref{prop:fixed}. The spiral-shaped curves are null geodesics going through these points, and the shaded region is the diamond $I(p_0,p_2)$.}
\end{figure}

  \begin{proposition} \label{prop:fixed}
   Let $\phi \in \hOG^\uparrow(2,n+1)$ be a conformal transformation with a fixed point $p = (t,x) \in \hEins^{1,n}$. Then, for all $k \in \Z$, the points
   \begin{equation*}
    p_k := (t + \pi k, \mathrm{sgn}(k) x)
   \end{equation*}
   are fixed, where $\mathrm{sgn}(k)$ is the sign of $k$, and $-x \in \bS^n$ is the antipodal point of $x$. For all $k \in \Z$, $\phi$ preserves the timelike diamond
   \begin{equation*}
    I(p_k,p_{k+2}) := I^+(p_k) \cap I^-(p_{k+2}).
   \end{equation*}
   Moreover, $\phi$ is uniquely determined by the action on one such diamond, acting in the same way on all others.
  \end{proposition}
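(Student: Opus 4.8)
The plan is to derive the entire statement from the refocusing behaviour of null geodesics in $\hEins^{1,n}=\R\times\bS^n$, using that by Theorem~\ref{thm:Haw-Mal} the map $\phi$ preserves the chronological relation $\ll$, the causal relation $\leq$, and (being conformal and time-orientation preserving) the family of future-directed null geodesics.

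\emph{The fixed points.} First I would characterise $p_1$ conformally. A future-directed null geodesic issuing from $p=(t,x)$ has the form $s\mapsto(t+s,c(s))$ with $c$ a unit-speed great circle and $c(0)=x$; since all great circles through $x$ reconverge at the antipode after arclength $\pi$, every such geodesic passes through $(t+\pi,-x)=p_1$. Writing $d$ for the round distance on $\bS^n$, one has explicitly the null cone
\begin{equation*}
 E^+(p):=J^+(p)\setminus I^+(p)=\{(t+d(x,y),y):y\in\bS^n\},
\end{equation*}
and $p_1$ is its unique point with $d(x,y)=\pi$, equivalently the unique point of $E^+(p)$ lying on every future-directed null geodesic from $p$. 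As this description involves only $\ll$, $\leq$ and null geodesics, it is conformally invariant, so $\phi(p)=p$ forces $\phi(p_1)=p_1$. Applying the same argument to $p_1$ (whose future null cone refocuses at $p_2$) and dually to the past null cone of $p$, an induction in both directions shows that $\phi$ fixes every $p_k$. The statement about the diamonds is then immediate: since $\phi$ preserves $\ll$ and fixes $p_k$ and $p_{k+2}$, it maps $I(p_k,p_{k+2})=I^+(p_k)\cap I^-(p_{k+2})$ onto $I^+(p_k)\cap I^-(p_{k+2})$, i.e.\ onto itself.

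\emph{Uniqueness and the ``same way'' statement.} Elements of $\hOG(2,n+1)$ act real-analytically on the connected manifold $\hEins^{1,n}$ (they descend from the linear $\OG(2,n+1)$-action on the isotropic cone, then lift to $\R\times\bS^n$), so by the identity theorem for real-analytic maps $\phi$ is determined by its restriction to any nonempty open set, in particular to a single diamond. To see that the restrictions to different diamonds agree, I would use the isometry $\sigma\colon(t,x)\mapsto(t+\pi,-x)$, which satisfies $\pr(\sigma)=-\Id$ and $\sigma(p_k)=p_{k+1}$, hence maps $I(p_k,p_{k+2})$ conformally onto $I(p_{k+1},p_{k+3})$. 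Because $-\Id$ is central in $\OG(2,n+1)$ and $\ker\pr=\langle T_{2\pi}\rangle$ is central, the commutator $\phi\mapsto\sigma\phi\sigma^{-1}\phi^{-1}$ is a continuous homomorphism $\hOG(2,n+1)\to\langle T_{2\pi}\rangle\cong\Z$; it vanishes on the identity component and factors through the finite group $\pi_0$, hence is trivial, so $\sigma$ is central. Commuting $\phi$ past $\sigma$ then yields $\phi|_{I(p_{k+1},p_{k+3})}=\sigma\circ\phi|_{I(p_k,p_{k+2})}\circ\sigma^{-1}$, which is the precise sense in which $\phi$ acts in the same way on all diamonds.

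\emph{Main obstacle.} I expect the crux to be the first step: formulating the focal point $p_1$ through a manifestly conformally invariant property and verifying that it is genuinely the \emph{unique} such point, so that $\phi$ is forced to fix it, rather than the essentially formal remaining steps. Once the explicit description of $E^+(p)$ is in hand this is not hard, but it is the place where the special (refocusing) geometry of $\hEins^{1,n}$ is really used. A secondary technical point is the centrality of $\sigma$, which I would dispose of by the homomorphism-to-$\Z$ argument above.
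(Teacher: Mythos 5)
Your proposal is correct, and its first half coincides with the paper's own argument: the paper likewise derives $\phi(p_k)=p_k$ from the refocusing of null geodesics (all future-directed null geodesics from $p=(t,x)$ meet again at $(t+\pi,-x)$) together with conformal invariance of null geodesics, and obtains preservation of the diamonds from preservation of $\ll$. Two small remarks there: your characterisation of $p_1$ as ``the unique point of $E^+(p)$ lying on every future null geodesic from $p$'' should exclude $p$ itself, which also lies on all of them --- harmless, since $\phi$ fixes $p$ and is injective; and your formula $p_k=(t+k\pi,(-1)^k x)$, implicit in $\sigma(p_k)=p_{k+1}$, silently corrects a typo in the statement, whose $\mathrm{sgn}(k)\,x$ is inconsistent with the paper's own proof. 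Where you genuinely diverge is the uniqueness/``same way'' part. The paper argues causally: by continuity, the action on one diamond determines $\phi$ on its boundary; adjacent diamonds share the null boundary piece $\mathcal{I}_k^+=\mathcal{I}_{k+1}^-$; and a point of a diamond is determined by the cut its light cone traces on that boundary, so the boundary data propagates the action to the next diamond, and induction along the chain determines $\phi$ globally while exhibiting the action on consecutive diamonds as ``the same'' under the natural Minkowski identification. You instead get uniqueness from real-analyticity of the $\hOG(2,n+1)$-action plus the identity theorem, and the ``same way'' statement from centrality of $\sigma\colon(t,x)\mapsto(t+\pi,-x)$; both steps are sound (your commutator-homomorphism argument for centrality works, using that $\ker\pr$ is central and lies in the identity component, so that $\pi_0(\hOG(2,n+1))$ is finite), and your formulation in fact agrees with the paper's, since $\partial I^+(x)=\partial I^-(\sigma(x))$ shows that conjugation by $\sigma$ implements exactly the paper's boundary identification of adjacent diamonds. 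The trade-off: your route is shorter but leans on the algebraic/analytic origin of $\hOG(2,n+1)$ and its group structure, whereas the paper's purely causal propagation argument shows how an arbitrary element of $\Conf(\R^{1,n})$ extends diamond-by-diamond, which is what is exploited later, e.g.\ in Example~\ref{ex:notsubgroup}.
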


  Note that a diamond $I(p_k,p_{k+2})$ is conformally diffeomorphic to Minkowski spacetime $\R^{1,n}$. This is the reason for introducing the (compact) Einstein universe as the conformal compactification of Minkowski spacetime. The proposition thus states that elements of $\hOG(1,n+1)$ with a fixed point are (up to specifying the fixed point) the same as conformal transformations of Minkowski. See Figure~\ref{fig:Mink} for an example.

  \begin{proof}
   Consider $\phi$ and $p=(t,x)$ as in the statement. Because geodesics in the product $\R \times \bS^n$ are precisely products of geodesics in the factors, all null geodesics emanating from $p$ towards the future intersect again at $p_1 = (t+\pi,-x)$ for the first time, and then periodically as described by the sequence $(p_k)_k$ (also towards the past). Because null geodesics are preserved by conformal transformations, $\phi$ preserving $p$ implies that the entire sequence $(p_k)_k$ is preserved by $\phi$. Moreover, preservation of the causal relation implies that the timelike diamonds $I(p_k, p_{k+2})$ are preserved. We call this a chain of diamonds. Note that the closures of all diamonds in the chain cover all of $\hEins^{1,n}$.

   Let $F \colon I(p,p_2) \to \R^{1,n}$ be a conformal diffeomorphism. Then $\tilde{\phi} := F \circ \phi \circ F^{-1}$ is an element in $\Conf(\R^{1,n})$. It remains to show that $\tilde{\phi}$ uniquely determines $\phi$. By continuity, knowledge of $\tilde{\phi}$ determines $\phi$ on the boundary $\partial I(p,p_2)$. We show the converse, namely that the behaviour of $\phi$ on $\partial I(p,p_2)$ uniquely determines the behaviour of $\phi$ on the next diamond in the chain, $I(p_1,p_3)$. Then, it follows by induction that $\phi$ is uniquely determined on all of $\hEins^{1,n}$.

   To see this, we need to understand the structure of $\partial I(p_k,p_{k+2})$. Following \cite[Chap.~9]{PeRi}, we have
   \begin{equation*}
    \partial I(p_k,p_{k+2}) = \mathcal{I}_k^- \cup \mathcal{I}_k^+ \cup \{p_k,p_{k+1},p_{k+2}\},
   \end{equation*}
   where $\mathcal{I}_k^+$ are the future endpoints of null geodesics inextendible in $I(p_k,p_{k+2})$, and $\mathcal{I}_k^-$ the past endpoints. Because the diamonds are stacked on top of each other, it holds that $\mathcal{I}_k^+ = \mathcal{I}_{k+1}^-$. Moreover, because $I(p_k,p_{k+2})$ is conformal to Minkowski spacetime, there is a natural identification of $\mathcal{I}_k^+$ and $\mathcal{I}_k^-$, as follows. Every point $q \in \mathcal{I}_k^+$ is uniquely determined by its past $I^-(q)$, which equals $I^-(\gamma)$ for any null geodesic $\gamma$ with endpoint $q$. In Minkowski spacetime, the past of a null geodesic is the region that lies below a lightlike plane. The region above the same lightlike plane corresponds to the future of the same null geodesic, and hence to a unique point in $\mathcal{I}_k^-$.

   Now, a point $x \in I(p,p_2)$ is uniquely determined by either of $A(x) := \partial I^\pm(x) \cap \mathcal{I}^\pm_0$. Therefore, there is a unique point $y \in I(p_1,p_3)$ such that $\partial I^-(y) \cap \mathcal{I}^-_1 = A(x)$. By the natural identification between $\mathcal{I}^+_0$ and $\mathcal{I}^-_0$, $x$ and $y$ represent the same point in Minkowski spacetime. When applying $\phi$, we see that the same is true for $\phi(x)$ and $\phi(y)$, since both are determined uniquely by $\phi(A(x))$. We conclude that the action of $\phi$ on each diamond is the same.
  \end{proof}

 A natural question here is how the Jordan decomposition of elements in $\hOG^\uparrow(2,n+1)$ (or $\OG(2,n+1)$) with a fixed point is related to the different types of conformal transformations of Minkowski spacetime $\R^{1,n}$. The following can be found in \cite[p.~56]{FraPhD}:
 \begin{itemize}
  \item Homotheties $\R^{1,n} \to \R^{1,n},\ x \mapsto c x$ for $c >0$ correspond to hyperbolic elements.
  \item Linear isometries of $\R^{1,n}$ (i.e.\ elements of $\OG(1,n)$) correspond to the stabilizer of a Lorentzian $2$-plane in $\R^{2,n+1}$.
  \item Translations in $\R^{1,n}$ correspond to parabolic elements.
 \end{itemize}

  Finally, we use the chain of diamonds to give an example of non-escaping conformal transformations of $\hEins^{1,n}$ whose product is escaping.

  \begin{example} \label{ex:notsubgroup}
   Let $p_0 = (0,x_0)$ and $q_0 = (\pi,x_0)$ be points in $\hEins^{1,n}$, and let $\phi$ and $\psi$ be the elements in $\hOG^\uparrow(2,n+1)$ that fix $p_0$ and $q_0$ respectively, and act by a time-translation of Minkowski on the corresponding diamonds (see Figure~\ref{fig:Mink}). Then
   \begin{equation*}
    \phi(t,x_0) = (t+a(t),x_0), \quad \psi(t,x_0) = (t+b(t),x_0)
   \end{equation*}
  where $a(t),b(t) \geq 0$ and $a(t) = 0$ only if $t = 2 \pi k$, while $b(t) = 0$ only if $t =\pi + 2 \pi k$ (in both cases, $k \in \Z$). It follows that $\phi \circ \psi$ does not have any fixed points. But then, the sequence $(\phi \circ \psi)^k(p_0)$ cannot converge (because the limit would be a fixed point), so the $t$-coordinate must be unbounded along the sequence, implying that $\phi \circ \psi$ is future-escaping.
  \end{example}

  \begin{figure}
  \centering
   \includegraphics[scale=1.25]{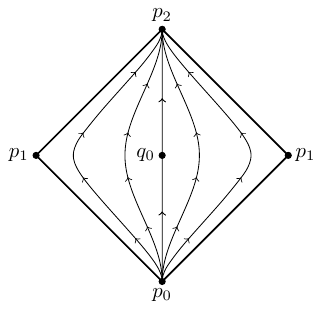}
   \caption{The flowlines of a time-translation of Minkowski $\R^{1,1}$ mapped onto a diamond $I(p_0,p_2)$ in $\hEins^{1,1}$.} \label{fig:Mink}
  \end{figure}

\section{Essentiality} \label{sec:essentiality}

A conformal transformation $\phi \in \Conf^\uparrow(M,g)$ is called \emph{inessential} if there exists a metric $h$ conformal to $g$ (i.e.\ $h = \Omega g$ for a smooth positive function $\Omega$) such that $\phi$ is an isometry for $h$. Otherwise, $\phi$ is called \emph{essential}. Similarly, a subgroup $G \subset \Conf^\uparrow(M,g)$ is called inessential if there exists a metric $h$ conformal to $g$ such that $G \subset \Isom^\uparrow(M,h)$, and essential otherwise.

\subsection{The Lichnerowicz conjecture}

The central question is: Which pseudo-Riemannian manifolds have essential conformal group? In Riemannian signature, Lichnerowicz conjectured that this only happens in highly symmetrical cases. Indeed, Ferrand later proved that if $\Conf^\uparrow(M,g)$ is essential, then $(M,g)$ is conformally diffeomorphic to either the round sphere or Euclidean space. (See also previous work by Ferrand \cite{Fer69} and Obata \cite{Oba} assumping compactness, and later proofs by Schoen \cite{Sch} and Frances \cite{FraENS}.)

In pseudo-Riemannian signatures, the situation is more complicated, and has been investigated in the context of Gromov's theory of rigid geometrical structures \cite{Gro}. In particular, the following conjecture stands out, which has been dubbed the pseudo-Riemannian, or generalized, Lichnerowicz conjecture.

\begin{conjecture}[{\cite[p.~96]{DAGr}}] \label{conj:genLich}
 If $(M,g)$ has essential conformal group, then $(M,g)$ is conformally flat.
\end{conjecture}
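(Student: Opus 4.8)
\emph{Strategy.} The plan is to follow the Cartan-geometric and dynamical method underlying the known partial results (the Riemannian case \cite{FraENS} and its rigid-structures framework \cite{Gro}), reducing the statement to a local curvature-vanishing phenomenon at a point of non-properness and then propagating conformal flatness globally. First I would replace essentiality by non-properness of the conformal action: as in the Riemannian theory and the Lorentzian analog Proposition~\ref{prop:inessential}, a conformal group preserving no metric in the class cannot act properly, since properness would allow one to average a fixed representative over the (then compact) stabilizers. Assuming $\dim M = p+q \geq 3$, the conformal class of $g$ determines a \emph{canonical normal Cartan geometry} $(\mathcal{B} \to M,\ \omega)$ modeled on the M\"obius space $\Eins^{p,q}$, with structure group a parabolic subgroup of $\OG(p+1,q+1)$. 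Non-properness of $\Conf^\uparrow(M,g)$ then yields a \emph{holonomy sequence}: points $p_k \to p$, maps $\phi_k \in \Conf^\uparrow(M,g)$ with $\phi_k(p_k) \to q$, and, after choosing frames $b_k \in \mathcal{B}$ over $p_k$, elements $g_k \in \OG(p+1,q+1)$ recording $\phi_k$ in these frames, whose norm diverges.

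The second step is the dynamical heart. Writing a Cartan (KAK) decomposition $g_k = l_k a_k r_k$ with $a_k$ in a closed Weyl chamber and $l_k, r_k$ bounded, the divergence is carried by $a_k \to \infty$ along a singular direction. The Cartan curvature $\kappa$ of $(\mathcal{B},\omega)$ is a section of a $\mathrm{P}$-equivariant bundle whose vanishing at a point is equivalent to local conformal flatness there (it encodes the Weyl tensor for $p+q \geq 4$ and the Cotton tensor for $p+q = 3$). Since the $\phi_k$ are conformal they preserve $\kappa$, so $\kappa(b_k)$ is transported to $\kappa(\phi_k b_k)$ by the adjoint action of $g_k$. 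Passing to the limit along the contracting eigendirections of $a_k$, while $\kappa(p_k) \to \kappa(p)$, forces a rigidity identity: the limiting holonomy must annihilate $\kappa(p)$, and a weight analysis shows this is possible only if $\kappa(p) = 0$. I would then upgrade this to an \emph{open} statement---that $\kappa$ vanishes on a neighborhood of $p$---by running the same argument along the nearby orbit points and invoking the local embedding of a neighborhood of $p$ into $\Eins^{p,q}$.

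\emph{Globalization and main obstacle.} Let $U \subseteq M$ be the open, $\Conf^\uparrow$-invariant set where the Weyl/Cotton obstruction vanishes; the previous step gives $U \neq \emptyset$, and the goal is $U = M$. In the real-analytic category this is immediate, as conformal flatness is an analytic condition propagating by unique continuation, and this case serves as the model for the general argument. The main obstacle is the smooth, non-compact case, where two genuine difficulties must be overcome. First, in indefinite signature a diverging $a_k$ need not be contracting: it can escape within a bounded-distortion \emph{lightlike-rotation} direction for which the weight analysis does not force $\kappa(p)=0$. The plan here is to show that such isotropic divergence nonetheless produces a nontrivial $\Conf^\uparrow$-invariant lightlike structure along the orbit, which can be bootstrapped to recover the contracting case or to constrain $\kappa$ directly. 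Second, to pass from the flat open set $U$ to all of $M$ smoothly, the strategy is to exhibit $U$ as conformally a domain in $\Eins^{p,q}$ and extend via the developing map, using the non-proper dynamics to cover $M$ by conformal copies of the model. I expect the control on the \emph{type} of holonomy divergence, together with this smooth globalization, to be the decisive steps on which a full proof of the stated conjecture turns.
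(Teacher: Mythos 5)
The statement you set out to prove is labeled a \emph{conjecture} in the paper, and the paper contains no proof of it --- for the decisive reason that, as stated (for general pseudo-Riemannian $(M,g)$, with no compactness, causality, or signature hypothesis), it is \emph{known to be false}. The paper itself records the counterexamples immediately after the statement: Frances \cite{FraTG} constructed compact counterexamples in every signature $(p,q)$ with $p,q \geq 2$, Cort\'es and Leistner \cite{CorLei} gave locally conformally pseudo-K\"ahler examples, and there are non-compact Lorentzian counterexamples \cite{FraESI}. Only the compact Lorentzian case remains open, with partial results by Melnick--Pecastaing \cite{MelPec} and Frances--Melnick \cite{FraMel}; the paper's own contribution is to propose restricted variants (Conjectures \ref{conj:Lich1} and \ref{conj:Lich2}) for causal spacetimes satisfying the NOH, supported by evidence (Propositions \ref{prop:inessential} and \ref{prop:achronalset}, Theorem \ref{thm:finitepi1}) rather than by a proof. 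So no blind proof attempt of the statement as given can succeed, and there is no proof in the paper to compare yours against.

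That said, your outline is a faithful sketch of the Cartan-geometric strategy behind the genuine partial results, and you correctly isolate where it breaks: the ``bounded-distortion lightlike-rotation'' directions in the Cartan decomposition of the holonomy sequence. But this is not a technical hurdle that can be bootstrapped away --- it is precisely the mechanism the counterexamples exploit. In indefinite signature, a sequence $a_k \to \infty$ in a closed Weyl chamber can diverge along directions whose adjoint action on the curvature module has nontrivial bounded (even isometric) weight spaces, so the transport identity for $\kappa$ does not force $\kappa(p) = 0$; essential dynamics then coexist with nonvanishing Weyl curvature, as in \cite{FraTG}. Your opening step also needs care: the equivalence of essentiality with non-properness is established in the paper only for causal NOH spacetimes (Proposition~\ref{prop:inessential}, which leans on the properness results of \cite{GaZe}); in general signature one direction holds by an averaging argument, but the equivalence is not available for free. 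If you want a target your strategy could plausibly reach, redirect it at Conjecture~\ref{conj:Lich1}, where the NOH supplies global hyperbolicity, compact Cauchy surfaces, and the escaping/non-escaping dichotomy of Theorem~\ref{thm:escap} as genuine additional leverage on the holonomy dynamics --- that conjecture is open, and even there the lightlike-divergence issue you flagged is the heart of the matter.
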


The current status of the conjecture is the following:
\begin{itemize}
 \item It is true in Riemannian signature, as discussed above.
 \item There are compact counterexamples due to Frances \cite{FraTG} in all signatures $(p,q)$ with $p,q \geq 2$. There are also locally conformally pseudo-K\"ahler examples by Cort\'es and Leistner \cite{CorLei}.
 \item There are non-compact Lorentzian counterexamples (see \cite[p.~5]{FraESI} and references therein).
 \item The conjecture remains open for compact Lorentzian manifolds, having been proven under additional assumptions by Melnick and Pecastaing \cite{MelPec} and Frances and Melnick \cite{FraMel}.
\end{itemize}

We propose that, in the Lorentzian setting, an alternative to the assumption of compactness of $M$, is to assume global hyperbolicity with compact Cauchy surfaces. A stronger assumption (by Lemma~\ref{lem:NOHCauchy}) is the No Observer Horizons condition (NOH). In this context, Conjecture~\ref{conj:genLich} thus becomes:

\begin{conjecture} \label{conj:Lich1}
 Let $(M,g)$ be a causal spacetime satisfying the NOH. Then, if $\Conf^\uparrow(M,g)$ is essential, $(M,g)$ is conformally flat.
\end{conjecture}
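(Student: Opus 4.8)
The plan is to reduce the statement to a non-properness phenomenon and then run a curvature blow-up argument in the canonical Cartan geometry associated to the conformal structure. By Proposition~\ref{prop:inessential}, essentiality of $\Conf^\uparrow(M,g)$ is equivalent to non-properness of its action. A crucial observation is that, by Theorem~\ref{thm:escap}, every escaping element acts properly; hence the failure of properness cannot be produced by a single cyclic subgroup, and must instead come from a genuinely accumulating family of transformations. Using the sequential characterization in Proposition~\ref{prop:properactions}(ii), I would extract a sequence $(\phi_i)_i \subset \Conf^\uparrow(M,g)$ together with points $p_i \to p$ and $\phi_i(p_i) \to q$ such that $(\phi_i)_i$ has no convergent subsequence. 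This is the analogue of the \emph{holonomy sequence} appearing in the Riemannian proofs of the Lichnerowicz conjecture.

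Next, I would pass to the canonical normal Cartan geometry modeled on the homogeneous space $\Eins^{1,n} = \OG(2,n+1)/P$ that every conformal Lorentzian structure carries, for $P$ a suitable parabolic subgroup. Lifting $(\phi_i)_i$ to the Cartan bundle yields a holonomy sequence whose dynamics I would analyze following Frances, seeking an attractor--repeller (``north--south'') behaviour at $p$. The point of this step is that the Weyl curvature, viewed as an equivariant tensor on the bundle, transforms under the Cartan-geometric action; if it did not vanish at $p$, then composing with the diverging holonomy would rescale it without bound, contradicting smoothness of the original structure. This would force the Weyl tensor, the obstruction to conformal flatness, to vanish at $p$, and then, running the same argument along the $\Conf^\uparrow$-orbit of $p$, on an open dense set.

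Having established vanishing of the Weyl tensor on an open set, the remaining task is propagation to all of $M$. Here I would exploit the geometry forced by essentiality: by Proposition~\ref{prop:achronalset} an essential transformation preserves an achronal set, which should localize where the non-proper dynamics concentrate, while the NOH together with the cocompactness of escaping elements from Theorem~\ref{thm:escap} should let me spread conformal flatness across Cauchy slices. In the case of finite fundamental group one can then invoke Theorem~\ref{thm:finitepi1} to finish, identifying $(M,g)$ conformally with $\hEins^{1,n}$ and thereby recovering Conjecture~\ref{conj:Lich2} as well.

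I expect the propagation step and the Lorentzian blow-up analysis to be the main obstacles. In Lorentzian signature the lightcone dynamics are degenerate, so the attractor--repeller picture is far less rigid than in the Riemannian case, and the invariant achronal set need not reduce to a single point; controlling the holonomy sequence near this set is delicate. Moreover, the passage to the universal cover needed to handle infinite fundamental group introduces a deck-transformation group that must be shown compatible with the conformal dynamics. The curvature-rescaling estimate---the analogue of the normal-families argument of Ferrand and the Obata-type integration---is the technical crux, and it is precisely the part that remains out of reach, which is why the statement is posed as a conjecture rather than a theorem.
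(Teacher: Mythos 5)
The statement you are addressing is a \emph{conjecture}: the paper does not prove it, and explicitly presents it as open (offering only partial evidence, namely Proposition~\ref{prop:inessential}, Proposition~\ref{prop:achronalset}, and Theorem~\ref{thm:finitepi1}). Your proposal is likewise not a proof but a research program, and you concede as much in your final paragraph: the curvature blow-up estimate for the holonomy sequence in the Cartan geometry --- the exact analogue of the step that Frances, Melnick--Pecastaing and others have only managed under additional hypotheses in the compact Lorentzian case --- is left entirely open, as is the propagation of Weyl-tensor vanishing from a neighbourhood of the accumulation point to all of $M$. Since the entire content of the conjecture lives in precisely those two steps, there is nothing here that could be checked against, or substituted for, an argument: the proposal reduces the conjecture to a harder-looking open problem and stops.

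Beyond the acknowledged gaps, one concrete inference in your first paragraph is wrong. From the fact that escaping elements act properly (Theorem~\ref{thm:escap}, Corollary~\ref{cor:escap}) you conclude that ``the failure of properness cannot be produced by a single cyclic subgroup, and must instead come from a genuinely accumulating family of transformations.'' This does not follow, and it is false in the paper's central example: in $\hEins^{1,n}$, a non-escaping, non-elliptic element (e.g.\ a parabolic or hyperbolic transformation fixing a point, as in Section~\ref{sec:hyperpara} and Proposition~\ref{prop:fixed}) is essential, so by Proposition~\ref{prop:inessential} the closure of the cyclic group it generates already acts non-properly. Escaping elements are proper, but non-escaping ones need not be; indeed the interesting essential dynamics are generated by single transformations with fixed points. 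This matters for your strategy, because the holonomy sequence you wish to analyze can simply be the powers $(\phi^k)_k$ of one such map accumulating at its fixed-point set, and the invariant achronal set of Proposition~\ref{prop:achronalset} is then the natural locus for the attractor--repeller analysis --- a quite different (and in some respects more tractable) situation than an ``accumulating family'' with no distinguished generator.
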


Recall also the stronger conjecture made in the introduction:

\begin{Lconjecture}
 Let $(M,g)$ be a causal spacetime satisfying the NOH. Then $\Conf^\uparrow(M,g)$ is essential if and only if $(M,g)$ is conformally diffeomorphic to the Einstein static universe $\hEins^{1,n}$.
\end{Lconjecture}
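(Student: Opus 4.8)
The statement is an equivalence, so I would treat the two directions separately. The backward direction --- that $\hEins^{1,n}$ is essential --- is the easy one, and I would deduce it from Proposition~\ref{prop:inessential}: it suffices to exhibit a non-proper sequence in $\hOG^\uparrow(2,n+1)$. A hyperbolic element $H$ (in the sense of the Jordan decomposition, fixing a point and acting as a Minkowski homothety on the diamonds of Proposition~\ref{prop:fixed}) exhibits attracting--repelling (``North--South'') dynamics on $\hEins^{1,n}$: taking $p_i = H^{-i}(q)$ for a fixed generic $q$, one has $p_i \to p_-$ while $H^i(p_i) = q$ converges, yet $(H^i)_i$ has no convergent subsequence. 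Hence the action is non-proper and $\hEins^{1,n}$ is essential. (Alternatively, one checks directly that $H$ is essential because its derivative at the fixed point scales $g$ by a factor $\neq 1$, which no metric in the conformal class can turn into an isometry.)

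For the forward direction I would follow the structure of the Riemannian proofs of Ferrand, Schoen and Frances, adapted to the NOH setting, and reduce everything to Theorem~\ref{thm:finitepi1}. The plan has three stages. First, essentiality gives a non-proper action by Proposition~\ref{prop:inessential}, hence a sequence $(g_i)$ with no convergent subsequence together with points $p_i \to p$ and $g_i(p_i) \to q$; by Proposition~\ref{prop:achronalset} an essential element preserves an achronal set, which I would use to organize the blow-up dynamics along the lightcone structure. Second, I would show that the non-proper dynamics force conformal flatness: near the attracting point the conformal factors of $g_i^* g$ degenerate, and since the Weyl tensor $W$ (or, in the exceptional low dimension, the Cotton tensor) is conformally invariant and thus carried along by the $g_i$, a stability and rescaling argument forces it to vanish at the limit point, after which the orbit of the non-proper sequence propagates vanishing to all of $M$. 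Third, conformal flatness yields a $(\hOG(2,n+1),\hEins^{1,n})$-structure, that is, a developing map $\delta\colon\widetilde M \to \hEins^{1,n}$ with holonomy $\rho\colon\pi_1(M)\to\hOG^\uparrow(2,n+1)$; here I would invoke the NOH hypothesis (global hyperbolicity with compact Cauchy surfaces and single-point causal boundaries, via Lemma~\ref{lem:NOHCauchy}) to show that $\delta$ is a global conformal diffeomorphism and that the holonomy group is finite, so that $\pi_1(M)$ is finite and Theorem~\ref{thm:finitepi1} applies to conclude $M \cong \hEins^{1,n}$.

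The main obstacle is the second stage, conformal flatness, which is precisely the content of the (open) Conjecture~\ref{conj:Lich1}; in Lorentzian signature the rescaling arguments that succeed on $\bS^n$ are delicate because the blow-up happens along degenerate (null) directions rather than in a definite metric, so controlling the limit requires genuinely new input beyond the causal tools developed in this paper. A secondary but still substantial obstacle is proving finiteness of $\pi_1$ in the third stage: one must rule out essential examples of the form $\R\times(\bS^n/\Gamma)$ and the flat models $\R\times\Sigma$ with $\pi_1(\Sigma)$ infinite. I expect these to be excluded by showing that a nontrivial deck group forces the residual conformal group to commute with $\Gamma$ and thereby to act properly, hence (by Proposition~\ref{prop:inessential}) inessentially --- but making this rigorous for all $\Gamma$, together with the flatness step, is exactly why the statement is only conjectured here.
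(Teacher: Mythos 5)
The statement you were asked to prove is not a theorem of the paper: it is Conjecture~\ref{conj:Lich2}, which the paper explicitly leaves open, offering only partial evidence (Propositions~\ref{prop:inessential} and \ref{prop:achronalset}, Theorem~\ref{thm:finitepi1}). So there is no proof of the paper to compare yours against, and your proposal --- correctly --- does not claim to close the gap. Your backward direction is sound and matches what the paper's results give: by Proposition~\ref{prop:inessential} it suffices to exhibit non-properness, and a hyperbolic element $H$ fixing a point (acting as a Minkowski homothety on the diamonds of Proposition~\ref{prop:fixed}) yields $p_i = H^{-i}(q) \to p_0$ with $H^i(p_i) = q$ constant while $(H^i)_i$ has no convergent subsequence in the group; the paper packages the same conclusion in its corollary that an element of $\Conf^\uparrow(\hEins^{1,n})$ is inessential if and only if it is escaping or elliptic, so non-escaping hyperbolic or parabolic elements with fixed points are essential.

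For the forward direction, you locate the two genuine obstacles exactly where they lie, and they are precisely why the statement is only a conjecture. First, ``essential $\Rightarrow$ conformally flat'' is Conjecture~\ref{conj:Lich1}, open even for compact Lorentzian manifolds; your proposed Weyl-tensor rescaling argument is the step with no known Lorentzian analogue, since the blow-up of a non-proper sequence occurs along degenerate null directions rather than in a definite metric, and the known counterexamples to the generalized Lichnerowicz conjecture in other settings show the argument cannot be purely formal. Second, your third stage needs $\pi_1(M)$ finite before Theorem~\ref{thm:finitepi1} applies, and the paper does not establish this either: it explicitly notes that the case of infinite fundamental group (non-compact Cauchy surfaces of the universal cover, studied by Sma\"i) lacks a comparable classification, and Theorem~\ref{thm:finitepi1} takes \emph{both} conformal flatness and finiteness of $\pi_1$ as hypotheses. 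Your commutation idea for excluding quotients $\R \times (\bS^n/\Gamma)$ is in the spirit of the final part of the paper's proof of Theorem~\ref{thm:finitepi1}, where $\tilde\phi^m$ centralizing $\Gamma$ forces a proper action, but that argument presupposes flatness and the finiteness of $\Gamma$, so it cannot bootstrap either missing step. In short: your assessment of the problem's status is accurate and your roadmap mirrors the paper's own evidence, but the proposal is a research plan, not a proof --- the same two gaps the authors leave open remain open here.
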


While the equivalent to Conjecture~\ref{conj:Lich1} for compact Lorentzian manifolds remains open, the equivalent to Conjecture~\ref{conj:Lich2} was shown to be false by Frances \cite{FraMA,FraESI}, who found an infinite family of non-conformally related compact Lorentzian manifolds with essential conformal group.

\subsection{Essentiality and properness}

The following proposition has a Riemannian analogue, which plays a role in Ferrand's proof of the Lichnerowicz conjecture (see \cite[Thm.~A1]{Fer}).

\begin{proposition} \label{prop:inessential}
 Let $(M,g)$ be a causal spacetime satisfying the NOH, and $G \subset \Conf^\uparrow(M,g)$ a closed subgroup. Then, $G$ is inessential if and only if it acts properly on $M$.
\end{proposition}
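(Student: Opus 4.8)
The plan is to prove the two implications separately. For the forward direction (inessential implies proper) I would lean on the structure theory of \cite{GaZe}; the substance of the proposition is really the converse (proper implies inessential), which I would obtain by an averaging argument that manufactures a $G$-invariant metric \emph{inside} the conformal class $[g]$.

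\textbf{Forward direction.} Suppose $G$ is inessential, so $G \subseteq \Isom^\uparrow(M,h)$ for some $h = \Omega g$. The NOH depends only on the causal structure, which is a conformal invariant, so $(M,h)$ again satisfies the hypotheses and by \cite{GaZe} its isometry group splits as $L \ltimes N$, with $L$ acting by time translations and $N$ preserving a Cauchy time function $\tau$. I would first note that $N$ is compact: an element of $N$ preserves $\tau$, hence each compact level set $\Sigma_0$, and restriction to $\Sigma_0$ gives an injective homomorphism of the closed subgroup $N$ into $\Isom(\Sigma_0,h_0)$ — the isometry group of a compact Riemannian manifold, which is compact (injectivity because an isometry fixing a spacelike hypersurface pointwise and preserving time orientation is the identity). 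The group $L$ acts properly because $\tau(\ell_c x) = \tau(x) + c$ forces boundedness of the shifts $c_i$ from convergence of $\tau$. Properness of $L \ltimes N$ on $M$ then follows from a two-sequence argument: given $p_i \to p$ and $g_i p_i \to q$ with $g_i = \ell_i n_i$, compactness of $N$ gives $n_i \to n$ (subsequence), whence $n_i p_i \to np$ and $\ell_i(n_i p_i) \to q$, so properness of $L$ yields a convergent subsequence of $\ell_i$, hence of $g_i$. Since $G$ is closed, it also acts properly (restriction of a proper map to a union of fibers over a closed subset).

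\textbf{Converse direction.} Suppose $G$ acts properly; I would build $\Omega = e^u > 0$ with $\phi^*(\Omega g) = \Omega g$ for all $\phi \in G$. Writing $\phi^* g = \lambda_\phi\, g$ and $\sigma_\phi := \log\lambda_\phi$, the chain rule gives the cocycle identity $\sigma_{\phi\psi} = \sigma_\phi\circ\psi + \sigma_\psi$, and invariance $\phi^*(e^u g) = e^u g$ is equivalent to the coboundary equation $\sigma_\phi(x) = u(x) - u(\phi(x))$ for all $\phi,x$. As $G$ is a closed subgroup of a Lie group, it is a Lie group with Haar measure, and properness lets me invoke Palais' construction to obtain a nonnegative $f \in C^\infty(M)$ whose support meets each orbit compactly and which, after dividing by the $G$-invariant positive function $x \mapsto \int_G f(\psi x)\,d\psi$, satisfies $\int_G f(\psi(x))\,d\psi = 1$ for all $x$ (with respect to a right Haar measure). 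I then set
\[
 u(x) := \int_G f(\psi(x))\,\sigma_\psi(x)\,d\psi,
\]
which is smooth since the integrand has compact $\psi$-support locally uniformly in $x$. Substituting $\psi = \psi'\phi^{-1}$, using right-invariance of $d\psi$, the cocycle identity, and its consequence $\sigma_{\phi^{-1}}(\phi(x)) = -\sigma_\phi(x)$, one computes $u(\phi(x)) = u(x) - \sigma_\phi(x)$, which is exactly the desired equation. Thus $h := e^u g$ is a smooth Lorentzian metric conformal to $g$ preserved by every $\phi \in G$; since the elements of $G$ preserve the time orientation, $G \subseteq \Isom^\uparrow(M,h)$ and $G$ is inessential.

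I expect the main obstacle to be the converse: passing from a \emph{noncompact} proper action to an invariant metric within the \emph{fixed} conformal class. One cannot simply average the full metric tensor, as that would leave $[g]$; the correct object to average is the scalar cocycle $\sigma_\phi$, and the delicate bookkeeping is choosing the cutoff normalization and the side of the Haar measure so that the change of variables returns precisely the coboundary $u - u\circ\phi$ rather than a twisted variant. A secondary subtlety, in the forward direction, is that properness of an isometry group on the orthonormal frame bundle does not descend to $M$ because the structure group $\OG(1,n)$ is noncompact; it is exactly the NOH hypothesis, via compactness of the Cauchy surfaces and hence of $N$, that bridges this gap.
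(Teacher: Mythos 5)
Your proposal is correct in substance, and its key half takes a genuinely different route from the paper's. The paper disposes of both directions by citation to \cite{GaZe}: for ``inessential $\Rightarrow$ proper'' it invokes \cite[Thm.~1]{GaZe} (the time-orientation preserving isometry group of a causal NOH spacetime acts properly, and NOH is conformally invariant); for ``proper $\Rightarrow$ inessential'' it invokes \cite[Thm.~3.2]{GaZe}, which says that a properly acting group of conformal transformations preserves the differential $d\tau$ of some Cauchy temporal function, so that $h := \Vert d\tau \Vert\, g$ is an invariant metric in the conformal class. Your cocycle-averaging argument replaces this second citation entirely: writing $\phi^* g = e^{\sigma_\phi} g$, normalizing a Palais-type cutoff against right Haar measure, and averaging $\sigma_\psi$ to produce $u$ with $u \circ \phi = u - \sigma_\phi$ is correct --- the change of variables $\psi = \psi'\phi^{-1}$, the cocycle identity, and $\sigma_{\phi^{-1}}\circ\phi = -\sigma_\phi$ all check out --- and it is in fact \emph{more general}: it shows that a closed subgroup of the conformal group of \emph{any} pseudo-Riemannian manifold acting properly is inessential, with no causality or NOH hypothesis. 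What the paper's route buys instead is an invariant metric adapted to the causal structure (built from a Cauchy temporal function, the object used elsewhere in the paper); what yours buys is independence from the NOH-specific machinery of \cite[Thm.~3.2]{GaZe}.

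One caveat on your forward direction: your reconstruction of properness from the splitting $\Isom^\uparrow(M,h) = L \ltimes N$ claims that $N$ is compact because it injects continuously into the compact group $\Isom(\Sigma_0,h_0)$. Injectivity alone does not give compactness (a line winding densely in a torus injects continuously into a compact group); you would additionally need the restriction homomorphism to be a closed embedding, which can be arranged via $1$-jet rigidity of isometries (an isometry preserving $\Sigma_0$, the induced metric, and the time orientation is determined by, and converges together with, its restriction to $\Sigma_0$). The cleaner move --- and the one the paper makes --- is to cite \cite[Thm.~1]{GaZe} directly, which already asserts properness of $\Isom^\uparrow(M,h)$ on $M$; closedness of $G$ then finishes this direction exactly as you say.
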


\begin{proof}
 Suppose that $G$ is inessential. Then $G \subset \Isom^\uparrow(M,h)$, where $(M,h)$ is a causal spacetime satisfying the NOH (because it is conformal to $(M,g)$). By \cite[Thm.~1]{GaZe}, $\Isom^\uparrow(M,h)$, and hence $G$, act properly on $M$.

 Conversely, suppose that $G$ acts properly. Then, by \cite[Thm.~3.2]{GaZe}, $G$ preserves the gradient $d\tau$ of a Cauchy temporal function $\tau \colon M \to \R$. Then, $h := \Vert d\tau \Vert g$ is a Lorentzian metric that is preserved by $G$, where $\Vert d\tau \Vert$ denotes the pseudo-norm of $d\tau$ with respect to the inverse metric of $g$. It follows that $G$ is inessential.
\end{proof}

Based on this result, Conjecture \ref{conj:Lich1} above is equivalent to saying that if $(M,g)$ is not conformally flat, then $\Conf(M,g)$ acts properly. This is precisely the argument in Ferrand's proof of the Riemannian Lichnerowicz conjecture \cite{Fer}. Moreover, we extract the following corollaries.

\begin{corollary} \label{cor:escap}
 Let $(M,g)$ be a causal spacetime satisfying the NOH. Then all escaping elements of $\Conf^\uparrow(M,g)$ are inessential.
\end{corollary}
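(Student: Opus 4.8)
The plan is to reduce the claim to Proposition~\ref{prop:inessential} applied to the cyclic group generated by $\phi$. Let $\phi \in \Conf^\uparrow(M,g)$ be escaping; without loss of generality assume it is future-escaping. By Theorem~\ref{thm:escap}, the action of $\langle \phi \rangle$ is proper (indeed free and cocompact). Since Proposition~\ref{prop:inessential} is stated for \emph{closed} subgroups, the first step is to verify that $\langle \phi \rangle$ is in fact closed in $\Conf^\uparrow(M,g)$.

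To see this, fix a Cauchy temporal function $\tau$. Since $\phi$ is future-escaping, Theorem~\ref{thm:intro1} gives $\tau(\phi^k(p)) \to +\infty$ as $k \to +\infty$ for every $p \in M$; applying the same statement to the (past-escaping) inverse $\phi^{-1}$ yields $\tau(\phi^k(p)) \to -\infty$ as $k \to -\infty$. Now suppose $g = \lim_i \phi^{k_i}$ in $\Conf^\uparrow(M,g)$ for some sequence of integers $k_i$. Evaluating at any point $p$, continuity of the action gives $\phi^{k_i}(p) \to g(p)$, hence $\tau(\phi^{k_i}(p)) \to \tau(g(p)) < \infty$. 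This is incompatible with $k_i \to \pm\infty$, so $(k_i)$ is bounded; after passing to a subsequence it is constant, and therefore $g = \phi^k \in \langle \phi \rangle$. Thus $\langle \phi \rangle$ is closed (and, being countable, discrete).

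With closedness established, the conclusion is immediate: $\langle \phi \rangle$ is a closed subgroup of $\Conf^\uparrow(M,g)$ acting properly, so by Proposition~\ref{prop:inessential} it is inessential, i.e.\ there is a metric $h$ conformal to $g$ with $\langle \phi \rangle \subset \Isom^\uparrow(M,h)$. In particular $\phi$ is an isometry of $h$, so $\phi$ is inessential. The same argument applies verbatim to past-escaping elements.

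The only point that uses the escaping hypothesis is the closedness of $\langle \phi \rangle$ in the second step; everything else is a direct appeal to earlier results, so I expect this to be the main (if mild) obstacle. The mechanism is transparent: the divergence of the time coordinate along every orbit prevents any non-trivial accumulation of the powers $\phi^{k_i}$, a phenomenon that would otherwise occur, for instance, for a transformation whose ``spatial part'' is an irrational rotation, were it not for the time-translation component forced by escaping. Note that the passage $\phi^{k_i} \to g$ $\Rightarrow$ $\phi^{k_i}(p) \to g(p)$ is simply continuity of the action of $\Conf^\uparrow(M,g)$ on $M$, so no delicate analysis of the topology on the conformal group is needed.
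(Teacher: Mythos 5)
Your proposal is correct and takes essentially the same route as the paper: both reduce the statement to Proposition~\ref{prop:inessential} by showing that $\langle \phi \rangle$ is a closed (discrete) subgroup acting properly. The only cosmetic difference is that you cite the properness conclusion of Theorem~\ref{thm:escap} and spell out the closedness argument via divergence of $\tau$ along orbits, whereas the paper verifies properness directly (finiteness of $\Gamma_K$) and states the closedness claim more tersely.
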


\begin{proof}
 Let $\phi$ be escaping. Then $\Gamma := \{ \phi^n \mid n \in \Z \}$ is a discrete (hence closed) subgroup: Otherwise, there would be a converging sequence in $\Gamma$, which would contradict the fact that $\phi$ is escaping. Moreover, $\Gamma$ acts properly, because $\phi$ being escaping implies that for any compact $K \in M$, the set $\Gamma_K := \{\psi \in \Gamma \mid K \cap \psi(K) \neq \emptyset \}$ is finite (hence compact) in $\Gamma$. By Proposition~\ref{prop:inessential}, it now follows that $\Gamma$, and hence $\phi$, are inessential.
\end{proof}

\begin{corollary}
 An element of $\Conf^\uparrow(\hEins^{1,n})$ is inessential if and only if it is escaping or elliptic. Moreover, essential elements always have fixed points.
\end{corollary}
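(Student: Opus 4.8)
The plan is to reduce both implications to the properness criterion of Proposition~\ref{prop:inessential}, applied not to $\phi$ itself but to the closed subgroup $\overline{\langle \phi \rangle}$. The bridge I would use is that every isometry group $\Isom^\uparrow(M,h)$ is closed in $\Conf^\uparrow(M,g)$: hence if $\phi$ is inessential, with $\phi \in \Isom^\uparrow(M,h)$, then $\overline{\langle \phi \rangle} \subset \Isom^\uparrow(M,h)$ as well, so $\overline{\langle \phi \rangle}$ is itself inessential and therefore acts properly. Both directions then become statements about properness of $\overline{\langle \phi \rangle}$, for which the earlier machinery applies directly.

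For the implication \emph{escaping or elliptic $\Rightarrow$ inessential} I would split into two cases. If $\phi$ is escaping, Corollary~\ref{cor:escap} already gives inessentiality. If $\phi$ is elliptic, Remark~\ref{rem:ellproper} tells me that $\overline{\langle \phi \rangle}$ acts properly, so Proposition~\ref{prop:inessential} applied to this closed subgroup yields a conformal metric $h$ with $\overline{\langle \phi \rangle} \subset \Isom^\uparrow(M,h)$; in particular $\phi$ is an $h$-isometry, hence inessential.

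For the converse I would argue by contrapositive: assuming $\phi$ is \emph{neither} escaping \emph{nor} elliptic, I must show it is essential. By Theorem~\ref{thm:classification} the cases are exhaustive, and since case~(1) is elliptic and case~(3) is escaping, the only remaining possibility is case~(2); thus $\phi$ is non-escaping and possesses a fixed point $p_\infty$. I then test properness of $\overline{\langle \phi \rangle}$ via Proposition~\ref{prop:properactions}(ii) with the constant sequence $p_i = p_\infty$ and $g_i = \phi^i$: both $p_i$ and $g_i(p_i) = p_\infty$ converge, so properness would force a convergent subsequence of $(\phi^i)_i$. To rule this out I would show $\Vert \pr(\phi)^i \Vert \to \infty$ in $\OG(2,n+1)$, which is exactly where non-ellipticity enters: in the Jordan decomposition $\pr(\phi) = EPH$ either $H \neq 1$, giving an eigenvalue $\lambda^i$ with $\lambda > 1$ (Lemma~\ref{lem:hyper}), or $P \neq 1$, giving a nontrivial unipotent part $P^i$ growing polynomially, while the elliptic factor $E^i$ stays bounded with bounded inverse. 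Since $\pr$ is continuous, no subsequence of $(\phi^i)$ can then converge, so $\overline{\langle \phi \rangle}$ does not act properly and $\phi$ is essential. The \emph{moreover} statement follows at once: an essential element is, by the equivalence just established, neither escaping nor elliptic, hence falls under case~(2) of Theorem~\ref{thm:classification} and therefore has a fixed point.

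The main obstacle I anticipate is the growth estimate $\Vert \pr(\phi)^i \Vert \to \infty$ in the non-elliptic case, together with the bookkeeping needed to transfer it between the cover $\hOG(2,n+1)$ and $\OG(2,n+1)$: I must ensure that non-compactness of $\overline{\langle \pr(\phi)\rangle}$ downstairs genuinely obstructs convergence of $(\phi^i)$ upstairs. This is harmless, since a hypothetical compact closure $\overline{\langle \phi \rangle}$ would map under the continuous $\pr$ to a compact set containing $\langle \pr(\phi)\rangle$, contradicting the divergence of the norms.
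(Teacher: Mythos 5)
Your proposal is correct, but for the converse direction it takes a genuinely different route from the paper. The forward implication is essentially the same in both: Corollary~\ref{cor:escap} handles escaping elements, and for elliptic ones you use properness of $\overline{\langle \phi \rangle}$ (Remark~\ref{rem:ellproper}) plus Proposition~\ref{prop:inessential}, where the paper instead places non-escaping elliptic elements inside a compact subgroup via Lemma~\ref{lem:ellincover}. For the converse, the paper argues directly: if $\phi$ is inessential then $\phi \in \Isom^\uparrow(M,h)$ for some conformal metric $h$, and since $(M,h)$ is a causal NOH spacetime, the splitting $\Isom^\uparrow(M,h) = L \ltimes N$ of \cite{GaZe} forces $\phi$ to be either escaping or time-function preserving, the latter case being elliptic by Lemma~\ref{lem:ellincover}. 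You instead argue by contrapositive: a non-elliptic, non-escaping element falls into case (2) of Theorem~\ref{thm:classification} by exhaustiveness, hence has a fixed point $p_\infty$, and the constant orbit at $p_\infty$ together with the divergence $\Vert \pr(\phi)^i \Vert \to \infty$ violates criterion (ii) of Proposition~\ref{prop:properactions}, so $\overline{\langle \phi \rangle}$ acts non-properly and is essential by Proposition~\ref{prop:inessential}. What your route buys is a concrete mechanism for essentiality---non-compact isotropy at a fixed point---without invoking the semidirect product structure of isometry groups for this step; what it costs is two extra ingredients that should be justified explicitly: (a) the ``bridge'' that $\Isom^\uparrow(M,h)$ is closed in $\Conf^\uparrow(M,g)$, which is true since convergence in the Lie group topology implies $C^\infty_{\mathrm{loc}}$ convergence of the maps and $\phi^* h = h$ is a closed condition, but deserves a line; and (b) the growth estimate for the commuting product, which can be made rigorous via $\Vert E^i P^i H^i \Vert \geq \Vert H^i \Vert / \left( \Vert E^{-i} \Vert \, \Vert P^{-i} \Vert \right)$, noting that $\Vert H^i \Vert$ grows exponentially when $H \neq 1$, $\Vert P^i \Vert$ grows polynomially when $P \neq 1$, and the elliptic factor has uniformly bounded powers and inverse powers. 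The ``moreover'' statement is derived identically in both arguments, from exhaustiveness of Theorem~\ref{thm:classification}.
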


\begin{proof}
 Let $\phi \in \Conf^\uparrow(\hEins^{1,n})$. By Corollary~\ref{cor:escap}, if $\phi$ is escaping, it is inessential. By Lemma~\ref{lem:ellincover}, if $\phi$ is non-escaping elliptic, then $\phi$ is an element in a subgroup $H$ isomorphic to $\OG(n+1)$. Hence $H$ is compact and acts properly, so it is inessential.

 Conversely, suppose that $\phi$ is inessential, so that $\phi \in \Isom^\uparrow(M,g)$ for $(M,g)$ conformally equivalent to $\Eins^{1,n}$. Thus $(M,g)$ is causal and satisfies the NOH, and by \cite[Thm.~1]{GaZe}, $\Isom^\uparrow(M,g) = L \ltimes N$, where $N$ preserves a time function, and elements with non-trivial projection on $L$ are escaping. Hence $\phi$ is either escaping, or it preserves a time function. In the latter case, by Lemma~\ref{lem:ellincover}, $\phi$ must be elliptic.

 Finally, by Theorem~\ref{thm:classification}, if $\phi$ is non-elliptic and non-escaping (hence essential), then $\phi$ must have a fixed point.
\end{proof}

\begin{remark}
 Essentiality in $\hEins^{1,n}$ is different from that in ${\Eins^{1,n}}$. There are inessential elements $\phi \in \hOG(2,n+1)$ such that the projection $\pr \phi \in \OG(2,n+1)$ is essential. For instance, certain escaping elements.
\end{remark}

We have proven that essential conformal transformations of $\hEins^{1,n}$ have fixed points, and that conformal transformations of NOH spacetimes are non-escaping. Given this, the following result provides further evidence for Conjectures~\ref{conj:Lich1} and \ref{conj:Lich2}.

\begin{proposition} \label{prop:achronalset}
 Let $(M,g)$ be a causal spacetime satisfying the NOH, and let $\phi \in \Conf^\uparrow(M,g)$ be non-escaping. Then $\phi$ preserves a non-empty achronal set $S \in M$.
\end{proposition}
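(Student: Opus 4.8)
The plan is to produce the invariant achronal set either as a limit set of a single orbit or, failing that, as a finite orbit of a fixed point of some power of $\phi$. Fix any $p \in M$. Since $\phi$ is non-escaping, the full orbit $\{\phi^j(p) : j \in \Z\}$ is relatively compact by Theorem~\ref{thm:escap}, so I would first consider the forward limit set
\[
 \Omega := \bigcap_{m \geq 0} \overline{\{\phi^j(p) : j \geq m\}}.
\]
This $\Omega$ is non-empty and compact, and it is $\phi$-invariant: if $\phi^{j_k}(p) \to r$ with $j_k \to \infty$, then $\phi^{j_k \pm 1}(p) \to \phi^{\pm 1}(r)$ by continuity, whence $\phi(\Omega) = \Omega$. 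The whole argument then splits according to whether $\Omega$ is achronal. If it is, we are immediately done with $S := \Omega$.

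So the substance is the case where $\Omega$ is \emph{not} achronal, i.e.\ there are $r,s \in \Omega$ with $r \ll s$. First I would promote this to a chronological relation inside a single orbit: because $\ll$ is open and both $r$ and $s$ are accumulated by the forward orbit, I can pick indices $j < l$ with $\phi^j(p) \ll \phi^l(p)$; applying the map $\phi^{-j}$, which preserves $\ll$ by Theorem~\ref{thm:Haw-Mal}, yields $p \ll \phi^N(p)$ with $N := l-j > 0$. Writing $\psi := \phi^N$ (still non-escaping), the points $p_k := \psi^k(p)$ form a future chronological chain $p_0 \ll p_1 \ll \cdots$ contained in a compact set. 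The key step is that this chain must converge: this is exactly the alternative analyzed at the end of the proof of Theorem~\ref{thm:escap}, where a future-directed chronological chain in a globally hyperbolic spacetime either has a future endpoint or is inextendible and leaves every compact set; non-escaping rules out the second possibility, so $p_k \to p_\infty$, and continuity forces $\psi(p_\infty) = p_\infty$. (Concretely, any two subsequential limits $a,b$ can be interleaved along the chain to give $a \leq b \leq a$, using that $\leq$ is closed by global hyperbolicity, hence $a=b$ by causality.) Thus $p_\infty$ is a fixed point of $\phi^N$, and I would finish by taking $S := O$, the $\phi$-orbit $O := \{\phi^i(p_\infty) : i \in \Z\}$, which is finite and $\phi$-invariant since $\phi^N(p_\infty) = p_\infty$. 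To see $O$ is achronal: if $\phi^i(p_\infty) \ll \phi^j(p_\infty)$ for some $i \neq j$, then applying a suitable power of $\phi$ gives $p_\infty \ll \phi^m(p_\infty)$ with $0 < m < N$, and iterating (using $\phi^{Nm}(p_\infty) = p_\infty$ and transitivity of $\ll$) produces $p_\infty \ll p_\infty$, contradicting causality.

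The main obstacle is precisely this non-achronal case: the delicate point is converting the soft statement ``two limit points are chronologically related'' into a genuine fixed point, which is where global hyperbolicity (via Lemma~\ref{lem:NOHCauchy}) together with the non-escaping hypothesis is indispensable—without boundedness of orbits the chain could run off to infinity, and without closedness of $\leq$ its limit need not be fixed. The remaining verifications (that $\Omega$ is a non-empty $\phi$-invariant compactum, and that the finite orbit of a fixed point of $\phi^N$ contains no chronological relation) are short and purely causal-combinatorial.
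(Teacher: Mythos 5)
Your proof is correct, but it takes a genuinely different route from the paper's. The paper's argument is a short, case-free construction: for any $p$, the set $V := \bigcup_{k \in \Z} I^-(\phi^k(p))$ is a union of past sets, hence a past set, so its boundary $\partial V$ is automatically achronal; $V$ is manifestly $\phi$-invariant, hence so is $\partial V$; and $\partial V \neq \emptyset$ because the non-escaping hypothesis confines the orbit to a slab $\tau^{-1}([a,b])$, so $V$ misses $\tau^{-1}((b,\infty))$. You instead split on whether the $\omega$-limit set $\Omega$ of the orbit is achronal, and in the negative case you extract, via openness of $\ll$, a chronological chain $p \ll \phi^N(p) \ll \phi^{2N}(p) \ll \cdots$ inside a compact set, which converges (your interleaving argument using closedness of $\leq$ under global hyperbolicity and antisymmetry from causality is sound, and is essentially the same mechanism the paper uses at the end of the proof of Theorem~\ref{thm:escap}) to a fixed point of $\phi^N$, whose finite $\phi$-orbit is achronal by your iteration argument. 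Note that having $p \ll \phi^N(p)$ for \emph{one} point does not contradict non-escaping, since condition (ii) of Theorem~\ref{thm:escap} requires it for all points, so your second case is genuinely non-vacuous. What your route buys is a stronger dynamical conclusion in that case: a non-escaping transformation whose orbit limit sets fail to be achronal must have a periodic point. What the paper's route buys is uniformity and a better set: $\partial V$ is an achronal \emph{boundary} (the boundary of a past set), and it is this extra structure, not mere achronality, that underlies the remark immediately following the proposition, namely that $S$ is met by every inextendible causal curve and $M = I^-(S) \cup S \cup I^+(S)$. Your sets (a limit set, or a finite orbit, which a causal curve can easily avoid) satisfy the proposition as stated but would not support that subsequent remark.
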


\begin{proof}
 Let $p \in M$ be any point, and let $p_k := \phi^k(p)$. Consider the set $V = \bigcup_{k \in \Z} I^-(p_k)$. Being a union of past-sets, also $V$ is a past-set, meaning $V = I^-(V)$. It follows that $\partial V$ is achronal. By construction, $V$, and hence $\partial V$, are invariant under $\phi$. It remains to show that $\partial V \neq \emptyset$; equivalently, that $V \neq M$. Let $\Sigma \in M$ be a (compact) Cauchy surface, and consider the associated orthogonal splitting $M = \R \times \Sigma$. Because $\phi$ is non-escaping, there is a compact set $K = [a,b] \times \Sigma$ such that $p_k \in K$ for all $k \in \Z$. Therefore, $(b,\infty) \times \Sigma \cap V = \emptyset$, and we are done.
\end{proof}

Note that, by the NOH, $S$ is intersected at least once by every inextendible causal curve (exactly once if it is timelike). In particular,
\begin{equation*}
 M = I^-(S) \cup S \cup I^+(S).
\end{equation*}

\subsection{Classification of conformally flat spacetimes}

If Conjecture \ref{conj:Lich1} is true, then Conjecture \ref{conj:Lich2} reduces to classifying certain conformally flat spacetimes. We discuss here some partial answers based on existing literature, but adapted to our context. This requires passing to the universal cover, so we first need to understand what the NOH on a spacetime implies about its universal cover. Recall the following definition from \cite{Ros}.

\begin{definition}
 A \emph{conformal Cauchy embedding} of $M$ into $N$ (both $M$ and $N$ being globally hyperbolic spacetimes) is a conformal embedding $\iota \colon M \to N$ sending every Cauchy surface of $M$ to a Cauchy surface of $N$. A spacetime $M$ is called \emph{C-maximal} if every Cauchy extension is surjective.
\end{definition}

\begin{lemma} \label{lem:NOHcover}
 Let $M$ be a conformally flat causal spacetime satisfying the NOH, and $\tilde{M}$ its universal cover. Then both $M$ and $\tilde{M}$ are C-maximal.
\end{lemma}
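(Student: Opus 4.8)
The plan is to prove C-maximality by contradiction, exploiting that the NOH forces the future and past causal boundaries of $M$ to each consist of a single point, so that there is ``no room'' to attach a proper conformal Cauchy extension. The argument for $M$ will only use the causal structure (global hyperbolicity plus NOH), not conformal flatness; the latter hypothesis matters for the surrounding results, and for the passage to $\tilde{M}$.

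\textbf{C-maximality of $M$.} Let $\iota\colon M\to N$ be a conformal Cauchy embedding; since $\iota$ is a map between equidimensional manifolds it is open, so $\iota(M)$ is open in $N$, and $N$ is globally hyperbolic because it contains the Cauchy surface $\iota(\Sigma_0)$ (Theorem~\ref{thm:GerBeSa}). Suppose $\iota$ is not surjective and pick $q\in N\setminus\iota(M)$. As $q\notin\iota(\Sigma_0)$, it lies in $I^+_N(\iota(\Sigma_0))$ or $I^-_N(\iota(\Sigma_0))$; by the time-reversal symmetry of the hypotheses assume the former. Using Lemma~\ref{lem:NOHCauchy} repeatedly, I would first produce Cauchy surfaces $\Sigma_{t_k}$ with $t_k\to+\infty$ whose consecutive members are totally timelike connected (that the $t_k$ tend to $+\infty$, rather than accumulating, follows because the causal relation is closed and $M$ is causal, so a finite accumulation would make two distinct points of a single limit surface mutually causally related). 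Because $\iota$ preserves $\ll$ and sends Cauchy surfaces to Cauchy surfaces, the $\iota(\Sigma_{t_k})$ are totally-timelike-connected Cauchy surfaces of $N$ marching to the future.

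The heart of the argument is to show that the open past set $P:=\iota^{-1}(I^-_N(q))\subseteq M$ equals all of $M$. One checks, using total timelike connectedness, that $q\in I^+_N(\iota(\Sigma_{t_k}))$ for every $k$, so $P$ meets $\Sigma_{t_k}$ for all $k$; and a past set of $M$ that meets Cauchy surfaces arbitrarily far to the future must, by the nested totally-timelike-connected structure of Lemma~\ref{lem:NOHCauchy} (for each $y\in M$ one finds a later surface all of whose points lie to the future of $y$), coincide with $M$. Hence $\iota(M)\subseteq I^-_N(q)$, i.e.\ $q$ lies to the future of the entire image, and $q\in\partial\iota(M)$. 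Now global hyperbolicity of $N$ makes the causal diamond $J^+_N(\iota(\Sigma_0))\cap J^-_N(q)$ compact; yet this diamond contains $\iota(\Sigma_{t_k})$ for every $k$, confining all these marching Cauchy surfaces to one compact set. Tracking a future-inextendible causal curve of $N$ across the $\iota(\Sigma_{t_k})$ then yields an accumulation point playing the role of a future endpoint for the image of an inextendible causal curve of $M$; feeding in two inextendible causal curves of $M$ with distinct such endpoints contradicts the NOH statement $I^-(\gamma)=M$ for every inextendible causal curve $\gamma$, i.e.\ that the future causal boundary of $M$ is a single point. The most delicate step is precisely this last one: converting ``extra points to the future of $\iota(M)$'' into ``extra points of the future causal boundary'' requires controlling how a conformal Cauchy extension interacts with the causal-boundary construction, and this causal-boundary rigidity (which may instead be quoted from \cite{Ros}) — not the causality bookkeeping — is the real obstacle.

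\textbf{C-maximality of $\tilde{M}$.} Note that $\tilde{M}$ need not satisfy the NOH (for instance $\R^{1,n}$ covers the NOH spacetime $\R\times T^n$), so it must be handled by descent rather than by repeating the first part. Let $\tilde{\iota}\colon\tilde{M}\to\tilde{N}$ be a conformal Cauchy embedding. The deck group $\Gamma:=\Deck(\tilde{M}/M)$ acts on $\tilde{M}$ by conformal transformations permuting its Cauchy surfaces; by uniqueness of conformal Cauchy extensions (cf.\ \cite{Ros}) each deck transformation extends to a conformal automorphism of $\tilde{N}$, giving a $\Gamma$-action on $\tilde{N}$. Granting that this extended action stays free and properly discontinuous, the quotient $N:=\tilde{N}/\Gamma$ is a globally hyperbolic spacetime and $\tilde{\iota}$ descends to a conformal Cauchy embedding $M\to N$; since $M$ is C-maximal by the first part, this embedding is surjective, which pulls back to show $\tilde{\iota}$ is surjective. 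The main obstacle here is exactly the parenthetical hypothesis: verifying that the extended $\Gamma$-action on $\tilde{N}$ remains free and properly discontinuous, since a priori the added boundary points of $\tilde{N}$ could be fixed by nontrivial deck transformations (as spatial infinity of Minkowski is fixed by translations) or could destroy discreteness. Resolving this again amounts to showing that the added points sit at the single-point causal boundary and are therefore too few to support such degeneracies, or alternatively to invoking a ``C-maximality passes to coverings'' statement from \cite{Ros} to bypass the quotient construction entirely.
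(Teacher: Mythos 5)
Your first half proceeds correctly up to the point where you have $\iota(M)\subset I^-_N(q)$ and all the surfaces $\iota(\Sigma_{t_k})$ trapped in the compact diamond $J^+_N(\iota(\Sigma_0))\cap J^-_N(q)$ (modulo causal convexity of the image, which the paper also asserts without proof). But the step you yourself flag as ``the real obstacle'' is exactly the mathematical content of the lemma, and the mechanism you sketch for it would fail. You propose to produce two inextendible causal curves of $M$ whose images have distinct future endpoints $e_1\neq e_2$ in $N$ and to declare this incompatible with the one-point future causal boundary. Two problems: (a) distinctness of $e_1,e_2$ as points of $N$ does not by itself give distinct points of $\CB^+(M)$, because the TIPs they induce are the pullbacks of $I^-_N(e_i)\cap\iota(M)$, and the difference between $I^-_N(e_1)$ and $I^-_N(e_2)$ may lie entirely outside $\iota(M)$; (b) worse, under NOH such a pair cannot exist: NOH gives $I^-_M(\gamma)=M$ for each curve, hence $\iota(M)\subset I^-_N(e_i)$ for every such endpoint, and then $e_1\in\overline{\iota(M)}\subset\overline{I^-_N(e_2)}=J^-_N(e_2)$ (causal simplicity of the globally hyperbolic $N$) and symmetrically $e_2\leq_N e_1$, so $e_1=e_2$ by causality of $N$. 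So ``feeding in two curves with distinct endpoints'' is vacuous, and the contradiction must be organized differently. The paper's route: write $\iota(M)=\{(t,x)\mid f^-(x)<t<f^+(x)\}$ in a splitting of $N$, take an open piece $U$ of the graph of $f^+$ inside $\partial\iota(M)$, and prove that $b\colon U\to\CB^+(M)$, $p\mapsto I^-_N(p)\cap\iota(M)$, is injective --- for $p\nleq q$ there are points of $\iota(M)$ vertically just below $p$ lying in $I^-_N(p)\setminus I^-_N(q)$, again by $\overline{I^-_N(q)}=J^-_N(q)$ --- which yields infinitely many TIPs of $M$ and contradicts NOH directly. (Alternatively, your route can be completed by showing that the forced coincidence of \emph{all} endpoints at a single $e$ makes the Cauchy surfaces $\iota(\Sigma_{t_k})$ of $N$ collapse onto $\{e\}$, which is impossible: an inextendible causal curve of $N$ through a point spacelike to $e$ must cross every $\iota(\Sigma_{t_k})$, hence accumulate at $e$, contradicting strong causality.) Deferring this step to ``may instead be quoted from \cite{Ros}'' leaves a genuine hole at the heart of the proof.

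For the universal cover, your observation that $\tilde{M}$ need not satisfy the NOH is correct and is precisely why $\tilde{M}$ requires separate treatment. But your descent construction is incomplete exactly where you concede it is: extending the deck action to the extension $\tilde{N}$ requires a uniqueness theory for conformal Cauchy extensions, and freeness and proper discontinuity of the extended action at the added points are unverified (and are the kind of statements that genuinely can fail at boundary points). The paper does not attempt any of this; it simply invokes Sma\"i's theorem that C-maximality passes to the universal cover, \cite[Cor.~6]{SmaiAIP}, together with the equivalence of C-maximality with $\mathcal{C}_0$-maximality, \cite[Thm.~1.2]{SmaiJMP}. Your fallback suggestion of quoting a ``C-maximality passes to coverings'' statement is the right move, but in your write-up it remains an unexecuted alternative, attributed to Rossi \cite{Ros} rather than to Sma\"i, so this half also rests on a citation you neither supply nor correctly locate.
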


\begin{proof}
 By Lemma \ref{lem:NOHCauchy}, $M$ is globally hyperbolic, hence diffeomorphic to $\R \times \Sigma$ for $\Sigma$ a compact Cauchy surface. Let $\iota \colon M \to N$ be a conformal Cauchy embedding, and $V := \iota(M)$. Necessarily, $V$ is causally convex in $N$ and contains a Cauchy surface of $N$. Also $N$ is diffeomorphic to $\R \times \Sigma$, and there are continuous functions $f^\pm \colon \Sigma \to \R \cup \{ \pm \infty \}$ such that
 \begin{equation*}
  V = \{(t,x) \mid f^-(x) < t < f^+(x) \}.
 \end{equation*}
  If $\iota$ is not surjective, then the boundary $\partial V \subset N$ is non-empty, so without loss of generality, there is $x_0 \in \Sigma$ such that $f^+(x_0) < +\infty$. Hence there is a neighborhood $U \subset \partial V$ of $x_0$ such that $f^+ \vert_U < +\infty$. We use this to construct a map
 \begin{equation*}
  b \colon U \longrightarrow \CB^+(M),\quad p \longmapsto I^-(p).
 \end{equation*}
 Note that $I^-(p)$ is a TIP in $V$, because a causal curve with future endpoint $p \in \partial V$ is inextendible in $V$. Because $N$ is globally hyperbolic, it is distinguishing, so $I^-(p) \neq I^-(q)$ for $p \neq q$, implying that $b$ is injective. It follows that $\CB^+(M)$ has infinitely many points, in contradiction to the NOH. Hence $V = N$, so $\iota$ is surjective and $M$ is C-maximal. Finally, by \cite[Cor.~6]{SmaiAIP}, also $\tilde{M}$ is C-maximal (note that C-maximality is equivalent to the property called there $\mathcal{C}_0$-maximality \cite[Thm.~1.2]{SmaiJMP}).
\end{proof}

We now apply a result of Rossi \cite{Ros} to obtain a classification in the case that $\tilde{M}$ has compact Cauchy surfaces, which occurs when the fundamental group of $M$ is finite. The case of infinite fundamental group (implying that $\tilde{M}$ has non-compact Cauchy surfaces) is more complicated, and has been investigated by Sma\"i \cite{SmaiJMP}, but a comparable classification is not yet available.

\begin{theorem} \label{thm:finitepi1}
 Let $(M,g)$ be a causal, conformally flat spacetime satisfying the NOH. Suppose that $\mathrm{dim}(M) = n+1 \geq 3$ and that $M$ has finite fundamental group. Then, up to conformal transformation, $(M,g)$ is a finite quotient of $\hEins^{1,n}$. If $\Conf^\uparrow(M,g)$ contains an essential conformal transformation, then $(M,g)$ is conformally diffeomorphic to $\hEins^{1,n}$.
\end{theorem}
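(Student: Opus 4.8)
The plan is to combine Rossi's classification of conformally flat $C$-maximal spacetimes with a linear-algebra argument inside $\hOG^\uparrow(2,n+1)$: first establish the quotient description, then exclude nontrivial deck transformations once an essential element is present.

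\emph{Reduction to a quotient.} Since $\pi_1(M)$ is finite, the universal cover $\tilde M \to M$ is a finite covering; as the Cauchy surfaces of $\tilde M$ cover the compact Cauchy surface of $M$, they are compact, so $\tilde M$ is globally hyperbolic with compact Cauchy surfaces, conformally flat, simply connected, and $C$-maximal (Lemma~\ref{lem:NOHcover}). Rossi's theorem~\cite{Ros} then identifies $\tilde M$ conformally with $\hEins^{1,n}$ (here $n\geq 2$ is exactly what makes $\hEins^{1,n}$ simply connected). Consequently $M$ is conformally diffeomorphic to $\hEins^{1,n}/\Gamma$, where $\Gamma = \Deck(\tilde M \to M)\cong \pi_1(M)$ is a finite subgroup of $\hOG^\uparrow(2,n+1)$ acting freely. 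This gives the first assertion.

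\emph{The lift of an essential element.} Suppose now $\phi\in\Conf^\uparrow(M,g)$ is essential, and fix a lift $\tilde\phi\in\hOG^\uparrow(2,n+1)$; being a lift, $\tilde\phi$ normalizes $\Gamma$. I claim $\tilde\phi$ is non-escaping and non-elliptic. Escaping is detected by unboundedness of orbits and the covering map is proper, so $\phi$ is escaping if and only if $\tilde\phi$ is; since escaping elements are inessential (Corollary~\ref{cor:escap}), $\phi$ essential forces $\tilde\phi$ non-escaping. If $\tilde\phi$ were (non-escaping) elliptic, then by Lemma~\ref{lem:ellincover} the group $\overline{\langle\tilde\phi\rangle}$ is compact; as $\Gamma$ is finite and normalized by $\tilde\phi$, the product $\overline{\langle\tilde\phi\rangle}\cdot\Gamma$ is compact and descends to a compact---hence proper---subgroup of $\Conf^\uparrow(M,g)$ containing $\phi$, making $\phi$ inessential by Proposition~\ref{prop:inessential}, a contradiction. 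Thus $\tilde\phi$ is non-elliptic, so its Jordan decomposition $\pr\tilde\phi = EPH$ (Proposition~\ref{prop:JordanO}) has $PH\neq 1$.

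\emph{Triviality of $\Gamma$.} Assume toward a contradiction that $\gamma\in\Gamma$ is nontrivial. It has finite order, hence is non-escaping and elliptic, and it acts freely; by Lemma~\ref{lem:ellincover} I may conjugate (simultaneously adjusting $\tilde\phi$, which preserves being non-escaping and non-elliptic) so that $\gamma\colon (t,x)\mapsto (t,Sx)$ with $S\in\OG(n+1)$, freeness forcing $S$ to have no eigenvalue $1$. Since $\Gamma$ is finite and normalized by $\tilde\phi$, some power $\tilde\phi^m$ centralizes $\gamma$; then $\pr\tilde\phi^{m}=E^mP^mH^m$ commutes with $\bar\gamma:=\Id_{\R^2}\oplus S$, and therefore so do the Jordan factors $H^m,P^m$, being polynomials in $\pr\tilde\phi^m$. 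But $\bar\gamma$ is semisimple, with $1$-eigenspace the negative-definite plane $\R^2$ and the remaining eigenspaces spanning the positive-definite $\R^{n+1}$; both are definite and invariant under $H^m$ and $P^m$. A hyperbolic or unipotent transformation that restricts to a compact orthogonal group must be the identity there, so $H^m=P^m=1$, whence $H=P=1$, contradicting $PH\neq1$. Hence $\Gamma$ is trivial and $M$ is conformally diffeomorphic to $\hEins^{1,n}$.

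\emph{Main obstacle.} The delicate point is the transfer of essentiality across the finite cover. Essentiality is defined through the nonexistence of an invariant conformal factor and does not pass formally to covers or quotients, so it must be routed through the properness criterion (Proposition~\ref{prop:inessential}) together with the compatibility of properness with the finite quotient $\hEins^{1,n}\to M$ and with compact extensions by $\Gamma$; this is what lets me conclude that the lift $\tilde\phi$ is non-elliptic. (The first step also rests essentially on the external input of Rossi's classification, whose hypotheses---simple connectivity, conformal flatness, compact Cauchy surfaces, $C$-maximality---must all be checked.) Once non-ellipticity of $\tilde\phi$ is secured, the concluding eigenspace computation is routine.
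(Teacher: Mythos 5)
Your proposal is correct, and its first half (finite cover, compact Cauchy surfaces of $\tilde M$, $C$-maximality via Lemma~\ref{lem:NOHcover}, then Rossi's theorem) coincides with the paper's proof. The second half shares the paper's key ingredients --- the centralizing power $\tilde\phi^m$ obtained from finiteness of $\Aut(\Gamma)$, and the normal form $(t,x)\mapsto(t,Sx)$ with $S$ fixed-point free for a nontrivial deck transformation $\gamma$ --- but you organize the contradiction genuinely differently. The paper converts essentiality of $\phi$ into non-properness of $G=\overline{\langle\phi\rangle}$ on $M$ (Proposition~\ref{prop:inessential}) and pushes non-properness \emph{up} the finite cover via $G_{\tilde K}=G_K$; its contradiction is then dynamical: commutation with $\gamma$ forces $\pr\tilde\phi^m$ into a conjugate of $\OG(2)\times\OG(n+1)$, so $\tilde\phi^m$ is elliptic, so $\overline{\langle\tilde\phi^m\rangle}$ acts properly (Remark~\ref{rem:ellproper}), and Lemma~\ref{lem:powers} then makes $G$ act properly, which is absurd. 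You instead record essentiality as ``the lift is non-escaping and non-elliptic'', pushing compactness \emph{down} the cover: if $\tilde\phi$ were non-escaping elliptic, the compact group $\overline{\langle\tilde\phi\rangle}\cdot\Gamma$ (a group because $\overline{\langle\tilde\phi\rangle}$ normalizes the finite, hence closed, $\Gamma$) would descend to a compact, properly acting, closed subgroup of $\Conf^\uparrow(M,g)$ containing $\phi$, contradicting Proposition~\ref{prop:inessential}. Your endgame is then purely algebraic rather than dynamical: $H^m$ and $P^m$, being the Jordan factors of $(\pr\tilde\phi)^m$ and hence polynomials in it, commute with $\bar\gamma$, therefore preserve the two definite subspaces $\R^{2,0}$ and $\R^{0,n+1}$ (the $1$-eigenspace of $\bar\gamma$ and the sum of the remaining primary components, which is where freeness of $\gamma$ enters), and a hyperbolic or unipotent map preserving a definite form is trivial; so $H^m=P^m=1$, whence $H=P=1$ and $\tilde\phi$ itself --- not merely a power --- is elliptic, contradicting non-ellipticity directly without invoking Remark~\ref{rem:ellproper} or Lemma~\ref{lem:powers} at this stage. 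The trade-off is clear: your route requires the extra compact-group descent argument up front (including continuity of the descent homomorphism and the transfer of non-escaping across the proper covering map), while the paper's route keeps the linear algebra lighter but needs the properness bookkeeping at the end; both rest on the same external inputs and both are complete.
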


\begin{proof}
 The universal cover $\tilde{M}$ is globally hyperbolic, and by Lemma \ref{lem:NOHcover}, it is C-maximal. We show that $\tilde{M}$ has compact Cauchy surfaces. Then, by \cite[Thm.~9]{Ros}, $\tilde{M}$ is conformally diffeomorphic to $\hEins^{1,n}$, and $M$ is a finite quotient thereof. To see that $\tilde{M}$ has compact Cauchy surfaces, note that $\tilde{M}$ is diffeomorphic to $\R \times \tilde\Sigma$, where $\tilde\Sigma$ is the universal cover of a Cauchy surface $\Sigma \subset M$. Moreover, $\tilde\Sigma$ must have finite fundamental group, so the cover $\tilde\Sigma \to \Sigma$ is finite, and hence $\tilde\Sigma$ is compact.

 It remains to show that if $\Conf^\uparrow(M,g)$ contains an essential element $\phi$, then the cover $\tilde{M} \to M$ is trivial. Let $G := \overline{\langle \phi \rangle}$ denote the closure of the subgroup generated by $\phi$. By uniqueness of the universal cover, $\phi$ lifts to a conformal diffeomorphism $\tilde{\phi} \colon \tilde{M} \to \tilde{M}$, so there is also a $G$-action on $\tilde M$. By Proposition~\ref{prop:inessential}, if $\phi$ acts essentially on $(M,g)$, then $G$ acts non-properly. Hence there is a compact $K \subset M$ such that the set
 \begin{equation*}
  G_{K} := \{ \psi \in G \mid {K} \cap \psi({K}) \neq \emptyset \}
 \end{equation*}
 is non-compact. Because the covering $\pi \colon \tilde{M} \to M$ is finite, $\tilde{K}  := \pi^{-1}(K)$ is compact in $\tilde{M}$. It is then easy to see that $G_{\tilde{K}} = G_K$, and thus $G$ acts non-properly on $\tilde{M}$.

 The rest of the proof consists in arguing that, if the action of $\tilde{\phi}$ is to descend to a finite (non-trivial) quotient of $\tilde{M}$, then $G$ must act properly, in contradiction to the previous paragraph. Suppose thus that $M = \tilde{M} / \Gamma$ for $\Gamma \subset \Conf^\uparrow(\tilde{M},\tilde{g})$ a non-trivial finite subgroup. The map
 \begin{align*}
  c \colon \Gamma &\longrightarrow \Conf^\uparrow(\tilde{M},\tilde{g}) \\ \gamma &\longmapsto \tilde\phi \circ \gamma \circ \tilde\phi^{-1}
 \end{align*}
 defines an automorphism of $\Gamma$. Because $\Gamma$ is finite, so is $\Aut(\Gamma)$, and hence there is a $m \in \N$ such that $c^m = 1$ is the identity. This means that $\tilde\phi^m$ centralizes $\Gamma$ (i.e.\ commutes with all $\gamma \in \Gamma$).

 Since $\tilde{M}$ is conformal to $\hEins^{1,n}$, we have $\Conf^\uparrow(\tilde{M},\tilde{g}) = \hOG(2,n+1)$. Let $\gamma \in \Gamma$ be a non-trivial element. Because $\Gamma$ is a finite (hence compact) subgroup, $\gamma$ is elliptic. Moreover, since $M = \tilde{M} / \Gamma$ is a non-compact manifold, $\gamma$ must be non-escaping and without fixed points. Hence, the projection of $\gamma$ to $\OG(2,n+1)$ must be conjugate to an element of $\OG(n+1)$. Then, in an appropriate basis, $\gamma$ preserves the splitting $\R^{2,n+1} = \R^{2,0} \oplus \R^{0,n+1}$, acting trivially on $\R^{2,0}$ and without fixed points on $\R^{0,n+1}$. Since, by the previous paragraph, $\tilde\phi^m$ commutes with $\gamma$, also $\tilde\phi^m$ must preserve the splitting. Hence the projection of $\tilde\phi^m$ to $\OG(2,n+1)$ is conjugate to an element of $\OG(2) \times \OG(n+1)$. It follows that $\tilde\phi^m$ is elliptic, implying that $\overline{\langle \tilde{\phi}^m \rangle}$ acts properly (see Remark~\ref{rem:ellproper}). Then, by Lemma~\ref{lem:powers}, also $G$ acts properly. This contradicts the previous discussion, so we conclude that $\Gamma = \{e\}$, and $M$ is conformal to $\hEins^{1,n}$.
 \end{proof}

\begin{remark} \label{rem:orbi}
 In the last part of the proof of Theorem~\ref{thm:finitepi1}, it is crucial that the finite group $\Gamma$ acts freely (without fixed points) on $\tilde{M}$. If we allow non-free actions, then the quotient is no longer a manifold, but an orbifold. In that case, the conclusion really fails. For example, in Riemannian signature, the quotient of a sphere by a periodic rotation is such an orbifold. There, the flow from north to south pole along the meridians is an essential transformation, which descends to the quotient by a periodic rotation, and acts essentially also on this quotient orbifold.
\end{remark}

\bibliographystyle{abbrv}
\bibliography{refs.bib}

\end{document}